%% file: anabfam.tex
\documentclass[11pt, a4paper]{amsart}
\input{styles/style}
\input{styles/macros}
\title[Anabelian Families]{Anabelian Geometry in Families}

\author{Tim Holzschuh}
\address{Tim Holzschuh, Institut des Hautes \'Etudes Scientifiques (IHES)\\ \newline \indent
Le Bois Marie\\
35 route de Chartres\\
91440 Bures-sur-Yvette\\
France}
\email{holzschuh@ihes.fr}

\author{Alexander Schmidt}
\address{Alexander Schmidt, Institut f\"{u}r Mathematik, Universit\"at Heidelberg \\ \newline \indent
Im Neuenheimer Feld 205, 69120 Heidelberg, Germany}
\email{schmidt@mathi.uni-heidelberg.de}

\author{Jakob Stix}
\address{Jakob Stix, Institut f\"{u}r Mathematik, Goethe-Universit\"{a}t Frankfurt \\ \newline \indent
Robert-Mayer-Stra{\ss}e~{6--8},
60325 Frankfurt am Main, Germany}
\email{stix@math.uni-frankfurt.de}

\date{\today}

\begin{document}

\hrule width\hsize

\vskip 0.8cm

\begin{abstract}
Anabelian geometry as an attempt to describe geometry in terms of étale topological data has addressed so far mainly categories of varieties over a field.
In this paper we work over a normal base scheme $S$ of finite type over a sub-$p$-adic field  and show that families of hyperbolic curves over $S$ are anabelian among smooth $S$-schemes with respect to dominant morphisms.
\end{abstract}

\maketitle

\setcounter{tocdepth}{1}
{\scriptsize \tableofcontents}

%%%%%%%%%%%%%%%%%%%%%%%%%%%%%%%%%%%%%%%%%%%%%%%%%%%%%%%
\section{Introduction}
\label{sec:introduction}
\input{content/introduction.tex}

%%%%%%%%%%%%%%%%%%%%%%%%%%%%%%%%%%%%%%%%%%%%%%%%%%%%%%%%
\section{Profinite anima and the étale homotopy type}
\label{sec:profinite-anima}
\input{content/profinite-anima.tex}

%%%%%%%%%%%%%%%%%%%%%%%%%%%%%%%%%%%%%%%%%%%%%%%%%%%%%%%%
\section{\texorpdfstring{Schemes of type  $\K(\pi,1)$}{Schemes of type K(π,1)}} 
\label{sec:BG}  
\input{content/K-pi-1-schemes.tex}

%%%%%%%%%%%%%%%%%%%%%%%%%%%%%%%%%%%%%%%%%%%%%%%%%%%%%%
\section{Anabelian geometry of hyperbolic curves revisited}
\label{sec:mochizuki}
\input{content/mochizuki-anima-version.tex}

%%%%%%%%%%%%%%%%%%%%%%%%%%%%%%%%%%%%%%%%%%%%%%%%%%%%%%%
\section{Quasifibrations, base change and homotopy pullbacks}
\label{sec:basechange}
\input{content/basechange.tex}

%%%%%%%%%%%%%%%%%%%%%%%%%%%%%%%%%%%%%%%%%%%%%%%%%%%%%%%
\section{Spreading out homotopies}
\label{sec:spreading-out-homotopies}
\input{content/spreading-out-homotopies.tex}

%%%%%%%%%%%%%%%%%%%%%%%%%%%%%%%%%%%%%%%%%%%%%%%%%%%%%%%
\section{Anabelian criterion for extending maps to curves}
\label{sec:extending-curves}
\input{content/anabelian-criterion-for-extending-curves.tex}

%%%%%%%%%%%%%%%%%%%%%%%%%%%%%%%%%%%%%%%%%%%%%%%%%%%%%%%
\section{Rigidity in profinite étale homotopy of hyperbolic curves}
\label{sec:finalize-proof}
\input{content/finalize-proof.tex}

%%%%%%%%%%%%%%%%%%%%%%%%%%%%%%%%%%%%%%%%%%%%%%%%%%%%%%%
\appendix
\input{content/appendix.tex}

%%%%%%%%%%%%%%%%%%%%%%%%%%%%%%%%%%%%%%%%%%%%%%%%%%%%%%%
\printbibliography
%%%%%%%%%%%%%%%%%%%%%%%%%%%%%%%%%%%%%%%%%%%%%%%%%%%%%%%

\end{document}

%%% Local Variables:
%%% mode: LaTeX
%%% TeX-master: t
%%% End:

%% file: styles/style.tex
\usepackage[T1]{fontenc}
\usepackage{stix2}

\usepackage{geometry}
\usepackage{microtype} % Subtle kerning, spacing improvements

\DeclareRobustCommand{\SkipTocEntry}[5]{}

\usepackage{amsthm}
\usepackage{dsfont}
\usepackage{mathtools} % Fixes and extends amsmath (e.g., \coloneqq, \prescript)

\usepackage{tikz-cd}
\tikzcdset{arrow style=tikz, diagrams={>=stealth}} % Clean, modern arrows

\usetikzlibrary{decorations.markings,decorations.pathmorphing}

\makeatletter
\tikzcdset{
  open/.code     = {\tikzcdset{hook, circled};},
  closed/.code   = {\tikzcdset{hook, slashed};},
  open'/.code    = {\tikzcdset{hook', circled};},
  closed'/.code  = {\tikzcdset{hook', slashed};},
 circled/.code  = {\tikzcdset{markwith = {\draw (0,0) circle (.375ex);}};},
 slashed/.code  = {\tikzcdset{markwith = {\draw[-] (-.4ex,-.4ex) -- (.4ex,.4ex);}};},
  markwith/.code ={
    \pgfutil@ifundefined%
    {tikz@library@decorations.markings@loaded}%
    {\pgfutil@packageerror{tikz-cd}{You need to say %
      \string\usetikzlibrary{decorations.markings} to use arrows with markings}{}}{}%
    \pgfkeysalso{/tikz/postaction = {
      /tikz/decorate,
      /tikz/decoration={markings, mark = at position 0.5 with {#1}}}
    }
  },
}
\makeatother
\usepackage{graphicx}
\usepackage{rotating} % for rotating e.g. isomorphism symbols in comm. diagrams
\usepackage[pdfpagelabels]{hyperref}
\hypersetup{
  pdftitle={Anabelian Geometry in Families},
  pdfauthor={Tim Holzschuh,  Alexander Schmidt, and Jakob Stix},
  pdfsubject={},
  pdfkeywords={},
  colorlinks=true,    % false: boxed links; true: colored links
  linkcolor=blue,     % color of internal links
  citecolor=blue,     % color of links to bibliography
  filecolor=blue,      % color of file links
  urlcolor=blue,       % color of external links
  breaklinks=true,
  bookmarksopen=true,
  bookmarksnumbered=true,
  pdfpagemode=UseOutlines,
  plainpages=false,
  unicode=true
  }
  
\numberwithin{equation}{subsection}

\usepackage{color}
\usepackage{soul}
\setuldepth{Berlin}

\usepackage{csquotes}

\usepackage[nameinlink, capitalise]{cleveref} % Must be loaded *after* amsart, amsthm, etc.

\creflabelformat{theorem}{#2\textup{#1}#3} % "Theorem 3.1"
\creflabelformat{lemma}{#2\textup{#1}#3}
\creflabelformat{equation}{#2(#1)#3} % "(3.1)"

\crefformat{section}{\S#2#1#3}
\crefformat{subsection}{\S#2#1#3}
\crefformat{subsubsection}{\S#2#1#3}

\crefname{thmABC}{Theorem}{Theorems}

\crefname{section}{\S\!\!}{\S\S\!\!}
\Crefname{section}{Section}{Sections}
\crefname{subsection}{\S\!\!}{\S\S\!\!}
\Crefname{subsection}{Subsection}{Subsections}

\crefname{conjecture}{Conjecture}{Conjectures}
\Crefname{conjecture}{Conjecture}{Conjectures}
\crefname{corollary}{Corollary}{Corollaries}
\Crefname{corollary}{Corollary}{Corollaries}
\crefname{definition}{Definition}{Definitions}
\Crefname{definition}{Definition}{Definitions}
\crefname{example}{Example}{Examples}
\Crefname{example}{Example}{Examples}
\crefname{lemma}{Lemma}{Lemmata}
\Crefname{lemma}{Lemma}{Lemmata}
\crefname{notation}{Notation}{Notations}
\Crefname{notation}{Notation}{Notations}
\crefname{proposition}{Proposition}{Propositions}
\Crefname{proposition}{Proposition}{Propositions}
\crefname{recollection}{Recollection}{Recollections}
\Crefname{recollection}{Recollection}{Recollections}
\crefname{remark}{Remark}{Remarks}
\Crefname{remark}{Remark}{Remarks}
\crefname{warning}{Warning}{Warnings}
\Crefname{warning}{Warning}{Warnings}

\AddToHook{env/lemma/begin}{\crefalias{theorem}{lemma}}
\AddToHook{env/remark/begin}{\crefalias{theorem}{remark}}
\AddToHook{env/corollary/begin}{\crefalias{theorem}{corollary}}
\AddToHook{env/definition/begin}{\crefalias{theorem}{definition}}
\AddToHook{env/proposition/begin}{\crefalias{theorem}{proposition}}
\AddToHook{env/example/begin}{\crefalias{theorem}{example}}
\AddToHook{env/recollection/begin}{\crefalias{theorem}{recollection}}
\AddToHook{env/notation/begin}{\crefalias{theorem}{notation}}
\AddToHook{env/warning/begin}{\crefalias{theorem}{warning}}
\AddToHook{env/construction/begin}{\crefalias{theorem}{construction}}

\newcommand{\lcref}[1]{\labelcref{#1}}

\newtheorem{theorem}{Theorem}[section]
\newtheorem{lemma}[theorem]{Lemma}
\newtheorem{proposition}[theorem]{Proposition}
\newtheorem{corollary}[theorem]{Corollary}

\newtheorem{thmABC}{Theorem}

\theoremstyle{definition}
\newtheorem{definition}[theorem]{Definition}

\newtheorem{remark}[theorem]{Remark}
\newtheorem{remarks}[theorem]{Remarks}
\newtheorem{recollection}[theorem]{Recollection}

\newtheorem{construction}[theorem]{Construction}

\newtheorem*{definition*}{Definition}
\newtheorem*{remark*}{Remark}

\usepackage[shortlabels]{enumitem}
\setlist{leftmargin=.8cm}

\makeatletter
\def\cref@thmoptarg[#1]#2#3#4{%
    \ifhmode\unskip\unskip\par\fi%
    \normalfont%
    \trivlist%
    \let\thmheadnl\relax%
    \let\thm@swap\@gobble%
    \thm@notefont{\fontseries\mddefault\upshape}%
    \thm@headpunct{.}% add period after heading
    \thm@headsep 5\p@ plus\p@ minus\p@\relax%
    \thm@space@setup%
    #2% style overrides
    \@topsep \thm@preskip               % used by thm head
    \@topsepadd \thm@postskip           % used by \@endparenv
    \def\@tempa{#3}\ifx\@empty\@tempa%
      \def\@tempa{\@oparg{\@begintheorem{#4}{}}[]}%
    \else%
      \refstepcounter[#1]{#3}%  <<< cleveref modification
      \@namedef{cref@#3@alias}{#1}% added
      \def\@tempa{\@oparg{\@begintheorem{#4}{\csname the#3\endcsname}}[]}%
    \fi%
    \@tempa}%
\makeatother

\usepackage[style=alphabetic, backend=biber, natbib=true,
    url=false,
    doi=false, %true,
    isbn=false,
    eprint=false, 
    maxalphanames=3, minalphanames=3,
    maxbibnames=99]{biblatex}
\DeclareFieldFormat[article, inbook, incollection, inproceedings, misc, thesis, unpublished]{title}{\textit{#1}}
\DeclareFieldFormat{journaltitle}{{\rmfamily #1}}
\DeclareFieldFormat{booktitle}{{\rmfamily #1}}
\DeclareFieldFormat{pages}{#1}
\DeclareFieldFormat{url}{%
  \url{#1}%
}
\renewrobustcmd*{\bibinitdelim}{\addnbthinspace{}}
\renewrobustcmd*{\bibnamedelima}{\addnbthinspace{}}
\renewrobustcmd*{\bibnamedelimd}{\addnbthinspace{}}

\DeclareBibliographyDriver{article}{%
  \usebibmacro{author/editor}%
  \newunit
  \usebibmacro{title}%
  \newunit
  \usebibmacro{journal}%
  \setunit*{\addspace}%
  \printtext{\textbf{\printfield{volume}}}%
  \setunit*{\addspace}%
  \printtext[parens]{\printfield{year}}%
  \setunit{\addcomma\space}%
  \iffieldundef{number}{}{\printtext{no.~\printfield{number}}}%
  \setunit{\addcomma\addspace}%
  \printfield{pages}%
  \iffieldundef{pubstate}{}{\addcomma\addspace\printfield{pubstate}%
  	 \iffieldundef{doi}{}{\addcomma\addspace\printfield{doi}}%
	 }%
  \usebibmacro{finentry}}

\newlist{deflist}{enumerate}{2}
\setlist[deflist, 1]{
  itemsep    = 0.2cm,
  label      = {\upshape (\alph*)},
  ref        = {(\alph*)},
  leftmargin = *
}
\setlist[deflist, 2]{
  itemsep    = 0.2cm,
  label      = {\upshape (\arabic*)},
  ref        = {(\arabic*)},
  leftmargin = *
}

\newlist{thmlist}{enumerate}{2}
\setlist[thmlist, 1]{
  itemsep    = 0.2cm,
  label      = {\upshape (\arabic*)},
  ref        = {(\arabic*)},
  leftmargin = *
}
\setlist[thmlist, 2]{
  itemsep    = 0.2cm,
  label      = {\upshape (\alph*)},
  ref        = {(\alph*)},
  leftmargin = *
}

\newcommand{\SAGsubseclink}[1]{\href{https://www.math.ias.edu/~lurie/papers/SAG-rootfile.pdf\#subsection.#1}{#1}}

\newcommand{\HAthmlink}[1]{\href{https://www.math.ias.edu/~lurie/papers/HA.pdf\#theorem.#1}{#1}}

\newcommand{\stackstag}[1]{\href{https://stacks.math.columbia.edu/tag/#1}{Tag #1}}

\newcommand{\kerodontag}[1]{\href{https://kerodon.net/tag/#1}{Tag #1}}

\newcommand{\HTTsubsec}[1]{\href{https://www.math.ias.edu/~lurie/papers/HTT.pdf\#subsection.#1}{\S #1}}

\newcommand{\SAGsubsec}[1]{\href{https://www.math.ias.edu/~lurie/papers/SAG-rootfile.pdf\#subsection.#1}{\S #1}}

\newcommand{\HTTthm}[2]{\href{https://www.math.ias.edu/~lurie/papers/HTT.pdf\#theorem.#2}{#1 #2}}
\newcommand{\HAthm}[2]{\href{https://www.math.ias.edu/~lurie/papers/HA.pdf\#theorem.#2}{#1 #2}}

\newcommand{\SAGthm}[2]{\href{https://www.math.ias.edu/~lurie/papers/SAG-rootfile.pdf\#theorem.#2}{#1 #2}}

\newcommand{\stacks}[1]{\cite[\stackstag{#1}]{stacksproject}}
\newcommand{\kerodon}[1]{\cite[\kerodontag{#1}]{kerodon}}
\newcommand{\HTT}[2]{\cite[\HTTthm{#1}{#2}]{HTT}}
\newcommand{\HA}[2]{\cite[\HAthm{#1}{#2}]{HA}}
\newcommand{\SAG}[2]{\cite[\SAGthm{#1}{#2}]{SAG}}

%% file: styles/macros.tex
\newcommand{\from}{\colon}

\newcommand{\ot}{\leftarrow}

\newcommand{\surj}{\twoheadrightarrow}
\DeclareRobustCommand\longtwoheadrightarrow {\relbar\joinrel\twoheadrightarrow}
\newcommand{\lsurj}{\longtwoheadrightarrow} 

\newcommand{\inj}{\hookrightarrow}
\newcommand{\hooklongrightarrow}{\lhook\joinrel\longrightarrow}
\newcommand{\linj}{\hooklongrightarrow}

\DeclarePairedDelimiter{\sqrbr}{[}{]}

\DeclarePairedDelimiter{\vertbr}{|}{|}

\DeclarePairedDelimiterX{\setcond}[2]{\{}{\}}{#1 \;\delimsize\vert\; #2}

\newcommand{\blank}{-}

\newcommand{\equivalent}{\simeq}

\newcommand{\andeq}{\text{\qquad and\qquad}}

\DeclareMathOperator{\pr}{pr}

\newcommand{\lang}{\longrightarrow}

\newcommand{\fin}{\operatorname{fin}}

\DeclareMathOperator{\ev}{\operatorname{ev}}

\renewcommand{\in}{\smallin}

\newcommand{\cartesian}{\arrow[dr, phantom, very near start, "{ \lrcorner }"]}
\newcommand{\diagr}[1]{\scriptstyle{(#1)}}

\newcommand{\cat}{\mathbf}
\newcommand{\catC}{\cC}
\newcommand{\catD}{\cD}
\newcommand{\catI}{\cI}
\newcommand{\catE}{\cE}

\newcommand{\op}{\operatorname{op}}
\newcommand{\lex}{\operatorname{lex}}

\newcommand{\ladj}{\dashv}
\newcommand{\vsim}{\wr}

\newcommand{\overcat}[2]{{#1}_{\!/\!#2}}

\newcommand{\undercat}[2]{{#1}_{\!#2/}}
\NewDocumentCommand{\Ani}{o}{\IfValueTF{#1}{\overcat{\cat{Ani}}{#1}}{\cat{A\hspace{-.2ex}ni}}}
\newcommand{\pifinAni}{\Ani_{\pi}}
\newcommand{\SigmafinAni}{\Ani_{\pi}}

\DeclareMathOperator{\Tw}{Tw}

\NewDocumentCommand{\SigmaAni}{O{\pi}}{\Ani_{#1}}
\NewDocumentCommand{\SigmapfAni}{O{\pi}}{\Pro(\Ani_{#1})}

\NewDocumentCommand{\pfAni}{o}{\Pro(\pifinAni)\IfValueT{#1}{_{/#1}}}
\NewDocumentCommand{\spfAni}{o}{\IfValueTF{#1}{\overcat{(\simpl{\pfAni})}{#1}}{\simpl{\pfAni}}}
\NewDocumentCommand{\truncpfAni}{O{1} o}{\SigmapfAni[\pi, \leq #1\IfValueT{#2}{/#2}]}

\newcommand{\ptpifinAni}{\Ani_{\pi, \terminal}}
\NewDocumentCommand{\ptpfAni}{o}{\SigmapfAni[\pi, \point \IfValueT{#1}{/#1}]}

\DeclareMathOperator{\sets}{\cat{Set}}
\newcommand{\finSets}{\sets_{\fin}}

\DeclareMathOperator{\Grp}{\cat{Grp}}
\newcommand{\finGrp}{\Grp_{\fin}}

\DeclareMathOperator{\Ab}{\cat{A\hspace{-.3ex}b}}
\newcommand{\finAb}{\Ab_{\fin}}

\newcommand{\ssets}{\cat{sSet}}

\DeclareMathOperator{\Hom}{Hom}

\DeclareMathOperator{\Aut}{Aut}

\DeclareMathOperator{\Pro}{Pro}

\DeclareMathOperator{\unit}{\eta}
\DeclareMathOperator{\counit}{\epsilon}

\DeclareMathOperator{\id}{id}

\DeclareMathOperator{\Fun}{Fun}

\newcommand{\isomto}{\xlongrightarrow{\,\smash{\raisebox{-1ex}{\ensuremath{\displaystyle\sim}}}\,}}
\newcommand{\isomfrom}{\xlongleftarrow{\,\smash{\raisebox{-1ex}{\ensuremath{\displaystyle\sim}}}\,}}

\DeclareSymbolFont{supplsymbols}{T1}{\familydefault}{m}{n}
\SetSymbolFont{supplsymbols}{bold}{T1}{\familydefault}{bx}{n}

\DeclareMathSymbol{\frenchlq}{\mathopen}{supplsymbols}{19}
\DeclareMathSymbol{\frenchrq}{\mathclose}{supplsymbols}{20}

\DeclareMathOperator*{\limit}{lim}
\DeclareMathOperator*{\colimit}{colim}

\makeatletter
  \def\varphlim@#1#2{%
    \vtop{\m@th\ialign{##\cr
      \hfil$#1\operator@font\phantom{lim}$\hfil\cr
      \noalign{\nointerlineskip\kern1.5\ex@}#2\cr
      \noalign{\nointerlineskip\kern-\ex@}\cr}}%
  }
  \def\varinjcolim{%
    \mathop{\ooalign{colim\cr\hfil$\mathpalette\varphlim@{\rightarrowfill@\textstyle}$\hfil}}\nmlimits@%
  }
\makeatother

\DeclareMathOperator{\cofilteredlim}{\limit}

\DeclareMathOperator{\prolimit}{\frenchlq \cofilteredlim \frenchrq}

\DeclareMathOperator{\rep}{\mathit{h}}

\newcommand{\pro}{\operatorname{pro}}
\newcommand{\cofilt}{\operatorname{cofilt}}
\DeclareMathOperator{\D}{\cD}

\DeclareMathOperator{\C}{\mathrm{C}}

\DeclareMathOperator{\RMod}{RMod}

\newcommand{\topos}{\mathscr}

\DeclareMathOperator{\Shape}{\Pi_{\infty}}
\NewDocumentCommand{\pfShape}{O{\blank}}{\pfcompl[\Shape(#1)]}

\DeclareMathOperator{\globsec}{\Gamma}

\DeclareMathOperator{\Ho}{ho}
\DeclareMathOperator{\h}{h}

\DeclareMathOperator{\nerve}{\operatorname{N}}
\DeclareMathOperator{\cechnerve}{\operatorname{\check{N}}}
\DeclareMathOperator{\trunc}{\tau}

\newcommand{\simpl}[1]{\operatorname{s}\! #1}
\NewDocumentCommand{\simplex}{o}{\mathbf{\Delta}\IfValueT{#1}{^{\! #1}}}
\NewDocumentCommand{\dsimplex}{O{n}}{\partial\!\simplex^{\! #1}}

\DeclareMathOperator{\map}{Map}
\newcommand{\Map}{\map}

\NewDocumentCommand{\htpycls}{O{\blank} O{\blank} o o}{\sqrbr{#1, #2}\IfValueT{#3}{_{#3}}\IfValueT{#4}{^{#4}}}

\DeclareMathOperator{\htpygrp}{\pi}
\newcommand{\conncomp}{\pi_0}
\newcommand{\connected}{>0}

\NewDocumentCommand{\pfcompl}{O{(\blank)} O{\pi}}{#1^{\wedge}_{#2}}

\newcommand{\terminal}{\ast}
\newcommand{\point}{\terminal}
\DeclareMathOperator{\fib}{fib}

\newcommand{\LoopAni}{\mathrm{\Omega}}
\DeclareMathOperator{\B}{\mathrm{B}}
\DeclareMathOperator{\K}{\mathrm{K}}

\NewDocumentCommand{\real}{O{\blank}}{\vertbr{#1}}

\NewDocumentCommand{\mat}{O{\blank}}{\vertbr{#1}}

\newcommand{\doubleslash}{{/\mkern-6mu/}}
\newcommand{\modmod}{\doubleslash}

\DeclareMathOperator{\sBar}{Bar}

\DeclareMathOperator{\Sup}{\mathrm{S}}

\newcommand{\et}{{\operatorname{\acute{e}t}}}

\DeclareMathOperator{\FEt}{F\acute{E}t}

\newcommand{\etsite}[1]{\operatorname{{{\acute{E}}t}_{#1}}}
\newcommand{\etsiteop}[1]{\operatorname{{{\acute{E}}t}_{#1}^\mathrm{op}}}
\NewDocumentCommand{\ettopos}{m o}{{#1}_{\et}\IfValueT{#2}{^{#2}}}
\newcommand{\ethyptopos}[1]{\ettopos{#1}[\hyp]}

\DeclareMathOperator{\etfdtlgrp}{\hat{\pi}_{1}}
\NewDocumentCommand{\ethtpygrp}{O{1} O{\blank}}{\hat{\pi}_{#1}(#2)}

\DeclareMathOperator{\ethtpytype}{\widehat{\Pi}}
\newcommand{\open}{\operatorname{op}}
\newcommand{\pioneop}{\pi_{\!1}\textnormal{-}\!\open}
\newcommand{\etapioneop}{\eta\textnormal{-}\pi_{\!1}\textnormal{-}\!\open}

\NewDocumentCommand{\ethtpycls}{O{\blank} O{\blank} o o}{\htpycls[\ethtpytype(#1)][\ethtpytype(#2)][\ethtpytype(#3)][#4]}
\NewDocumentCommand{\etopcls}{O{\blank} O{\blank} o}{\ethtpycls[#1][#2][#3][^{\pioneop}]}
\NewDocumentCommand{\etetaopcls}{O{\blank} O{\blank} o}{\ethtpycls[#1][#2][#3][^{\etapioneop}]}
\NewDocumentCommand{\etisocls}{O{\blank} O{\blank} o}{\ethtpycls[#1][#2][#3][^{\simeq}]}

\DeclareMathOperator{\HC}{\operatorname{HC}}
\DeclareMathOperator{\HCet}{\HC^{\et}}

\DeclareMathOperator{\opGal}{\operatorname{Gal}}

\newcommand{\absGal}[1]{\opGal_{#1}}
\DeclareMathOperator{\Galois}{Gal}

\DeclareMathOperator{\Stab}{Stab}

\newcommand{\out}{\operatorname{out}}

\DeclareMathOperator{\Sch}{\cat{Sch}}

\newcommand{\dom}{\operatorname{dom}}

\DeclareMathOperator{\Spec}{Spec}

\DeclareMathOperator{\HH}

\DeclareMathOperator{\Sh}{Shv}
\newcommand{\hyp}{\operatorname{hyp}}

\newcommand{\qcqs}{\operatorname{qcqs}}

\newcommand{\ph}{\varphi}

\newcommand{\rH}{H}

\newcommand{\bA}{{\mathbb A}}

\newcommand{\bN}{{\mathbb N}}

\newcommand{\bZ}{{\mathbb Z}}

\newcommand{\cC}{{\mathscr C}}
\newcommand{\cD}{{\mathscr D}}
\newcommand{\cE}{{\mathscr E}}

\newcommand{\cI}{{\mathscr I}}

\newcommand{\cX}{{\mathscr X}}

\newcommand{\dM}{{\mathcal M}}

\newcommand{\dO}{{\mathcal O}}

\newcommand{\fh}{{\mathfrak h}}

\newcommand{\defding}[1]{\emph{#1}}

%% file: content/introduction.tex
%----------------------------------------------------------------------------------------------------------------------------------
\subsection{Anabelian geometry and étale homotopy theory}

Traditional anabelian geometry compares morphisms between unpointed connected schemes with classes of homomorphisms between their étale fundamental groups. 
The latter is an algebraic invariant defined on connected schemes together with a choice of a base point.
Replacing the étale fundamental group $\etfdtlgrp$ by the étale fundamental groupoid  $\ethtpytype_{\le 1}$ solves this conceptual shortcoming: the comparison is by applying a functor defined on unpointed objects. 

Higher dimensional anabelian geometry naturally belongs to the realm of {étale homotopy theory}.
It studies the functor that assigns to a qcqs scheme $X$ its profinite étale homotopy type $\ethtpytype(X)$ of which the fundamental groupoid $\ethtpytype_{\le 1}(X)$ is the $1$-truncation. 
Beginning with Artin and Mazur \cite{AM}, several different incarnations of the étale homotopy type occur in the literature.

In this paper, we consider $\ethtpytype(X)$ as an object of the $\infty$-category $\pfAni$ of profinite anima.
The approach of working with profinite anima offers significant technical advantages.
Above all, there is better compatibility with base change and proétale descent, which we use in \Cref{sec:spreading-out-homotopies} to spread out homotopies.
To provide the required foundational background, we have compiled necessary definitions and facts on profinite homotopy theory  in an appendix.

In order to state our main result, we need to fix some notation.
For a profinite anima $B$, we denote the $\infty$-category of profinite anima over $B$ by $\pfAni[B]$.
We also write
\[
    \map_{B}(E', E) 
    = \map_{\pfAni[B]}(E', E)
    \andeq
    \htpycls[E'][E][B]    = \pi_{0}\! \map_{B}(E', E)
\]
for the anima of maps $E' \to E$ over $B$ and the corresponding set of homotopy classes.
Furthermore, we call a map $\varphi \colon E' \to E$  of profinite anima \defding{$\pi_1$-open} if for any point $e'$ of $E'$ with image $e = \varphi(e')$ in~$E$, the induced homomorphism of profinite groups $\pi_1(E', e') \to \pi_1(E, e)$ is open. The collection of all $\pi_1$-open maps $E' \to E$ forms an anima with a monomorphism, see \cref{subsec:monomorphisms}.
\[
 \map^{\pioneop}_{B}(E', E)  \linj \map_{B}(E', E),
\]
and its connected components form the subset of $\pi_1$-open homotopy classes
\[
\htpycls[E'][E][B]^{\pioneop} = \pi_0\! \map^{\pioneop}_{B}(E', E) = \{\varphi \in \htpycls[E'][E][B]  \ ; \  \varphi \text{ is $\pi_1$-open}\} \subseteq \htpycls[E'][E][B] .
\]

%----------------------------------------------------------------------------------------------------------------------------------
\subsection{The main result}
A field is \defding{sub-$p$-adic} if it has an embedding into a finitely generated field extension of~${\mathds Q}_p$ for some prime number $p$.
A \defding{hyperbolic curve} over a field $k$ is the complement $X$ of a divisor $D$ in a smooth projective geometrically connected curve $\smash{\overline{X}/k}$ of genus $g$ such that $D$ is étale over $k$ and $X$ has negative Euler characteristic $\chi(X) = 2 - 2g - \deg(D)<0$.
A \defding{relative curve over  $S$} is the complement $\smash{X = \overline{X} \smallsetminus D}$ of a relative étale divisor $D$ in a smooth proper relative curve $\smash{\overline{X} \to S}$ with geometrically connected fibres.
We call $X\to S$ \emph{hyperbolic} if the fibres are hyperbolic curves.

\begin{thmABC} \label{maintheorem}
Let $S$ be a connected, normal scheme of finite type over a sub-$p$-adic field  and let $X\to S$ be a relative hyperbolic curve.
Then, for every smooth connected  $S$-scheme $Y$ of finite type, surjective over $S$, and with geometrically connected generic fibre,  the natural map
\begin{equation} 
\label{eq:main map2}
	\ethtpytype \colon \Hom_{S}^{\dom}(Y, X) \longrightarrow \etopcls[Y][X][S]
\end{equation}
is  a bijection from the set of dominant $S$-morphisms to the set of homotopy classes of $\pi_1$-open maps over~$\ethtpytype(S)$.
\end{thmABC}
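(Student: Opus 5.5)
Injectivity and surjectivity can be reduced to the generic point $\eta$ of $S$, where Mochizuki's theorem is available (in the anima version of \cref{sec:mochizuki}), and the results are then spread back over $S$ using normality and the homotopy-theoretic input of \cref{sec:basechange,sec:spreading-out-homotopies,sec:extending-curves,sec:finalize-proof}.

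Let $K$ be the function field of $S$. It is finitely generated over a sub-$p$-adic field, hence itself sub-$p$-adic. The generic fibre $X_\eta$ is a hyperbolic curve over $K$, and $Y_\eta$ is a smooth, geometrically connected $K$-variety. Since $X \to S$ is a relative hyperbolic curve, it is a $\K(\pi,1)$ fibration in the sense of \cref{sec:BG}. The base change results of \cref{sec:basechange} should then show that $\ethtpytype(X_\eta) \simeq \ethtpytype(X)\times_{\ethtpytype(S)} \ethtpytype(\eta)$ is a homotopy pullback, and likewise for $Y$ (using smoothness and geometric connectedness of $Y_\eta$). Pulling back along $\ethtpytype(\eta)\to\ethtpytype(S)$ therefore gives a restriction map
\[
\etopcls[Y][X][S] \longrightarrow \etopcls[Y_\eta][X_\eta][\eta],
\]
which preserves $\pi_1$-openness because $\ethtpytype(Y_\eta)\to\ethtpytype(Y)$ is $\pi_1$-surjective for $Y$ normal. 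This map is compatible with restricting dominant morphisms to generic fibres. By Mochizuki, in the form of \cref{sec:mochizuki}, dominant $K$-morphisms $Y_\eta \to X_\eta$ correspond bijectively to $\pi_1$-open classes over $\ethtpytype(\eta)=\B\absGal{K}$.

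For injectivity, two dominant $S$-morphisms with the same homotopy class agree on $Y_\eta$. Since $Y$ is reduced and $X$ is separated over $S$, they then agree on all of $Y$; here we use that $Y_\eta$ is dense in $Y$, which holds because $Y\to S$ is flat.

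Surjectivity goes in three steps. Start from a $\pi_1$-open class $\varphi$ over $\ethtpytype(S)$. Its restriction to $\eta$ comes from a dominant $K$-morphism $f_\eta \colon Y_\eta \to X_\eta$, which spreads out to $f_U \colon Y_U\to X_U$ over a dense open $U\subseteq S$. The first step is to show that $\ethtpytype(f_U)$ and $\varphi|_U$ agree over $\ethtpytype(U)$. They agree after restriction to $\eta$, and \cref{sec:spreading-out-homotopies} (proétale descent, with $\eta$ the limit of the opens $U$) spreads this homotopy to some smaller $U$. The second step extends $f_U$ from $Y_U$ to all of $Y$ using the anabelian extension criterion of \cref{sec:extending-curves}. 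Since $Y$ is normal and $X$ is a hyperbolic curve, the obstruction to extending across a codimension-one point is detected by whether $\varphi$ exists on the étale homotopy type, i.e.\ by the behaviour of inertia. Existence of $\varphi$ on all of $Y$ forces the relevant inertia to act trivially, so $f$ extends; purity handles codimension $\ge 2$. The third step uses the rigidity of \cref{sec:finalize-proof}: maps into the $\K(\pi,1)$ object $\ethtpytype(X)$ over $\ethtpytype(S)$ that agree on the dense open $Y_U$ agree, so $\ethtpytype(f)\simeq\varphi$.

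I expect the main obstacle to be this last extension-and-rigidity step. Homotopy classes over $\ethtpytype(S)$ are not determined by fibrewise data in general, and one needs the centre-freeness of fundamental groups of hyperbolic curves, together with the surjectivity $\pi_1(Y_U)\to\pi_1(Y)$, to conclude that mapping spaces are discrete and restriction to $Y_U$ is injective on $\pi_0$.
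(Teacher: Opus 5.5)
Your overall strategy is the paper's: restrict to $\eta$, apply Mochizuki there, and come back using \cref{thm:spreading-out-homotopies} and the extension criterion of \cref{sec:extending-curves}. Your injectivity argument is fine. The gap is in the surjectivity part, at the claim that restriction to $\eta$ sends $\pi_1$-open classes to $\pi_1$-open classes because $\etfdtlgrp(Y_\eta)\to\etfdtlgrp(Y)$ is surjective. That surjectivity only gives the converse: if $\varphi_\eta$ is $\pi_1$-open, so is $\varphi$. Openness of the image in $\etfdtlgrp(X)$ says nothing about the image in $\etfdtlgrp(X_\eta)$, because $\etfdtlgrp(X_\eta)\to\etfdtlgrp(X)$ has a large kernel (it surjects onto $\ker(\absGal{K}\to\etfdtlgrp(S))$).

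Your argument never uses that $Y\to S$ is surjective, and item (1) of the Remarks after \cref{maintheorem} shows this hypothesis cannot be dropped. There $Y$ is smooth and connected with dense, geometrically connected generic fibre $Y_\eta\cong\bA^1_K$, and $\varphi=\ethtpytype(\tau)$ is an equivalence, hence $\pi_1$-open. Yet $\etfdtlgrp(\varphi_\eta)$ maps $\etfdtlgrp(Y_\eta)\cong\absGal{K}$ onto the image of a section of $\etfdtlgrp(X_\eta)\to\absGal{K}$, which has infinite index. So you cannot feed $\varphi_\eta$ into Mochizuki's theorem as it stands.

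The missing input is \cref{thm:eta-open}. One compares $\varphi$ with the induced map $\varphi_{\bar\eta}$ on geometric generic fibres using two sequences:
\begin{itemize}
\item the Katz--Lang sequence $\etfdtlgrp(Y_{\bar\eta})\to\etfdtlgrp(Y)\to\etfdtlgrp(S)\to 1$, which is where surjectivity of $Y\to S$ and geometric connectedness of $Y_\eta$ enter;
\item the sequence $1\to\etfdtlgrp(X_{\bar\eta})\to\etfdtlgrp(X)\to\etfdtlgrp(S)\to 1$. It is exact in the middle and on the right by the quasifibration property, \cref{thm:generalisation-of-Friedlander}. It is injective on the left because the kernel is central while $\etfdtlgrp(X_{\bar\eta})$ is centre-free by \cref{lem:centre-free}.
\end{itemize}
A diagram chase shows that the images of $\etfdtlgrp(\varphi)$ and $\etfdtlgrp(\varphi_{\bar\eta})$ have the same index. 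By the exact sequences over $\eta$, the latter image is open if and only if the image of $\etfdtlgrp(\varphi_\eta)$ is.

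Your third step also needs a different justification. In general $\ethtpytype(S)$ is not $1$-truncated, and then neither is $\ethtpytype(X)$. So centre-freeness together with surjectivity of $\etfdtlgrp(Y_U)\to\etfdtlgrp(Y)$ does not control homotopy classes over $\ethtpytype(S)$. This is exactly what \cref{thm:spreading-out-homotopies} handles: it resolves $S$ by $w$-contractible proétale hypercovers, so the base becomes $0$-truncated and surjectivity on $\etfdtlgrp$ yields an epimorphism of $1$-truncated profinite anima. The theorem says restriction to $\eta$ is a monomorphism of mapping anima over all of $\ethtpytype(S)$, not merely after shrinking to some $U$. With it:
\begin{itemize}
\item your first step is unnecessary, since the extension criterion of \cref{sec:extending-curves} applied with $S'=\eta$ only needs compatibility at $\eta$;
\item your third step is immediate, since $\ethtpytype(f)$ and $\varphi$ both restrict to the class of $\ethtpytype(f_\eta)$.
\end{itemize}

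Finally, the homotopy pullback property you assert for $Y$ is not needed, since the base change map only requires $X\to S$ to be a quasifibration. It also fails in general. For $\bA^2\smallsetminus\{0\}\to\bA^1$ the homotopy pullback has nontrivial $\pi_3$, whereas $\ethtpytype(Y_\eta)\simeq\B\absGal{K}$.
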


\begin{remarks} 
\begin{enumerate}
	\item \label{ex:counterexample-alex-jakob}
	The surjectivity assumption on $Y \to S$ in \Cref{maintheorem} is necessary.
	Let $k$ be a field of characteristic $0$ and let $S = \bA^1_k$. 
	Let $U \subset S$ be an open that is a hyperbolic curve over $k$. 
	We set $X = U \times_k S \to S$, the constant curve over $S$ with fibre $U$, 
	and $Y = S \times_k U \to U \inj S$, the constant curve over $U$ with fibre $S$ considered as an $S$-curve. 
	Then there is no dominant $S$-map $Y \to X$. However, the isomorphism $\tau \colon Y \to X$ 
	that interchanges the factors induces a $\ethtpytype(S)$-equivalence 
	$\ethtpytype(\tau) \colon \ethtpytype(Y)\to \ethtpytype(X)$ because $\tau$ is a $k$-morphism 
	and the projection $S \to \Spec(k)$ induces an equivalence 
	$\ethtpytype(S) \isomto \ethtpytype(\Spec(k))$ of profinite anima.
	\item
	We will prove a refined version of \cref{maintheorem}.
        In fact, the anima $\map^{\pioneop}_{\ethtpytype(S)}(\ethtpytype(Y), \ethtpytype(X))$ is discrete, 
        see \cref{maintheorem-sharp}.
	Hence, considering the set $\Hom_{S}^{\dom}(Y, X)$ as a discrete anima, 
	the map \eqref{eq:main map2} of \Cref{maintheorem} lifts to an equivalence 
	\[
		\ethtpytype \colon \Hom_{S}^{\dom}(Y, X) 
		\isomto 
		\map^{\pioneop}_{\ethtpytype(S)}\big(\ethtpytype(Y), \ethtpytype(X)\big) .
	\]
	This constitutes a strong rigidity principle for maps to the profinite étale homotopy type of a relative hyperbolic curve.
\end{enumerate}
\end{remarks}

\subsection{Outline of the proof}
The starting point of this paper is the observation to be explained in \cref{cor:Mochizuki-via-homotopy-types} that, in the special case $S=\Spec(k)$, \cref{maintheorem} is a reformulation of Mochizuki's theorem on the anabelian geometry of hyperbolic curves \cite{Mochizuki99}.
This reformulation is similar in spirit but not exactly the same as the reformulation given in \cite[Theorem 3.1]{Schmidt-Stix16}, where the étale homotopy type was considered as an object of the homotopy category of the pro-category of simplicial sets.
Then we proceed by reducing \cref{maintheorem} to this special case.

Let $\eta \in S$ be the generic point, and $X_\eta = X \times_S \eta$ the generic fibre of $X$.
Our strategy is to reduce  \Cref{maintheorem} to the  case $S=\eta$  via the commutative diagram
\begin{equation}
\label{eq:pre main square}
    \begin{tikzcd}
        \Hom_{S}(Y, X) \arrow[r] \arrow[d, hook, "\diagr{1}"']
        & \ethtpycls[Y][X][S] \arrow[d, hook, "\diagr{2}"] \\
        \Hom_{\eta}(Y_{\eta}, X_{\eta}) \arrow[r]
        & \ethtpycls[Y_{\eta}][X_\eta][\eta].
    \end{tikzcd}
\end{equation}
That the restriction to the generic fibre $\diagr{1}$ is injective, is an easy geometric fact. 
A major difficulty is the injectivity of $\diagr{2}$ that we show in \cref{sec:spreading-out-homotopies} using proétale descent, after establishing the mere existence of the arrow $\diagr{2}$ in \cref{sec:basechange} in the first place.
In \cref{sec:extending-curves}, we show that  \eqref{eq:pre main square} is cartesian: a morphism $g \colon Y_{\!\eta} \to X_\eta$ over $\eta$ extends to a morphism $f \colon Y \to X$ over $S$ if and only if the induced map on profinite homotopy types extends up to homotopy.
This extension follows from the main result of \cite{stix:monodromy-extension},  which generalises work by Moret-Bailly \cite{Moret-B-pur}.

Diagram  \eqref{eq:pre main square} needs a modification to be useful in the proof of \Cref{maintheorem}.
We say that  a map $\ethtpytype(Y) \to \ethtpytype(X)$ over $\ethtpytype(S)$ is \defding{$\eta$-$\pi_1$-open} if its restriction $\ethtpytype(Y_{\!\eta}) \to \ethtpytype(X_\eta)$ to $\ethtpytype(\eta)$ is $\pi_1$-open.
Now we restrict to dominant morphisms on the geometric side and to ($\eta$-)$\pi_1$-open homotopy classes of maps on the homotopy theoretic side:
\begin{equation}
\label{eq:main square}
    \begin{tikzcd}
        \Hom_{S}^{\dom}(Y, X) \arrow[r, "\diagr{4}"] \arrow[d, hook, "\diagr{1}"']
        & \ethtpycls[Y][X][S]^{\etapioneop} \arrow[d, hook, "\diagr{2}"] \\
        \Hom_{\eta}^{\dom}(Y_{\eta}, X_{\eta}) \arrow[r, "\diagr{3}"', "{\sim}"]
        & \ethtpycls[Y_\eta][X_\eta][\eta]^{\pi_1\text{-op}}.
    \end{tikzcd}
\end{equation}
The lower horizontal map $\diagr{3}$ is bijective by the reformulation of Mochizuki's theorem \cite[Theorem~A]{Mochizuki99} in terms of profinite anima, see \cref{cor:Mochizuki-via-homotopy-types}.
Since \eqref{eq:main square} is also cartesian,  this identifies the images of $\diagr{1}$ and $\diagr{2}$ via the isomorphism $\diagr{3}$. This shows the bijectivity of $\diagr{4}$,  which is the map of \cref{maintheorem} because the properties $\eta$-$\pi_1$-open and $\pi_1$-open are equivalent in this situation, as we show in \cref{sec:finalize-proof}. This finishes the proof of \cref{maintheorem}.

%----------------------------------------------------------------------------------------------------------------------------------
\subsection{Notation and conventions}
Throughout this paper we write \defding{qcqs} as an abbreviation for \lq\lq quasi-compact and quasi-separated\rq\rq.
We write $\Sch^{\qcqs}$ for the full subcategory of the category of schemes $\Sch$ spanned by qcqs schemes.
For qcqs schemes we frequently use noetherian approximation as in \cite[Appendix C]{TT90}, see also \stacks{01Z1}.

For an $S$-scheme $X$, the base change $X \times_S T$ is denoted by $X_T$. 
By the phrase \emph{\'{e}tale cover}  we mean finite \'{e}tale morphism, i.e., rev\^{e}tement \'{e}tale in the sense of \cite{SGA1}.
An \emph{\'{e}tale covering} is a surjective family of étale morphisms. 
Usually, we suppress \enquote{$\et$} from the notation.
For example, we write $\etfdtlgrp$ for the (profinite) étale fundamental group and similarly for higher étale homotopy groups, étale homotopy types, cohomology, and so on.

\smallskip
We freely use the language of $\infty$-categories as developed in \cite{HTT}, \cite{HA}, and \cite{SAG}.
In particular, given an ordinary $1$-category $\catC$, we consider it as an $\infty$-category via the fully faithful nerve construction $\nerve_{\bullet} \from \cat{Cat} \to \ssets$, see \cite[\kerodontag{002L}, \kerodontag{002Z}]{kerodon}.
Moreover, we usually suppress the application of $\nerve_{\bullet}$ from the notation and still write $\catC$ for the $\infty$-category $\nerve_{\bullet}\!\catC$.

We follow Clausen and Scholze's terminology and write $\Ani$ for the $\infty$-category of anima, given as the homotopy coherent nerve of the simplicial category of fibrant simplicial sets (i.e., what Lurie refers to as the $\infty$-category of \defding{spaces} or  of \defding{$\infty$-groupoids}).
The word \defding{anima} means an object (a vertex) of $\Ani$, i.e., a Kan complex in classical language.
The homotopy category $\h\!\Ani$ is precisely the usual homotopy category $\Ho(\ssets)$ of the ($1$-)category of simplicial sets with the Kan-Quillen model structure.

Let $\catC$ be an $\infty$-category and let $s \in \catC$ be an object (i.e., a vertex).
\begin{itemize}
        \item We write $\overcat{\catC}{s}$ (resp. $\undercat{\catC}{s}$) for the $\infty$-category of objects over $s$ (resp. under $s$).
        \item We usually write $\map_{s}(y, x)$ instead of $\map_{\overcat{\catC}{s}}(y, x)$ for the mapping anima in the overcategory.
        \item Given two maps (i.e., edges of the mapping anima) $f\!, g \from y \to x$ in $\overcat{\catC}{s}$, we sometimes write $f \simeq_{s} g$ if $f$ is homotopic to $g$ in $\map_{s}(y, x)$.

        \item A terminal object of $\catC$ will often be denoted by $\terminal$, and we refer to $\catC_{\terminal} = \undercat{\catC}{\terminal}$ as the \emph{$\infty$-category of pointed objects in $\catC$}.
        \item We write $\map_{\point}(y, x)$ instead of $\map_{\catC_{\terminal}}(y, x)$ for the anima of pointed maps in $\catC$.
\end{itemize}

%----------------------------------------------------------------------------------------------------------------------------------
\addtocontents{toc}{\SkipTocEntry}
\subsection*{Acknowledgments}
The authors acknowledge support by Deutsche Forschungsgemeinschaft  (DFG) through the Collaborative Research Centre TRR 326 ``Geometry and Arithmetic of Uniformized Structures'', project number 444845124.
TH was furthermore supported through the Walter Benjamin Programme (DFG, project number 565137331) and thanks IH\'ES for providing excellent working conditions.

%%% Local Variables:
%%% mode: LaTeX
%%% TeX-master: "../haupt"
%%% End:

%% file: content/profinite-anima.tex
In this section we recall the definition of the profinite étale homotopy type as an object of the procategory of $\pi$-finite anima and compare it with the classical construction of Artin and Mazur.
In particular, we compare the \'etale homotopy groups in \Cref{prop:profiniteness}.
Moreover, we survey  pro\'etale descent of the profinite étale homotopy type, which will be needed later in the proof in our \Cref{thm:spreading-out-homotopies} about spreading out homotopies.

%----------------------------------------------------------------------------------------------------------------------------------
\subsection{The étale homotopy type} 

We start by recalling the notion of \emph{shape}.

\begin{recollection} \label{rec:shape}
        \begin{thmlist}
            \item
            Following \cite[\HTTsubsec{7.1.6}]{HTT}, the \defding{shape} $\Shape\!\topos{X}$ of an $\infty$-topos $\topos{X}$ is the object of $\Pro(\Ani) \equivalent \Fun^{\lex}(\Ani,\Ani)^{\op}$ given by the  composite
            \begin{equation*}
                \globsec_{\! \topos{X}\!,\ast} \circ \globsec^{*}_{\! \topos{X}} \from \Ani \to \Ani,
            \end{equation*}
            where $\globsec_{\! \topos{X}\!,\ast} = \Gamma(\cX,-)$ is the global section functor on $\topos{X}$ and $\globsec^{*}_{\topos{X}}$ its left adjoint, the constant sheaf functor.
            That is, for each anima $B$, there is a natural equivalence
            \[
                \map(\Shape(\topos{X}), B) \simeq \globsec(\topos{X}, \globsec^{*}_{\topos{X}}B).
            \]

            \item
            Given a geometric morphism $f_{\!*} \from \topos{X} \to \topos{Y}$ with unit $ \unit \from \id_{\topos{Y}} \to f_{\!*} \circ f^{*}$, there is an induced map of proanima $\Shape(\topos{X}) \to \Shape(\topos{Y})$ given by the map
            \begin{equation*}
                \globsec_{\topos{Y}, *} \unit \globsec^{*}_{\topos{Y}} \from \globsec_{\topos{Y}, *} \globsec^{*}_{\topos{Y}} \longrightarrow \globsec_{\topos{Y}, *} f_{\!*} f^{*} \globsec^{*}_{\topos{Y}} \equivalent \globsec_{\topos{X}, *} \globsec^{*}_{\topos{X}}.
            \end{equation*}
        \end{thmlist}
       \end{recollection}

\begin{recollection}
    \label{rec:etale-sheaves}
    Let $X$ be a qcqs scheme. We denote by $\etsite{X}$ the (classical, $1$-categorical) small étale site of $X$. Let $\catC$ be any $\infty$-category.
    \begin{thmlist}
        \item A functor $F \from\! \etsiteop{X} \to \catC$ is an \defding{étale sheaf on $X$ with values in $\catC$} if the following two conditions are satisfied.\smallskip
            \begin{enumerate}[(i)] 
                \item
                The functor $F$ preserves finite products.
                \item \label{defitem:descent}
                For any $V \in \etsite{X}$ and étale surjection $U_{0} \surj V$ with \v{C}ech nerve $U_{\!\bullet}$ the diagram $F \circ U_{\!\bullet}$ is a limit diagram, i.e.,
                      \[
                            F(V) \simeq \lim (\begin{tikzcd}[cramped, sep=small]
                                F(U_{0})
                                    \arrow[r, shift left = 0.25em]
                                    \arrow[r, leftarrow]
                                    \arrow[r, shift right = 0.25em]
                                  &  F(U_{0} \times_{V} U_{0})
                                      \arrow[r, shift left = 0.5em]
                                      \arrow[r, leftarrow, shift left = 0.25em]
                                      \arrow[r]
                                      \arrow[r, leftarrow, shift right = 0.25em]
                                      \arrow[r, shift right = 0.5em]
                                    &  \ldots
                            \end{tikzcd}) .
                      \]
            \end{enumerate} 
        \item 
        A functor $F \from\! \etsiteop{X} \to \catC$ is an \defding{étale hypersheaf on $X$ with values in $\catC$} if it is an étale sheaf that satisfies condition \ref{defitem:descent} above for any étale hypercovering of $V \in \etsite{X}$, not just \v{C}ech nerves of étale coverings.
        \item 
        We write $\Sh_{\et}(X, \catC)$ (resp.\ $\Sh^{\hyp}_{\et}(X, \catC)$) for the full subcategories of $\Fun(\etsiteop{X}, \catC)$ spanned by the étale sheaves (resp.\ the étale hypersheaves).
        \item We call $\ettopos{X} = \Sh_{\et}(X, \Ani)$ (resp.\ $\ethyptopos{X} = \Sh_{\et}^{\hyp}(X, \Ani)$) the \defding{$\infty$-topos of étale sheaves on $X$} (resp.\ \defding{étale hypersheaves on $X$}).
        By \HTT{}{6.5.3.12}, $\ethyptopos{X}$ is the \defding{hypercompletion} of the $\infty$-topos~$\ettopos{X}$.  
            \end{thmlist}
\end{recollection}

We recall the definition of the category $\Pro(\Ani_\pi)$ of profinite anima and the profinite completion functor in \cref{app:profinite-anima}.

\begin{definition}
\label{def:etale-homotopy-type}
Let $X$ be a qcqs scheme. 
\begin{deflist}
	\item 
	We call $\Pi(X) =\Shape(X_{\et}^{\mathrm{hyp}}) \in \Pro(\Ani)$ the \defding{\'etale homotopy type} of $X$.
    	Any morphism $f \colon X \to S$ of qcqs schemes induces a map $\Pi(f) \from \Pi(X) \to \Pi(Y)$.  
    	\item 
	We refer to its profinite completion    
        \[
        		\ethtpytype(X) = \pfcompl[\Pi(X)] \in \Pro(\Ani_\pi)
        \]
        as the \defding{profinite \'etale homotopy type} of $X$.
        Any morphism $f \colon X \to S$ of qcqs schemes  induces a map
        \[
          	\ethtpytype(f) = \pfcompl[\Pi(f)] \from \ethtpytype(X) \to \ethtpytype(S).
        \]

    	\item 
    	Any geometric point $x \from \Spec(\Omega) \to X$ induces a point $\point_x$ of $\ethtpytype(X)$ by functoriality
    	\[
    		\ethtpytype(x) \colon \ethtpytype(\Spec(\Omega)) \to \ethtpytype(X).
    	\]
    	By means of $\point_x$ we define for every integer $n \geq 0$
         \[
           	\ethtpygrp[n][X,x] = \pi_n(\ethtpytype(X),\point_x), 
         \]
         the \defding{$n$-th profinite \'etale homotopy group} of $X$ with base point $x$. 
\end{deflist}
\end{definition}

\begin{remark}
The first profinite étale homotopy group of a pointed, connected scheme $\etfdtlgrp(X, x)$ is 
canonically isomorphic to the étale fundamental group of $(X, x)$ as defined in \cite{SGA1}.
\end{remark}

\begin{remark}[Sheaves versus hypersheaves] Let $B$ be a truncated anima. Then the constant étale sheaf associated with $B$ is truncated by \HTT{}{5.5.6.16} and hence already a hypersheaf by \HTT{}{6.5.2.9}.
Therefore the shapes $\Pi_\infty(X_{\et})^\wedge_\pi$ and $\pfShape[\ethyptopos{X}]$ corepresent the same functor $\Ani_\pi\to \Ani$, i.e.,
the natural map
$\Pi_\infty (\ethyptopos{X}) \to \Pi_\infty (\ettopos{X}) $ induces an equivalence on profinite completions
\[
\ethtpytype(X)=\Pi(X)^\wedge_\pi=\pfShape[\ethyptopos{X}] \isomto \pfShape[\ettopos{X}].
\]
\end{remark}
  
\begin{recollection}
  \label{rec:continuity-of-the-etale-homotopy-type} \label{rec:proetale-hyperdescent}
  The profinite étale homotopy type functor
  \[
    \ethtpytype \colon \Sch^{\qcqs} \longrightarrow \pfAni
  \]
  preserves cofiltered limits of schemes with affine transition maps, see e.g. \cite[Prop. 3.7]{HHW24}.
  In particular, the  profinite étale homotopy type is compatible with absolute noetherian approximation of qcqs schemes. 

Moreover, recall that the proétale topology of Bhatt and Scholze \cite{BS15} is a Grothendieck topology on $\Sch^{\qcqs}$. 
By \cite[Cor.\,2.41]{haine2025a}, the profinite completion of the shape of the hypercomplete proétale topos coincides with the profinite étale homotopy type.
Hence  $\ethtpytype$ is a hypercomplete proétale cosheaf, i.e., for any semi\-simplicial proétale hypercovering $U_{\!\bullet} \to X$ the induced diagram 
\[
	\ethtpytype(U_{\!\bullet}) \longrightarrow \ethtpytype(X)
\]
is a colimit diagram in $\Pro(\pifinAni)$. 
\end{recollection}

Finally, we note that the profinite étale homotopy type is a geometric invariant, at least in characteristic zero.

\begin{proposition}[{\cite[Example 4.18, Corollary 4.26]{HHW24}}]
\label{prop:geometric-invariant} 
Let $K/k$ be an extension of separably closed fields, $X$ a qcqs scheme over~$k$ and $X_K$ its base change to~$K$. Assume that $k$ is of characteristic zero or that $X$ is proper over~$k$.
Then the natural map
\[
	\ethtpytype(X_K) \longrightarrow \ethtpytype(X)
\]
is an equivalence of profinite anima.
\end{proposition}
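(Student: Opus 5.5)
The approach is to test the map $\ethtpytype(X_K) \to \ethtpytype(X)$ against $\pi$-finite anima. A map of profinite anima is an equivalence iff for every $\pi$-finite anima $B$ the induced map $\map(\ethtpytype(X), B) \to \map(\ethtpytype(X_K), B)$ is an equivalence. By \cref{rec:shape} and the remark on sheaves versus hypersheaves, this map is identified with
\[
	\globsec(\ettopos{X}, \globsec^{*} B) \longrightarrow \globsec(\ettopos{(X_K)}, \globsec^{*} B).
\]
Since $B$ is truncated, I would induct along its Postnikov tower, working over $B_{\le n-1}$ with fibre an Eilenberg--MacLane anima. This reduces the claim to three classical invariance statements for the base change $X_K \to X$:
\begin{enumerate}[(i)]
	\item bijectivity on connected components;
	\item an equivalence of categories of finite étale covers, i.e.\ of $B$-torsors for finite groups $B$, componentwise;
	\item isomorphisms $\rH^i(X, F) \isomto \rH^i(X_K, F|_{X_K})$ for all $i$ and all locally constant constructible abelian sheaves $F$ on $X$, including twisted ones.
\end{enumerate}
Twisted coefficients occur because the Eilenberg--MacLane fibres become local systems over $B_{\le n-1}$. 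Once (i)--(iii) hold, the five lemma on the long exact sequences of the fibration squares propagates the equivalence up the Postnikov tower.

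To establish (i)--(iii), I would first reduce to a finite-type situation. Write $K$ as the filtered colimit of finitely generated $k$-subalgebras $A$. Then $X_K = \lim X_A$ with affine transition maps, so \cref{rec:continuity-of-the-etale-homotopy-type} gives $\ethtpytype(X_K) \simeq \lim \ethtpytype(X_A)$. On the cohomological side, étale cohomology likewise commutes with such limits.

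In characteristic zero, I may shrink each $\Spec A$ to be smooth and connected over $k$. Since $k$ is separably closed, hence algebraically closed, $\Spec A$ has a $k$-point, which gives a section of $X_A \to X$. The statement then amounts to the acyclicity of the smooth morphism $X_A \to X$ for torsion coefficients. This is the smooth base change theorem in the form \enquote{$\rH^i(X \times_k T, F) \cong \rH^i(X, F)$ for $T$ smooth and geometrically connected over algebraically closed $k$ in characteristic zero} (SGA4 XVI), together with the analogous statement for $\pi_0$ and finite étale covers (SGA1 X). In the proper case, I would instead use proper base change directly along $X_K \to X$, together with SGA1 X.1.8 for fundamental groups of proper schemes under extension of separably closed fields. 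Here the coefficients of arbitrary torsion order cause no issue because properness is what is required.

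The main obstacle is (iii) in the non-proper characteristic zero case with arbitrary torsion coefficients, rather than only coefficients of order invertible on~$k$. In characteristic zero all finite orders are invertible, so the hypothesis is exactly what makes the generic smooth base change argument work, but the details of reducing to the case of a smooth connected $\Spec A$ with a rational point, and of passing to the limit compatibly with twisted local systems, need care. For (ii) in the non-proper case I would deduce the result from the $\rH^1$ statement with nonabelian coefficients via the same section argument. Finally, I would check that the Postnikov induction is compatible with the cofiltered limit over $A$; this holds because $B$ is $\pi$-finite, hence cocompact in $\pfAni$.
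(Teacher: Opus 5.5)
Your overall architecture is sound: testing against $\pi$-finite $B$, Postnikov induction down to $\pi_0$, finite étale covers and cohomology with locally constant coefficients pulled back from $X$, and continuity in $A$. The proper case (proper base change for the square over $\Spec K\to\Spec k$, plus SGA1~X.1.8) is also fine. Note that the paper gives no argument of its own here and simply cites the reference.

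The gap is in the characteristic-zero step. You claim that the projection $X_A\to X$ is acyclic, i.e.\ that $H^i(X\times_k T,F)\cong H^i(X,F)$ for every smooth geometrically connected $T$. This is false: for $X=\Spec k$ and $A=k[t,t^{-1}]$ one has $H^1(\mathbb{G}_{m,k},\bZ/n)\cong\bZ/n\neq 0$. Smooth morphisms are only \emph{locally} acyclic. The $k$-point of $\Spec A$ only supplies a retraction, which shows that $H^i(X,F)\to H^i(X_A,F)$ is split injective and that $\etfdtlgrp(X_A)\to\etfdtlgrp(X)$ is split surjective. After the colimit over $A$ this gives injectivity in (iii) and surjectivity of $\etfdtlgrp(X_K)\to\etfdtlgrp(X)$. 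It gives neither surjectivity in (iii) nor that every finite étale cover of $X_K$ descends to $X$. Your ``same section argument'' for (ii) has exactly this defect.

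What is missing is a reason why the classes on $X_A$ coming from $\Spec A$ die in the limit. The correct input is smooth base change in its actual form, $g^*Rf_*F\simeq Rf'_*g'^*F$, for $f\colon X\to\Spec k$ qcqs and $g$ smooth. Since $\Spec K=\lim\Spec A$ with each $\Spec A\to\Spec k$ smooth (generic smoothness), continuity of étale cohomology gives base change for $g\colon\Spec K\to\Spec k$ itself. Taking sections over the separably closed $\Spec K$, and using that $k$ is separably closed, yields
$R\Gamma(X_K,g'^*F)\simeq R\Gamma(\Spec K,g^*Rf_*F)\simeq R\Gamma(\Spec k,Rf_*F)=R\Gamma(X,F)$.
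This is precisely your proper-case argument with proper replaced by smooth base change.

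Equivalently, for $F$ killed by $n$, the Künneth formula gives $R\Gamma(X_A,F)\simeq R\Gamma(X,F)\otimes^{\mathrm L}_{\bZ/n}R\Gamma(\Spec A,\bZ/n)$. Then one uses $\colimit_A H^j(\Spec A,\bZ/n)=H^j(\Spec K,\bZ/n)=0$ for $j>0$.

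For (ii), use the non-abelian version of smooth base change in degrees $\le 1$ (or Künneth for $\etfdtlgrp$ in characteristic zero), together with $\etfdtlgrp(\Spec K)=1$. With these replacements the rational point becomes unnecessary and your Postnikov induction goes through.
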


%----------------------------------------------------------------------------------------------------------------------------------
\subsection{Comparison with Artin-Mazur's étale homotopy type} 
\label{rem:comparison-shape-artin-mazur}

A scheme $X$  is \emph{étale locally connected} if every scheme $U$ étale over $X$ is a coproduct of connected schemes.
A locally noetherian scheme is étale locally connected by \cite[\nopp 6.1.9]{EGAI}.
We let $\HC^{\et}_{}(X)$ be the category of étale hypercoverings of~$X$.
This category is simplicially enriched and cofiltered as a simplicial category \cite[\S5]{Hoyois}.

We consider the homotopy category $\Ho(\ssets)$ of simplicial sets with the Kan-Quillen model structure.
The \emph{étale homotopy type functor of Artin and Mazur} \cite{AM} assigns to any étale locally connected scheme $X$ an object $$\Pi^{\mathrm{AM}}(X) \in \Pro(\Ho(\ssets)).$$
The pro-object $\Pi^{\mathrm{AM}}(X)$ 
is indexed by the (cofiltered) homotopy category $\h\!\HC^{\et}_{}(X)$ and the functor sends an étale hypercovering $U_{\!\bullet} \to X$ to the homotopy type of the simplicial set ${\scriptstyle \Pi}(U_{\!\bullet})$, where ${\scriptstyle \Pi}$ is the connected component functor. 

If ${x}$ is a geometric point of $X$, the same construction yields the pointed version
\[
\Pi^{\mathrm{AM}}(X,{x}) \in \Pro(\Ho(\ssets_*))
\]
in the procategory of the homotopy category of pointed simplicial sets. Moreover, Friedlander \cite{Fr82} constructs a natural representative 
$
  \Pi^{\mathrm{Fr}}(X)\in \Pro(\ssets)
$
of $\Pi^{\mathrm{AM}}(X) \in \Pro(\Ho(\ssets))$. 
The \emph{Artin-Mazur étale homotopy groups} $\pi_n^{\mathrm{AM}}(X,{x})$ are the pro-groups $\pi_n\big(\Pi^{\mathrm{AM}}(X,{x}))$.

\begin{remark}
By \cite[Corollary 10.7]{AM}, the first Artin-Mazur étale homotopy (pro-)group $\pi_1^{\mathrm{AM}}(X,{x})$ is 
canonically isomorphic to 
the \emph{groupe fondamental élargi of $(X, x)$}, defined in \cite[X~\S6]{SGA3}.
\end{remark}

We have the following comparison results for homotopy types and homotopy groups.

\begin{proposition}[{\cite[Proposition~5.1]{Hoyois}}] 
\label{hoyois-comp}
Let $X$ be an étale locally connected qcqs scheme (e.g., $X$ noetherian).
Then the étale homotopy type $\Pi(X)\in \Pro(\Ani)$ is corepresented by the simplicially enriched cofiltered diagram
\[
      \HCet(X) \xlongrightarrow{{\scriptstyle \Pi}} \ssets,\quad  U_{\bullet} \mapsto {\scriptstyle \Pi} (U_{\bullet}).
\]
Consider the natural functor of (1-) categories:
\[
	\h(\Pro (\Ani)) \longrightarrow \Pro(\h\!\Ani)) = \Pro(\Ho(\ssets)).
\]
Then $\Pi(X)$ on the left maps to $\Pi^{\mathrm{AM}}(X)$ on the right. 
\end{proposition}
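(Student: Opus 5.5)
The statement is Hoyois's comparison \cite[Proposition~5.1]{Hoyois}, and the plan is to make the hypercovering description of the shape explicit and then pass to homotopy categories.

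\emph{Step 1: identify $\Pi(X)$ as a functor.} By \Cref{rec:shape}, $\Pi(X)=\Shape(\ethyptopos{X})$ is the pro-anima corepresenting
\[
B\mapsto \globsec(\ethyptopos{X},\globsec^{*}B).
\]
So it suffices to compute global sections of constant hypersheaves. Write $B$ as a Kan complex.

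\emph{Step 2: compute global sections via hypercoverings.} Hypercomplete sheaves satisfy descent along étale hypercoverings. Hence for any $U_\bullet\in\HCet(X)$ there is a map
\[
\map({\scriptstyle \Pi}(U_\bullet),B)\simeq \lim_{\Delta}\map({\scriptstyle \Pi}(U_n),B)\to \lim_\Delta \globsec(U_n,\globsec^*B)\simeq\globsec(X,\globsec^*B).
\]
Here we use that $X$ is étale locally connected. Each $U_n$ is then a coproduct of its connected components, and on a connected étale $X$-scheme the sections of the constant sheaf $\globsec^*B$ receive a map from $B$ itself.

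\emph{Step 3: pass to the colimit over hypercoverings.} Taking the filtered colimit over the cofiltered simplicial category $\HCet(X)$ gives a map
\[
\colimit_{U_\bullet}\map({\scriptstyle \Pi}(U_\bullet),B)\to\globsec(X,\globsec^*B).
\]
One shows this is an equivalence by a Verdier hypercovering argument. First, $\globsec^*B$ is the hypersheafification of the presheaf constant on connected objects. Second, sections of a hypersheafification are computed as the colimit over hypercoverings of the cosimplicial limits. The standard theory of hypercoverings in $\infty$-topoi (Dugger–Hollander–Isaksen, Lurie HTT~7.2.1) identifies this colimit with sections of the hypersheafification.

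\emph{Main obstacle.} The delicate point is making the colimit over $\HCet(X)$ homotopically meaningful. $\HCet(X)$ is only cofiltered as a simplicial category, so one must either pass to its $\infty$-categorical localization or use Hoyois's observation that the simplicial homotopies make the diagram cofiltered in the $\infty$-sense. This is the step I expect to be hardest.

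\emph{Step 4: compare with Artin–Mazur.} Apply the functor $\h(\Pro(\Ani))\to\Pro(\h\!\Ani)$, which sends a pro-object to the same diagram with objects viewed in the homotopy category. The simplicial homotopies between refinement maps become equalities there, so the indexing category becomes $\h\!\HC^{\et}(X)$. The diagram is then exactly Artin–Mazur's $U_\bullet\mapsto{\scriptstyle\Pi}(U_\bullet)$, giving $\Pi^{\mathrm{AM}}(X)$.
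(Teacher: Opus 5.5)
Your outline is correct and follows essentially the argument behind \cite{Hoyois}; the paper gives no proof of its own and simply cites it. That argument likewise writes $\Gamma^{*}B$ as the hypersheafification of $U \mapsto \map({\scriptstyle \Pi}(U), B)$, computes its global sections over $\HCet(X)$ by the Verdier hypercovering theorem of Dugger--Hollander--Isaksen, treats the $\infty$-categorical cofilteredness of $\HCet(X)$ as the genuinely technical input, and then passes to homotopy categories (minor slip: the hypercovering results of \cite{HTT} you want are in \S 6.5.3, not \S 7.2.1).
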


In particular, the homotopy (pro)\-groups of $\Pi(X)$ coincide with the Artin-Mazur homotopy groups of $X$. If $X$ is connected and geometrically unibranch (e.g., normal), the Artin-Mazur étale homotopy pro-groups are profinite by \cite[Theorem 11.1]{AM}. 

\begin{proposition} 
\label{prop:profiniteness} 
Let $(X,{x})$ be a geometrically pointed, connected, étale locally connected qcqs scheme which is geometrically unibranch. 
Then, for all $n$,  we have natural isomorphisms
\[
	\pi_n^{\mathrm{AM}}(X,{x})= \pi_n(\Pi(X),*_x)\isomto \hat\pi_n(X, x).
\]
\end{proposition}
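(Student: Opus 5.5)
The plan is to prove the statement in two steps: first identify $\pi_n^{\mathrm{AM}}(X,x)$ with $\pi_n(\Pi(X),\point_x)$, then compare homotopy groups of $\Pi(X)$ with those of its profinite completion $\ethtpytype(X)$.

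For the first identification I use \cref{hoyois-comp}. It says that the image of $\Pi(X)$ in $\Pro(\h\!\Ani)$ is $\Pi^{\mathrm{AM}}(X)$. Homotopy pro-groups of a pro-anima depend only on this image, together with the pointing. So I need to check that the base point $\point_x$ of $\Pi(X)$ corresponds to the Artin--Mazur pointing coming from $x$. Both are induced functorially by $x\colon \Spec(\Omega)\to X$, and $\Pi(\Spec\Omega)\simeq \point$ because $\Omega$ is separably closed, so the pointed version of Hoyois' comparison applies levelwise over $\HCet(X)$. This gives $\pi_n^{\mathrm{AM}}(X,x)=\pi_n(\Pi(X),\point_x)$ as pro-groups.

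For the second comparison, the map $\Pi(X)\to \pfcompl[\Pi(X)]=\ethtpytype(X)$ induces $\pi_n(\Pi(X),\point_x)\to \ethtpygrp[n][X,x]$. By \cite[Theorem 11.1]{AM}, since $X$ is connected and geometrically unibranch, the pro-groups $\pi_n^{\mathrm{AM}}(X,x)$ are pro-finite: pro-systems of finite groups, with $\pi_1$ profinite and the higher $\pi_n$ finite $\pi_1$-modules levelwise. I then invoke the appendix fact that a connected pro-anima whose homotopy pro-groups are all pro-finite is already profinite, i.e.\ the completion map is an equivalence on homotopy pro-groups.

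To prove that fact I would argue by Postnikov induction. Write $\Pi(X)$ as the limit of its truncations $\tau_{\le n}\Pi(X)$. Each stage is an extension of $\tau_{\le n-1}$ by a $\K(\pi_n,n)$-fibration, classified by a local system of pro-finite groups. Profinite completion preserves such fibre sequences when the fibres are $\pi$-finite and the base is profinite, and when the $\pi_1$-action factors through a finite quotient. Levelwise finiteness supplies exactly these conditions. Hence the completion map is an equivalence on each truncation, and therefore on $\pi_n$.

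The main obstacle is this last step: profinite completion does not commute with fibre sequences in general. That is exactly why geometric unibranchness is needed, since it guarantees that $\pi_1$ is profinite and acts through finite quotients; without it the "groupe élargi" phenomenon breaks the comparison. The first identification is essentially formal by comparison.
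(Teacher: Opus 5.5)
Your two inputs, Hoyois' comparison (with $\point_x$ matching the Artin--Mazur base point because $\Pi(\Spec\Omega)\simeq\point$) and \cite[Theorem 11.1]{AM}, are exactly the paper's. The completion step, however, does not stand as written.

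\begin{itemize}
\item The appendix contains no result of the kind you invoke, and its first formulation is false. A representing diagram $\prolimit_i T_i$ of $\Pi(X)$ has levels that need not be truncated, so $\Pi(X)$ is in general not in $\Pro(\Ani_{\pi})$; the paper says so explicitly.
\item For the same reason, $\Pi(X)$ is not the limit of its truncations in $\Pro(\Ani)$; only $\tau_{<\infty}\Pi(X)$ is. This is harmless for $\pi_n$ and for $(\blank)^\wedge_\pi$, but it has to be said.
\item More seriously, your Postnikov sketch starts from the hypothesis that the homotopy pro-groups are pro-finite, and then uses ``levelwise finiteness''. Passing from the former, a statement about pro-groups up to pro-isomorphism, to a representing diagram of the Postnikov stages with levelwise finite homotopy groups is not formal. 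The claim that completion preserves the relevant fibre sequences is asserted, not proved.
\end{itemize}

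What the paper extracts from \cite[Theorem 11.1]{AM} together with Hoyois is the stronger statement that $\Pi(X,x)\simeq\prolimit_i T_i$, where the $T_i$ are connected pointed anima with \emph{finite} homotopy groups. With that in hand, no induction and no fibre-sequence argument is needed. Each $\tau_{\le j}T_i$ is $\pi$-finite, and $\map(T_i,K)\simeq\map(\tau_{\le m}T_i,K)$ for every $m$-truncated $\pi$-finite anima $K$. Hence one has directly
\[
\Pi(X)^\wedge_\pi \simeq \prolimit_{i,j}\tau_{\le j}T_i .
\]
Since $\pi_n(\tau_{\le j}T_i)=\pi_n(T_i)$ for $j\ge n$, this gives
\[
\hat\pi_n(X,x)=\prolimit_{i,j}\pi_n(\tau_{\le j}T_i)=\prolimit_i\pi_n(T_i)=\pi_n^{\mathrm{AM}}(X,x).
\]
In particular, the ``main obstacle'' you single out, completion versus fibre sequences, never arises. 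The only non-formal input is the levelwise finiteness supplied by geometric unibranchness.
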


\begin{proof} 
By \cite[Theorem 11.1]{AM} and \cref{hoyois-comp}, $\Pi(X,{x})\in \Pro(\Ani_*)$ is corepresented by a diagram $\prolimit_i T_{\!i}$ of the form  $\catI \to \Ani_*$, $i\mapsto T_{\!i}$, with connected pointed anima $T_{\!i}$ with finite homotopy groups.
Since the $T_{\!i}$'s are not necessarily truncated, $\Pi(X)$ is not necessarily an object of $\Pro(\Ani_{\pi,*})$. 
Nevertheless, the profinite completion $T^\wedge_\pi$ of $T$ is then prorepresented by the system 
\[
	T^\wedge_\pi = \prolimit_{i,j} \tau_{\le j} T_{\! i}
\]
of the various truncations (coskelata).
Since $\pi_n(T_{\!i}) = \pi_n(\tau_{\le j} T_{\! i})$ for $j\ge n$, we obtain
\[
	\pi_n^{\mathrm{AM}}(X,{x}) = \prolimit_i \pi_n(T_{\! i}) = \prolimit_{i,j} \pi_n(\tau_{\le j}T_{\! i})= \hat\pi_n(X, x). 
	\qedhere
\]
\end{proof}

%%% Local Variables:
%%% mode: LaTeX
%%% TeX-master: "../haupt"
%%% End:

%% file: content/K-pi-1-schemes.tex
The main protagonists of our result in anabelian geometry are hyperbolic curves.
We will explain in  \cref{sec:BG} that hyperbolic curves are $\K(\pi,1)$-spaces. 
This is well known at various technical and terminological levels, but we formulate and prove it here for the profinite \'etale homotopy type as a profinite anima and with respect to the natural definition in \'etale homotopy theory.
This plays a crucial role in \cref{sec:spreading-out-homotopies}, where we extend homotopies from the generic fibre to the entire scheme.

%----------------------------------------------------------------------------------------------------------------------------------
\subsection{Finite étale covers and classifying anima}

We start by relating maps to a classifying anima $\B\!G$ and finite \'etale covers. 
For the necessary results and notation on classifying anima we refer to \cref{subsec:BG}.  In particular, we denote by 
$E\modmod G$ the \emph{homotopy quotient} of $E$ by $G$.

\begin{proposition} 
\label{lem:finite-etale-galois-cartesian-square}
Let $(X, x)$ be a pointed connected qcqs scheme, and let $f \from Y \to X$ be a finite \'etale cover.
Let $\ph \colon \etfdtlgrp(X, x) \surj G$ be a finite quotient such that the action of $\etfdtlgrp(X, x)$ on the fibre $Y_{\!x}$ factors through $G$.
Then there is a natural cartesian square 
\[
	\begin{tikzcd}
      	\ethtpytype(Y) \arrow[d, "\ethtpytype(f)"'] \arrow[r, "\psi"] 
	&  \ethtpytype(Y_{\!x}) \modmod G  \arrow[d,"\iota"] 
	\\
      	\ethtpytype(X) \arrow[r, "\ph"]  
	& \B\!G ,
    	\end{tikzcd}
\]
where $\iota$ is induced by the $G$-equivariant constant map 
$Y_{\!x} = f^{-1}( x) \to  x$ in view of $\B\!G = \, \point\! \modmod G$.
\end{proposition}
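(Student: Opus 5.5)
We first construct the square at the level of étale topoi, then pass to shapes and profinite completions, and finally identify the fibre product.

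\emph{Construction of $\ph$ and $\psi$.} The quotient $\ph \colon \etfdtlgrp(X,x) \surj G$ corresponds to a connected Galois cover $P \to X$ with group $G$. Its classifying map $\ettopos{X} \to \Sh(\B\!G)$ induces, on shapes, a map $\Pi(X) \to \B\!G$. Since $\B\!G$ is $\pi$-finite, this map factors uniquely through $\ethtpytype(X)$, and we call the result $\ph$. Because the monodromy on $Y_{\!x}$ factors through $G$, we get
\[
Y \cong (P \times Y_{\!x})/G = P \times^{G} Y_{\!x},
\]
where $Y_{\!x}$ is viewed as a finite $G$-set. This exhibits $\ettopos{Y}$ as the slice topos $\overcat{(\ettopos{X})}{h_Y}$, with $h_Y$ the sheaf represented by $Y$. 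Moreover, $h_Y$ is the pullback along $\ettopos{X} \to \Sh(\B\!G)$ of the $G$-set $Y_{\!x}$, viewed as an object over the terminal object. Taking shapes of slice topoi, we obtain $\psi$, where $\ethtpytype(Y_{\!x})$ is just the finite set $Y_{\!x}$. The square commutes because $\iota$ is induced by $Y_{\!x} \to \point$.

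\emph{Cartesianness.} We use the standard fact that, for an $\infty$-topos $\topos{X}$ and a locally constant object $F$ classified by a map $\Shape(\topos{X}) \to \B\!G$, the square formed by $\Shape(\overcat{\topos{X}}{F})$, $\Shape(\topos{X})$, $Y_{\!x}\modmod G$ and $\B\!G$ is cartesian in $\Pro(\Ani)$. To prove this, note that the shape of the slice is computed by the left fibration (or local system) on $\Shape(\topos{X})$ given by $F$. In pro-terms, this means writing $\Pi(X) = \prolimit_i T_i$ with compatible maps $T_i \to \B\!G$ for large $i$. Then
\[
\Pi(Y) = \prolimit_i \bigl(T_i \times_{\B\!G} Y_{\!x}\modmod G\bigr).
\]
This can be checked on corepresented functors, using $\map(\Pi(Y), B) = \Gamma(Y, B)$. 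One restricts to $X$ via $f_*$ and uses étale descent along $P \to X$: $Y$ is the quotient of the disjoint union $\coprod_{Y_{\!x}} P$ by $G$. In other words, the Čech nerve of $P \to X$ computes both sides.

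\emph{Passing to profinite completions.} Profinite completion does not commute with fibre products in general. Here, however, the map $Y_{\!x}\modmod G \to \B\!G$ has finite discrete fibres, i.e.\ it is a finite covering. Profinite completion commutes with pullbacks along $\pi$-finite maps with base in $\Ani_\pi$: each $T_i \times_{\B\!G} (Y_{\!x}\modmod G) \to T_i$ is a finite cover, and a finite cover of a $\pi$-finite anima is $\pi$-finite. Cofinality arguments (cf.\ the appendix on $\Pro(\Ani_\pi)$) then give
\[
\ethtpytype(Y) \simeq \ethtpytype(X) \times_{\B\!G} \bigl(Y_{\!x}\modmod G\bigr).
\]

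I expect the main obstacle to be this compatibility of profinite completion with pullback along finite covers, applied to the non-$\pi$-finite pro-system $\Pi(X)$. Concretely, one must check that for $B \in \Ani_\pi$ over $Y_{\!x}\modmod G$ the relevant mapping anima agree. I would try first to reduce, via the finite Galois cover $P$ and the base change $\Pi(P) = \Pi(X) \times_{\B\!G} \point$, to the statement for the $G$-torsor $P \to X$. That case follows from taking homotopy $G$-orbits, which commute with profinite completion for finite $G$.
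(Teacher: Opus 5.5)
There is a genuine gap in the passage to profinite completions, exactly at the step you flag. Your overall plan can be completed, but the justification you give there does not apply.

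\textbf{Why the stated argument fails.} In $\Pi(X)=\prolimit_i T_i$ the $T_i$ are not $\pi$-finite; otherwise $\Pi(X)$ would already be profinite and there would be nothing to prove. So the remark that finite covers of $\pi$-finite anima are $\pi$-finite says nothing about $(T_i\times_{\B\!G}E)^\wedge_\pi$, where $E=\ethtpytype(Y_{\!x})\modmod G$. No cofinality argument runs without a further input.

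\textbf{The missing idea.} For a finite covering $p\colon E\to K$ of $\pi$-finite anima, the pullback $p^*=E\times_K(-)$ has a right adjoint $p_*$, the dependent product. Its fibre over $k$ is $\prod_{e\in p^{-1}(k)}B_e$, so $p_*$ preserves $\pi$-finite anima. For $T\in\Pro(\Ani)$ over $K$ and $\pi$-finite $B\to E$ this gives
\[
\map_{E}\big((T\times_K E)^\wedge_\pi,B\big)\simeq\map_{K}(T,p_*B)\simeq\map_{K}(T^\wedge_\pi,p_*B)\simeq\map_{E}(T^\wedge_\pi\times_K E,B),
\]
which is the compatibility you need. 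Equivalently, it is the input that makes a cofinality argument work.

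The same adjunction also fills a silent step at the shape level. The claim ``the \v{C}ech nerve of $P\to X$ computes both sides'' needs $(-)\times_{\B\!G}E$ to commute with geometric realisations in $\Pro(\Ani)$. That is not automatic there, since colimits in $\Pro(\Ani)$ are not computed levelwise. Because $(-)\times_{\B\!G}E$ has a right adjoint on $\Pro(\Ani)_{/\B\!G}$, it does commute with them.

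\textbf{Comparison with the paper.} The paper never leaves $\Pro(\pifinAni)$, and your closing fallback is essentially its proof. Since $(-)^\wedge_\pi$ is a left adjoint, $\ethtpytype$ itself sends the \v{C}ech nerve of a finite étale covering to a colimit diagram (\cref{rec:proetale-hyperdescent}). In $\Pro(\pifinAni)$ geometric realisations are universal, and the homotopy quotient gives the Galois correspondence between profinite $G$-anima and profinite anima over $\B\!G$ (\SAG{}{E.6.5.1}). Concretely, the paper proceeds in three steps.
\begin{enumerate}
\item It reduces to $Y$ connected with stabiliser $H$, so that $\ethtpytype(Y_{\!x})\modmod G=\B\!H$.
\item It reduces to $H=1$ by pasting and cancellation of pullbacks (\cref{cor:cancellation of pullbacks}) along $\ethtpytype(Z)\to\ethtpytype(Y)\to\ethtpytype(X)$ over $\point\to\B\!H\to\B\!G$. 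Here $Z$ is your Galois cover $P$.
\item In the Galois case it identifies $\ethtpytype(\cechnerve_X(Y)_\bullet)$ with $\sBar_G(\ethtpytype(Y),\point)_\bullet$. The realisation of this is $\ethtpytype(Y)\modmod G$, while descent identifies it with $\ethtpytype(X)$.
\end{enumerate}
With the dependent-product lemma your route also goes through. The paper's route, however, needs no comparison between profinite completion and fibre products at all.
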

\begin{proof} 
Since $X$ is connected and $Y \to X$ is finite \'etale, $Y$ is a finite disjoint union of finite connected \'etale covers of $X$. The assertion is compatible with finite disjoint unions in $Y$, and thus we may assume that $Y$ is also connected. 

We next fix a point $ y \in Y_{\! x}$.
Since $Y$ is connected, $G$ acts transitively on $Y_{\! x}$ with stabilizer $H \subseteq G$ of $y$ identified with the image of $\psi = \ph \circ \pi_1(f) \colon \etfdtlgrp(Y, y) \to \etfdtlgrp(X, x) \to G$.
Because $Y_{\!x}$ is a finite disjoint union of geometric points, we have an equivalence $\ethtpytype(Y_{\!x}) = G/H$, and therefore $\ethtpytype(Y_{\!x})  \modmod G = (G/H)\modmod G = \B\!H$.
Hence the square of the proposition can be written in the form 
\[
	\begin{tikzcd}
      	\ethtpytype(Y) \arrow[r,"\psi"] \arrow[d, "\ethtpytype(f)"'] 
	& \B\!H\arrow[d] 
	\\
     	\ethtpytype(X) \arrow[r, "\ph"] 
	& \B\!G.
    	\end{tikzcd}
\]
We start by reducing the assertion to the case $H = 1$.
Let $(Z, z)$ denote the pointed finite \'etale cover of $X$ associated with the action of $\etfdtlgrp(X, x)$ on $G$ via $\ph$.
By construction, we have maps $Z \to Y \to X$, and the image of $\etfdtlgrp(Z, z)$ in $H$, resp.\ in $G$, is trivial.
Assuming the trivial case (twice), in the diagram
\[
	\begin{tikzcd}
      	\ethtpytype(Z) \arrow[r] \arrow[d] 
	& \ethtpytype(Y) \arrow[r] \arrow[d] 
	& \ethtpytype(X) \arrow[d] 
	\\
      	\point \arrow[r] 
	& \B\!H \arrow[r] 
	& \B\!G
    	\end{tikzcd}
\]
the left and total rectangle are cartesian. Therefore, the right rectangle is also cartesian by pasting of pullbacks, see \Cref{cor:cancellation of pullbacks}~\ref{coritem:pullback-cancellation}.

We may now assume $H = 1$, i.e., the cover $Y \to X$ is Galois with Galois group $G$. 
 By \SAG{}{E.6.5.1}, the profinite variant of \cref{rec:pointed-vs-unpointed} \ref{homquot}, we may equivalently show that the action of $G = \opGal(Y/X)$ on $\ethtpytype(Y)$ induced by the evident action from geometry induces an equivalence
\[
	\ethtpytype(Y)\modmod G \isomto \ethtpytype(X).
\]
To this end, note that using the constant group scheme $G_X = \bigsqcup_G X$,  we can identify
\[
 	\cechnerve_X(Y)_k = \bigtimes_X^{k+1}Y  = Y \times_X \bigtimes_X^{k} G_{X}  = \bigsqcup_{G^k} Y .
\]
Since the \'etale homotopy type preserves coproducts, this shows that we can identify the homotopy type of the \v Cech-nerve
\[
	\ethtpytype(\cechnerve_X(Y)_\bullet) = \sBar_G(\ethtpytype(Y), \point)_{\bullet}
\]
with the two-sided Bar construction $\sBar_G$.
By \SAG{}{E.6.4.1} and \SAG{}{E.6.4.3}, the colimit of $\sBar_G(\ethtpytype(Y), \point)_{\bullet}$ computes   
$\ethtpytype(Y) \modmod G$.
Now observe that $f \from Y \to X$ is an étale cover of the connected scheme $X$, hence an étale covering. We conclude that the canonical map
\[
	\colimit_{\simplex^{\op}} \ethtpytype(\cechnerve_X(Y)) \isomto \ethtpytype(X)
\]
is an equivalence as desired.
\end{proof}

\begin{definition}
  \label{def:etale-n-truncated}
  Let $X$ be a qcqs scheme and $n \geq -1$.
  \begin{deflist}
    \item We write $\ethtpytype_{\leq n}(X)$ for the truncation $\trunc_{\leq n}\! \ethtpytype(X)$ and refer to $\ethtpytype_{\leq 1}(X)$ as the \emph{profinite étale fundamental group\-oid} of $X$.
    \item \label{defitem:etale-n-truncated} We say $X$ is \emph{profinite étale $n$-truncated} if $\ethtpytype(X)$ is an $n$-truncated object of $\Pro(\pifinAni)$.
  \end{deflist}
\end{definition}

\begin{proposition}
  \label{prop:finite-etale-cartesian-squares}
  Let $f \from Y \to X$ be a finite \'etale morphism of qcqs schemes.
  Then the square
  \[
    \begin{tikzcd}
      \ethtpytype(Y) \arrow[r, "{ \ethtpytype(f)}"] \arrow[d] & \ethtpytype(X) \arrow[d] \\
      \ethtpytype_{\leq 1}(Y) \arrow[r, "{\ethtpytype_{\leq 1}(f)}"'] & \ethtpytype_{\leq 1}(X)
    \end{tikzcd}
  \]
   is cartesian, where the vertical maps are the natural unit maps.
\end{proposition}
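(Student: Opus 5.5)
We reduce to the situation of \cref{lem:finite-etale-galois-cartesian-square}. First, both sides are compatible with finite disjoint unions. A qcqs scheme need not have finitely many connected components, so we cannot simply split $X$ into connected pieces. Instead we argue fibrewise over points of $\ethtpytype_{\le 1}(X)$: a square in $\Pro(\pifinAni)$ is cartesian if it becomes cartesian after pulling back along each point (or along each component) of the base. A cleaner route is to note that $\ethtpytype$ commutes with cofiltered limits (\cref{rec:continuity-of-the-etale-homotopy-type}) and that finite étale $Y\to X$ descends along noetherian approximation. We may therefore assume $X$ noetherian, hence with finitely many connected components. Truncation $\tau_{\le 1}$ commutes with finite coproducts, so we may assume $X$ connected, pick a geometric point $x$, and then $\ethtpytype_{\le 1}(X)\simeq \B\etfdtlgrp(X,x)$.

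Next, choose a finite quotient $\ph\colon \etfdtlgrp(X,x)\surj G$ through which the action on $Y_{\!x}$ factors. \Cref{lem:finite-etale-galois-cartesian-square} gives the cartesian square with $\ethtpytype(X)\to \B G$ and $\ethtpytype(Y)\to \ethtpytype(Y_{\!x})\modmod G$. The map $\ethtpytype(X)\to \B G$ factors through the unit $\ethtpytype(X)\to\ethtpytype_{\le1}(X)$, since $\B G$ is $1$-truncated. The key observation is that the analogous square
\[
\begin{tikzcd}
\ethtpytype_{\le1}(Y)\arrow[r]\arrow[d] & \ethtpytype(Y_{\!x})\modmod G\arrow[d]\\
\ethtpytype_{\le1}(X)\arrow[r] & \B G
\end{tikzcd}
\]
is also cartesian. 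For this, write $Q$ for the pullback of $\ethtpytype(Y_{\!x})\modmod G\to \B G$ along $\ethtpytype_{\le1}(X)\to\B G$. Since $\ethtpytype(Y_{\!x})\modmod G\to\B G$ has discrete (finite set) fibres, $Q$ is $1$-truncated. The map $\ethtpytype(Y)\to Q$ obtained from the first square is therefore the base change of the unit $\ethtpytype(X)\to\ethtpytype_{\le1}(X)$ along a $0$-truncated map. So it suffices to check that this base change is again a $1$-truncation, i.e.\ that $Q\simeq \ethtpytype_{\le1}(Y)$ under $\ethtpytype(Y)$.

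To establish this, compute on each component: $Y$ connected gives $Q\simeq \B\!\big(\ph'^{-1}(H)\big)$ in the notation of the lemma's proof, where $\ph'$ is the map $\etfdtlgrp(X,x)\to G$ and $H$ is the stabiliser. This open subgroup is exactly $\etfdtlgrp(Y,y)$ by Galois theory, compatibly with $\ethtpytype(Y)\to Q$ inducing the identity on $\pi_1$ and $\pi_0$. Since $Q$ is $1$-truncated, the map $\ethtpytype(Y)\to Q$ is the $1$-truncation. Pasting the two cartesian squares (\cref{cor:cancellation of pullbacks}) then yields the claim: the outer square from \cref{lem:finite-etale-galois-cartesian-square} is cartesian, and the right-hand square just shown is cartesian, so the left-hand square $\ethtpytype(Y)\to\ethtpytype(X)$ over $\ethtpytype_{\le1}(Y)\to\ethtpytype_{\le1}(X)$ is cartesian.

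I expect the main obstacle to be the identification $Q\simeq\ethtpytype_{\le1}(Y)$ in $\Pro(\pifinAni)$. Profinite anima are pro-objects, so homotopy groups are profinite and "$\pi_1$ and $\pi_0$ isomorphism between $1$-truncated objects implies equivalence" must be invoked from the appendix's Whitehead-type statement for profinite anima. A second delicate point is the initial reduction to finitely many connected components. I would try to avoid it altogether by working in $\Pro(\pifinAni)_{/\ethtpytype_{\le1}(X)}$ and arguing after pulling back along each point of the profinite groupoid.
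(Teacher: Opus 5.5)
Your proof is correct. It shares the noetherian reduction with the paper but takes a different route in the key step.

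\emph{Reduction.} In the noetherian approximation you should say explicitly that $\trunc_{\leq 1}$ and pullbacks commute with cofiltered limits (\cref{cor:n-truncation-profinite-anima}); the paper states this. Your closing suggestion to bypass the reduction by pulling back along points of the profinite groupoid is unnecessary, because the reduction works as written.

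\emph{The paper's route.} It reduces to $X$ and $Y$ both connected. It then applies \cref{lem:finite-etale-galois-cartesian-square} to the whole projective system of finite quotients $\ph_\alpha\colon \etfdtlgrp(X,x)\surj G_\alpha$, with $H_\alpha$ the images of $\etfdtlgrp(Y,y)$. Passing to the cofiltered limit gives a cartesian square over $\B\!\etfdtlgrp(Y,y)\to \B\!\etfdtlgrp(X,x)$, which it identifies with the truncations.

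\emph{Your route.} You use a single finite quotient $G$ and factor the Galois square through $\ethtpytype_{\leq 1}(X)$. You then identify the pullback $Q=\ethtpytype_{\leq 1}(X)\times_{\B\!G}\big(\ethtpytype(Y_{\!x})\modmod G\big)$ with $\ethtpytype_{\leq 1}(Y)$ directly:
\begin{itemize}
\item $Q$ is $1$-truncated, being $0$-truncated over a $1$-truncated base.
\item On each component, the long exact sequence gives $\pi_1(Q)=\ph^{-1}(H)$.
\item By Galois theory, $\ph^{-1}(H)$ is the image of $\etfdtlgrp(Y,y)$.
\item The profinite Whitehead theorem then makes $\ethtpytype_{\leq 1}(Y)\to Q$ an equivalence.
\end{itemize}
Pasting via \cref{cor:cancellation of pullbacks} concludes. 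This step is in fact already contained in your observation that $\ethtpytype(Y)\simeq \ethtpytype(X)\times_{\ethtpytype_{\leq 1}(X)} Q$.

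\emph{Comparison.} Your approach avoids two things the paper needs: assembling the Galois squares into a coherent system indexed by $\alpha$ (naturality of the lemma in $G$), and the identification $\etfdtlgrp(Y,y)=\lim_\alpha H_\alpha$. It also does not require $Y$ to be connected in advance. The price is an explicit homotopy-group computation for $Q$. You also need a short check that the comparison map $\ethtpytype_{\leq 1}(Y)\to Q$ lies over $\ethtpytype_{\leq 1}(f)$, which follows from the universal property of $\trunc_{\leq 1}$. The paper's argument is shorter to write because the cofiltered limit performs the comparison with $\B\!\etfdtlgrp$ automatically. Both arguments ultimately rely on the Whitehead theorem to identify $\ethtpytype_{\leq 1}$ of a connected scheme with $\B\!\etfdtlgrp$.
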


\begin{proof}
  By noetherian approximation, we write $f = \limit_i f_{\! i}$ as a cofiltered limit of finite \'etale morphisms $f_{\! i} \from Y_{\! i} \to X_i$ between schemes of finite type over $\bZ$.
  Then, by \Cref{rec:continuity-of-the-etale-homotopy-type}, we have that also $\ethtpytype(f) = \limit_i  \ethtpytype(f_{\!i})$.
  Since pullbacks as well as $\trunc_{\leq 1}$ commute with cofiltered limits, we are thus reduced to the case that $Y$ and $X$ are noetherian.
  In this case, both $Y$ and $X$ have finitely many connected components.

  Since $\ethtpytype(X) \to \ethtpytype_{\leq 1}(X)$ and $\ethtpytype(Y) \to \ethtpytype_{\leq 1}(Y)$ are bijective on the respective sets of connected components, we can treat each connected component separately and hence assume both $Y$ and~$X$ to be connected.

  Let ${y}$ be a geometric point of $Y$, write $x$ for its image under $f$ and endow $\ethtpytype(Y)$ and $\ethtpytype(X)$ with the induced points.
  We write $\etfdtlgrp(X, x) = \prolimit_\alpha G_\alpha$ as a surjective projective system of finite quotients $\ph_\alpha \colon  \etfdtlgrp(X, x)  \to G_\alpha$, which we may moreover assume to all be large enough such that the action of $\etfdtlgrp(X, x)$ on the fibre $f^{-1}( x)$ factors through $G_\alpha$. Let $i_\alpha: H_\alpha \to  G_\alpha$ denote the inclusion of the image of $\psi_\alpha = \ph_\alpha \circ \etfdtlgrp(f) \colon \etfdtlgrp(Y, y) \to \etfdtlgrp(X, x) \to G_\alpha$.
  Then $\etfdtlgrp(Y, y) = \prolimit_\alpha H_\alpha$ and for all $\alpha$ 
\[
    \begin{tikzcd}
      \ethtpytype(Y) \arrow[r, "\ethtpytype(f)"] \arrow[d, "\psi_\alpha", swap] & \ethtpytype(X) \arrow[d, "\ph_\alpha"] \\
      \B\!H_\alpha \arrow[r, "{\B\! i_\alpha}"'] & \B\!G_\alpha 
    \end{tikzcd}
\]
is cartesian by \cref{lem:finite-etale-galois-cartesian-square}. Since pullbacks commute with cofiltered limits, we obtain a cartesian diagram
\[
    \begin{tikzcd}
      \ethtpytype(Y) \arrow[r, "\ethtpytype(f)"] \arrow[d] & \ethtpytype(X) \arrow[d] \\
      \B\! \etfdtlgrp(Y, y)  \arrow[r, "{\B\!\etfdtlgrp(f)}"'] & \B\!\etfdtlgrp(X, x)  .
    \end{tikzcd}
\]
 Since $\B\!\etfdtlgrp(Y, {y})$ and $\B\!\etfdtlgrp(X, x)$ are $1$-truncated, these maps factor through $\ethtpytype_{\leq 1}(Y)$ and $\ethtpytype_{\leq 1}(X)$, respectively.
 As moreover both $Y$ and $X$ are connected, the canonical maps~$\ethtpytype_{\leq 1}(Y) \to \B\!\etfdtlgrp(Y, {y})$ and $\ethtpytype_{\leq 1}(X) \to \B\!\etfdtlgrp(X, x)$ are  equivalences.
 This finishes the proof.
\end{proof}

%----------------------------------------------------------------------------------------------------------------------------------
\subsection{\texorpdfstring{Profinite étale $\K(\pi,1)$ spaces}{Profinite étale K(π,1) spaces}}

\begin{definition}  
  \label{def:profinite-etale-K-pi-1}
  We say that a qcqs scheme $X$ is a \defding{profinite \'etale $\K(\pi, 1)$} if $X$ is connected and profinite \'etale $1$-truncated in the sense of \Cref{def:etale-n-truncated}~\labelcref{defitem:etale-n-truncated}.
\end{definition}

\begin{definition}
  \label{def:universal-cover}
  Let $X$ be a connected qcqs scheme and $x \to X$ a geometric point.
We write $\FEt_{(X, x)}$ for the category of \emph{pointed} connected finite étale covers of $X$.
The \emph{universal cover} of $(X, x)$ is the limit
      \[
        \widetilde{X}^{(x)} = \limit_{(Y, {y}) \in \FEt_{(X, x)}} Y.
      \]
     We often suppress the base point from the notation and simply write $\widetilde{X}$.
\end{definition}

The object $\widetilde{X}$ is endowed with a canonical geometric point $\tilde{x}$ of $\widetilde{X}$.
The pair (pro)represents the fibre functor $F_{\!x}$ in $ x$ on the Galois category of all finite étale covers of $X$.
Thus
\[
  \etfdtlgrp(X, x) = \Aut(F_{ x}) = \Aut^{\mathrm op}(\widetilde X/X),
\]
and so $\widetilde{X} \to X$ is naturally a $\etfdtlgrp(X, x)$-torsor.

\begin{lemma}
  \label{lem:compatibility-universal-cover}
  Let $X$ be a connected qcqs scheme and let $x \to X$ be a geometric point.
  Then the universal cover $\widetilde{X} \to X$ induces a fibre sequence of profinite anima
  \[
    \begin{tikzcd}
      \ethtpytype(\widetilde{X}) \arrow[r] \arrow[d] \cartesian & \ethtpytype(X) \arrow[d] \\
      \point \arrow[r, "{x}"'] & \ethtpytype_{\leq 1}(X)
    \end{tikzcd}
  \]
\end{lemma}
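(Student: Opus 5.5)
We reduce to \cref{prop:finite-etale-cartesian-squares}, continuity of $\ethtpytype$ from \cref{rec:continuity-of-the-etale-homotopy-type}, and the fact that pullbacks commute with cofiltered limits in $\Pro(\pifinAni)$. Write $\widetilde{X} = \limit_{(Y,y)} Y$ over the cofiltered category $\FEt_{(X,x)}$. The transition maps are finite étale, hence affine, so continuity gives $\ethtpytype(\widetilde{X}) \simeq \limit_{(Y,y)} \ethtpytype(Y)$.

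For each pointed connected finite étale cover $f \colon (Y,y)\to (X,x)$, \cref{prop:finite-etale-cartesian-squares} gives the cartesian square
\[
\ethtpytype(Y) \simeq \ethtpytype(X) \times_{\ethtpytype_{\le1}(X)} \ethtpytype_{\le1}(Y).
\]
Since $X$ and $Y$ are connected, $\ethtpytype_{\le 1}(Y)\simeq \B\etfdtlgrp(Y,y)$ and $\ethtpytype_{\le1}(X)\simeq\B\etfdtlgrp(X,x)$, as in the proof of that proposition. The map between them is $\B$ of the open inclusion $\etfdtlgrp(Y,y)\inj \etfdtlgrp(X,x)$, which is injective because $Y$ is a connected cover. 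Passing to the limit over $\FEt_{(X,x)}$ and using that pullbacks commute with cofiltered limits, we get
\[
\ethtpytype(\widetilde X)\simeq \ethtpytype(X)\times_{\B\etfdtlgrp(X,x)} \limit_{(Y,y)} \B\etfdtlgrp(Y,y).
\]
The limit on the right is computed in $\Pro(\pifinAni)$ along the system of open subgroups $U=\etfdtlgrp(Y,y)$.

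The main step is to show $\limit_U \B U \simeq \point$, the pointed map being the basepoint $x$. Each $\B U \to \B\etfdtlgrp(X,x)$ is a finite covering with fibre $G/U$, where $G=\etfdtlgrp(X,x)$. We write $\B U$ compatibly as pro-objects, for example as $\limit_N \B(U/N)$ over open normal subgroups $N\subseteq U$ of $G$. The double limit is then cofinally indexed by pairs $N\subseteq U$. Restricting to the cofinal diagonal $U=N$ gives $\limit_N \B(N/N)=\point$.

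To make this rigorous, I would instead compare fibres. The map $\limit_U \B U \to \B G$ is the limit of the maps $\B U\to\B G$, whose fibres are the discrete finite sets $G/U$. So the fibre of the limit is $\limit_U G/U$, computed in $\Pro(\pifinAni)$. This limit is the profinite set $G$ itself, and $\point \to \B G$ has fibre $G$ as well. Since both maps into $\B G$ have fibre $G$ with the same $G$-action, the map $\point\to \limit_U\B U$ over $\B G$ is an equivalence.

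I expect this identification to be the main obstacle: handling the limit of $\B U$ in the pro-category and checking that the comparison map is compatible with the chosen basepoints. Pasting the resulting equivalence $\limit_U\B U\simeq\point$ into the displayed pullback yields the claimed fibre sequence with bottom map $x$.
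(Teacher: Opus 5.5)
Your proof is correct. Its first half is exactly what the paper does: take the limit over $\FEt_{(X,x)}$ of the cartesian squares from \cref{prop:finite-etale-cartesian-squares}, using continuity of $\ethtpytype$ and that pullbacks commute with cofiltered limits.

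The two arguments diverge in how they show that the limit of the bottom-left corners is contractible.
\begin{itemize}
\item The paper keeps these corners as $\ethtpytype_{\le 1}(Y)$. It uses that $\tau_{\le 1}$ commutes with cofiltered limits (\cref{cor:n-truncation-profinite-anima}) to identify their limit with $\ethtpytype_{\le 1}(\widetilde X)$. It then concludes by the profinite Whitehead theorem, because $\widetilde X$ is connected and simply connected.
\item You identify the system with $U \mapsto \B U$, where $U = \etfdtlgrp(Y,y)$ runs over the open subgroups of $G = \etfdtlgrp(X,x)$. You then compute $\lim_U \B U \simeq \point$ by comparing fibres over the connected $\B G$. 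The fibre of the limit is $\lim_U G/U \cong G$, and the map from $\fib(\point \to \B G) = G$ is the canonical isomorphism, so \cref{lem:criterion-homotopy-pullback-fibres} applies.
\end{itemize}

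Two small remarks on your write-up. Only the fact that the fibre map is the canonical one matters; the remark about matching $G$-actions is unnecessary. Your first ``cofinal diagonal'' heuristic is in fact also rigorous, once the limit is written over pairs $N \subseteq U$ and one checks that the pairs $(N,N)$ are coinitial.

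What each route buys:
\begin{itemize}
\item Your version is purely group-theoretic. It avoids both the commutation of $\tau_{\le 1}$ with cofiltered limits and any statement about the fundamental group of the non-noetherian scheme $\widetilde X$.
\item The price is that you need the identifications $\ethtpytype_{\le 1}(Y) \simeq \B\etfdtlgrp(Y,y)$ and $\fib(\B U \to \B G) \simeq G/U$ to be natural along the pointed system. This does hold, because $\B$ is an equivalence onto pointed connected $1$-truncated profinite anima.
\item The paper's version is shorter given the appendix. It pushes the remaining content into the geometric fact that $\widetilde X$ is simply connected.
\end{itemize}
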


\begin{proof}
  Using that $\trunc_{\leq 1}$, the étale homotopy type, and pullbacks commute with cofiltered limits (see \Cref{cor:n-truncation-profinite-anima} and \Cref{rec:continuity-of-the-etale-homotopy-type}, respectively), we obtain a pullback square
  \[
    \begin{tikzcd}
      \ethtpytype(\widetilde{X}) \arrow[r] \arrow[d] & \ethtpytype(X) \arrow[d] \\
      \ethtpytype_{\leq 1}(\widetilde{X}) \arrow[r] & \ethtpytype_{\leq 1}(X)
    \end{tikzcd}
  \]
  by using \Cref{prop:finite-etale-cartesian-squares} and passing to the limit with respect to $(Y, {y}) \in \FEt_{(X, x)}$.
Therefore it suffices to show that $\ethtpytype_{\leq 1}(\widetilde{X})$ is contractible, i.e., by the profinite Whitehead theorem \ref{thm:profinite-whitehead-theorem} that $\widetilde{X}$ is simply connected.
This holds by the very construction of $\widetilde{X}$.
\end{proof}

The following proposition collects various useful alternative characterisations of the property \enquote{profinite \'etale $\K(\pi, 1)$}.

\begin{proposition}
\label{prop:criterionKpi1}
  Let $X$ be a connected qcqs scheme with a geometric point $ x$ and universal cover $\widetilde{X}$.
  Then the following are equivalent.
\begin{thmlist}
	\item
	\label{propitem:criterionKpi1classifyingmap}
	The canonical map $\ethtpytype(X) \to \B\!\etfdtlgrp(X, x)$ is an equivalence.
	\item
	\label{propitem:criterionKpi1Kpi1}
	$X$ is a profinite étale $\K(\pi,1)$.
	\item
	\label{propitem:criterionKpi1contractibleuniversalcover}
	$\widehat\Pi(\widetilde{X})$ is contractible.
	\item
	\label{propitem:criterionKpi1cohomology}
	For all finite abelian groups $A$ and all $i \geq 1$ the cohomology group $\rH^i(\widetilde{X},A)$ vanishes.
	\item
	\label{propitem:criterionKpi1colimitofcohomology}
	For all finite abelian groups $A$ and all $i \geq 2$ we have
	\[
	\colimit_{(Y, {y}) \in \FEt_{(X, x)}} \rH^i(Y,A) = 0.
	\]
\end{thmlist}
\end{proposition}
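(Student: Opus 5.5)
The plan is to prove the cycle (1)$\Leftrightarrow$(2)$\Leftrightarrow$(3) using the fibre sequence of \cref{lem:compatibility-universal-cover}, and then (3)$\Leftrightarrow$(4)$\Leftrightarrow$(5) using cohomology and continuity.

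For (1)$\Leftrightarrow$(2): since $X$ is connected, $\ethtpytype_{\leq 1}(X)\simeq \B\!\etfdtlgrp(X,x)$, and the canonical map in (1) is the unit $\ethtpytype(X)\to\trunc_{\leq 1}\ethtpytype(X)$. This unit is an equivalence if and only if $\ethtpytype(X)$ is $1$-truncated, which, together with connectedness, is (2).

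For (2)$\Leftrightarrow$(3), use the cartesian square of \cref{lem:compatibility-universal-cover}. Its fibre over the point $x$ of $\ethtpytype_{\leq 1}(X)$ is $\ethtpytype(\widetilde X)$. If the unit map is an equivalence, then the fibre is the fibre of $\point\to\B\!\etfdtlgrp$ pulled back along an equivalence, hence contractible. Conversely, suppose $\ethtpytype(\widetilde X)\simeq\point$. Then the map $\ethtpytype(X)\to\ethtpytype_{\leq1}(X)$ has contractible fibre over the unique connected component of the connected target. I would conclude with the long exact sequence of profinite homotopy groups, which is exact for cofiltered limits of finite groups, together with the profinite Whitehead theorem \ref{thm:profinite-whitehead-theorem}. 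Together these give that $\ethtpytype(X)\to\ethtpytype_{\leq 1}(X)$ is an equivalence.

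For (3)$\Leftrightarrow$(4): étale cohomology with finite coefficients is computed by the profinite étale homotopy type, $\rH^i(\widetilde X,A)=\pi_0\map(\ethtpytype(\widetilde X),\K(A,i))$, since $\K(A,i)$ is $\pi$-finite and truncated constant sheaves are hypersheaves. So contractibility gives vanishing. Conversely, $\widetilde X$ is connected and simply connected, so $\ethtpytype(\widetilde X)$ is connected with trivial $\pi_1$. Vanishing of $\rH^i(\widetilde X,A)$ for all finite abelian $A$ and $i\geq1$ then gives contractibility by a profinite Hurewicz/Whitehead argument. Here the cohomological characterization of equivalences between simply connected profinite anima, presumably in the appendix, is the input.

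For (4)$\Leftrightarrow$(5): by continuity of étale cohomology along the cofiltered limit with affine (finite) transition maps $\widetilde X=\limit Y$, we have $\rH^i(\widetilde X,A)=\colimit \rH^i(Y,A)$. For $i\geq2$ this is (5). For $i=1$, the colimit of $\rH^1(Y,A)=\Hom(\etfdtlgrp(Y,y),A)$ vanishes automatically, since any homomorphism dies on a finite-index subgroup, i.e.\ on a further cover.

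The main obstacle I expect is the converse in (3)$\Leftarrow$(4): passing from vanishing cohomology to contractibility in $\Pro(\pifinAni)$. This requires that a simply connected profinite anima with vanishing cohomology in all finite coefficients is contractible, and some care with pro-homotopy groups (Mittag-Leffler issues are avoided since the groups are profinite). I would rely on the profinite Whitehead and Hurewicz results from the appendix.
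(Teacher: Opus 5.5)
Your proposal is correct and follows essentially the same route as the paper: you factor through $\ethtpytype_{\leq 1}(X)\simeq \B\!\etfdtlgrp(X,x)$ for (1)$\Leftrightarrow$(2), use the fibre sequence of \cref{lem:compatibility-universal-cover} for (2)$\Leftrightarrow$(3), and use continuity of étale cohomology together with the automatic vanishing of the colimit in degree $1$ for (4)$\Leftrightarrow$(5). Your ``Hurewicz/Whitehead argument'' for (4)$\Rightarrow$(3) is made explicit in the paper as an induction: since $\ethtpytype(\widetilde X)$ is simply connected, \cref{lem:profinite-hurewicz-theorem} gives $\rH^n(\widetilde X,A)=\Hom(\htpygrp_n,A)$ in the lowest possibly nonvanishing degree $n$, so vanishing for all finite $A$ kills $\htpygrp_n$, and one concludes with \cref{thm:profinite-whitehead-theorem}.
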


\begin{proof}
\lcref{propitem:criterionKpi1classifyingmap} $\Leftrightarrow$ \lcref{propitem:criterionKpi1Kpi1}: The canonical map factors as
\[
  \ethtpytype(X) \to \ethtpytype_{\leq 1}(X) \to \B\!\etfdtlgrp(X, x).
\]
The second map is a map between $1$-truncated objects that is an isomorphism on $\pi_0$ and $\pi_1$, hence is an equivalence by  the profinite Whitehead theorem, see \cref{thm:profinite-whitehead-theorem}. 

\lcref{propitem:criterionKpi1Kpi1} $\Leftrightarrow$ \lcref{propitem:criterionKpi1contractibleuniversalcover}:
By \cref{lem:compatibility-universal-cover}, the homotopy fibre of $\ethtpytype(X) \to \ethtpytype_{\leq 1}(X)$ in $ x$ is given by $\ethtpytype(\widetilde{X})$.
The claim thus follows from the fact that a map between connected profinite anima is an equivalence if and only if its homotopy fibre is contractible.

\lcref{propitem:criterionKpi1contractibleuniversalcover} $\Leftrightarrow$ \lcref{propitem:criterionKpi1cohomology}:
Note that $\ethtpytype(\widetilde{X})$ is simply connected.
Therefore, by \cref{lem:profinite-hurewicz-theorem}, we have canonical isomorphisms
\[
  \rH^{2}(\widetilde{X}, A) = \rH^2(\ethtpytype(\widetilde{X}), A) = \Hom(\ethtpygrp[2][X, x], A)
\]
for any choice of geometric point $x$ of $X$.
Thus \lcref{propitem:criterionKpi1cohomology} implies that $\ethtpygrp[2][X, x]$ vanishes for any choice of geometric point $x$ of $X$.
Repeating the same argument with $\rH^{i}$, by induction on $i \geq 2$, we see that $\ethtpygrp[i][\widetilde{X}, x] = 0$ for all $i \geq 1$.
Hence $\ethtpytype(\widetilde{X})$ is contractible by  \cref{thm:profinite-whitehead-theorem}.

\lcref{propitem:criterionKpi1cohomology} $\Leftrightarrow$ \lcref{propitem:criterionKpi1colimitofcohomology}: For qcqs schemes, étale cohomology is compatible with limits showing
\[
  \rH^i(\widetilde{X},A) = \colimit_{(Y, {y}) \in \FEt_{(X, x)}} \rH^i(Y,A).
\]
Note that the colimit for $i=1$ always vanishes, because an element $\alpha  \in \rH^1(Y,A)$ classifies an étale $A$-torsor $Z \to Y$, a connected component $Z^0$ of which kills the class: $\alpha|_{Z^0} = 0$.
\end{proof}

\begin{proposition}
  \label{prop:hyperbolic-curves-are-K-pi-1}
  Let $k$ be a field and let $X$ be a smooth connected curve over $k$ that is not proper of genus $0$.
  Then the following holds.
  \begin{thmlist}
    \item
    \label{propitem:fields-are-k-pi-1}
    $\Spec(k)$ is a profinite étale $\K(\pi, 1)$.

    \item
    \label{propitem:hyperbolic-curves-are-K-pi-1}
    $X$ is a profinite étale $\K(\pi, 1)$.

    \item
    \label{propitem:comparison-classifying-anima}
    The choices of a separable closure $k \subset \bar{k}$ and a geometric point $ x$ of $X$ determine equivalences
    \[
      \ethtpytype(k) \isomto \B\!\absGal{k} \hbox{ and }\ \ethtpytype(X) \isomto  \B\!\etfdtlgrp(X, x)
    \]
    of profinite anima. Here $\Galois_k = \Galois(\bar k/k)$ is the absolute Galois group of $k$ with base point $\bar k$.
  \end{thmlist}
\end{proposition}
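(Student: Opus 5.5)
We plan to check criterion \ref{propitem:criterionKpi1colimitofcohomology} of \cref{prop:criterionKpi1} in both cases, and then read off \ref{propitem:comparison-classifying-anima} from criterion \ref{propitem:criterionKpi1classifyingmap}.

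For \ref{propitem:fields-are-k-pi-1}, the pointed connected finite étale covers of $\Spec(k)$ are the spectra of finite separable subextensions $k'\subset\bar k$. Étale cohomology is Galois cohomology, so the colimit in \ref{propitem:criterionKpi1colimitofcohomology} is
\[
\colimit_{k'} \rH^i(\absGal{k'},A) = \rH^i(\{1\},A) = 0 \quad (i\ge 1).
\]
Here we use the standard compatibility of continuous cohomology of profinite groups with limits of groups. Equivalently, $\widetilde{\Spec(k)}=\Spec(\bar k)$ has vanishing higher cohomology, which is criterion \ref{propitem:criterionKpi1cohomology}.

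For \ref{propitem:hyperbolic-curves-are-K-pi-1}, we use criterion \ref{propitem:criterionKpi1cohomology}. Write $\widetilde X=\limit Y$. Since $\Spec(\bar k)\to\Spec(k)$ appears as a limit of pointed covers (via $X_{k'}\to X$), $\widetilde X$ is also the universal cover of $X_{\bar k}$. So we may assume $k$ separably closed; replacing further by the perfection, which does not change étale cohomology, we may assume $k$ algebraically closed.

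We then show $\colimit_Y \rH^i(Y,A)=0$ for $i\ge 2$, reducing to $A=\bZ/\ell$ by dévissage. Each $Y$ is a smooth connected curve over $k$, so $\rH^i(Y,A)=0$ for $i\ge3$ by cohomological dimension of curves. If $X$ is affine, then every $Y$ is affine and $\rH^2(Y,A)=0$ as well. If $X$ is proper, its genus is at least $1$, and then $\rH^2(Y,\bZ/\ell)\cong\bZ/\ell$ via the trace map. For a connected cover $Y'\to Y$ of degree $d$, the restriction map on $\rH^2$ is multiplication by $d$. Since $\etfdtlgrp$ of a proper curve of genus $\ge1$ has quotients of order divisible by $\ell$ (e.g.\ $\ell$-torsion of the Jacobian, prime-to-$p$ or Artin–Schreier for $\ell=p$), every class is killed by a cover of degree divisible by $\ell$. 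Hence the colimit vanishes.

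In both cases, a general finite $A$ follows from $\bZ/\ell$ by exactness of filtered colimits and the long exact sequences of cohomology.

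For \ref{propitem:comparison-classifying-anima}, criterion \ref{propitem:criterionKpi1classifyingmap} gives the equivalences $\ethtpytype(X)\isomto\B\etfdtlgrp(X,x)$. For the field, the base point $\bar k$ identifies $\etfdtlgrp(\Spec k,\bar k)=\absGal{k}$.

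The main obstacle is the proper case, specifically $\ell=p$ in characteristic $p$. Here we need a degree-divisible-by-$p$ cover; for genus $\ge1$ this comes from an Artin–Schreier cover, or from a $p$-power isogeny of the Jacobian pulled back to the curve. We must also check that the corestriction argument is compatible with the trace identification in $p$-torsion coefficients. Alternatively, for $\ell=p$ one can use that $\rH^2(Y,\bZ/p)=0$ already for proper curves over algebraically closed fields of characteristic $p$ (cohomological $p$-dimension $\le1$), which avoids the issue entirely.
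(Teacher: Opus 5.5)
Your proposal is correct in outline and follows the paper's route. Both verify criterion \ref{propitem:criterionKpi1colimitofcohomology} of \cref{prop:criterionKpi1}, using Galois cohomology for $\Spec(k)$ and the cohomological dimension of affine curves \cite[Exp.~IX, Cor.~5.7]{SGA4} in the affine case. The only difference is the proper case: the paper simply refers to \cite[Proposition~15]{Schmidt-ext}, while you argue directly. You identify $\rH^2(Y,\bZ/\ell)\cong\bZ/\ell$ via the trace, note that restriction along a cover of degree $d$ is multiplication by $d$, and use that $g\ge 1$ gives connected cyclic covers of degree $\ell$. This is fine for $\ell\neq\operatorname{char}(k)$.

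Two corrections are needed. First, your main line asserts $\rH^2(Y,\bZ/\ell)\cong\bZ/\ell$ for every prime $\ell$, which is false for $\ell=p=\operatorname{char}(k)$. Over an algebraically closed field, the Artin--Schreier sequence together with $\rH^2(Y,\mathcal{O}_Y)=0$ gives $\rH^2(Y,\bZ/p)=\operatorname{coker}\bigl(F-1\colon \rH^1(Y,\mathcal{O}_Y)\to\rH^1(Y,\mathcal{O}_Y)\bigr)=0$. So the ``alternative'' in your last paragraph is the argument to use. Drop the discussion of covers of degree divisible by $p$ and of a trace in $p$-torsion coefficients, since it rests on that false identification.

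Second, you identify $\widetilde X$ with the universal cover of $X_{\bar k}$ via the covers $X_{k'}\to X$. This needs those covers to be connected, i.e.\ $X$ geometrically connected, which the statement does not assume. Fix this by first replacing $k$ with its algebraic closure in $k(X)$. That field is finite separable over $k$ because $X$ is smooth, it lies in $\Gamma(X,\mathcal{O}_X)$ by normality, and over it $X$ is a smooth geometrically connected curve.
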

\begin{proof}
In view of \cref{prop:criterionKpi1}, assertion \lcref{propitem:comparison-classifying-anima} follows from \lcref{propitem:fields-are-k-pi-1} and \lcref{propitem:hyperbolic-curves-are-K-pi-1}. It remains to verify condition \lcref{propitem:criterionKpi1colimitofcohomology} of \cref{prop:criterionKpi1} for $\Spec(k)$ and for $X$. For $\Spec(k)$ this follows since étale cohomology of a field is Galois cohomology of its absolute Galois group. For $X$ we refer to the proof of \cite[Proposition 15]{Schmidt-ext}. For affine curves, this follows already from \cite[Exp IX, Corollaire 5.7]{SGA4}, the result on étale cohomological dimension of affine curves.
\end{proof}

\begin{corollary}
  \label{cor:etale-htpy-classes-K-pi-1}
  Let $X$ and $Y$ be connected schemes over a field $k$.
  Assume that $X$ is a profinite étale $\K(\pi, 1)$.
  Choose a separable closure $\bar{k}$  of~$k$ and geometric points
  $x$ of $X$ and ${y}$ of $Y$ extending $\bar k$.
  \begin{thmlist}
    \item \label{coritem:equivalence}
    There is an equivalence
    \[
      \map_{\ethtpytype(k)}\big(\ethtpytype(Y), \ethtpytype(X)\big) 
      \simeq 
      \map_{\B\!\absGal{k}}\big(\B\!\etfdtlgrp(Y, {y}), \B\!\etfdtlgrp(X, x)\big).
    \]
    \item \label{coritem:bijection}
    There is a  bijection
    \[
        \ethtpycls[Y][X][k][] \isomto \htpycls[\B\!\etfdtlgrp(Y, {y})][\B\!\etfdtlgrp(X, x)][\B\!\absGal{k}][].
    \]
  \end{thmlist}
In both assertions the classifying anima on the right hand side are considered as unpointed objects.
\end{corollary}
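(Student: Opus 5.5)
The plan is to reduce \cref{coritem:equivalence} to a comparison of slice categories under a commutative square of equivalences, and then to obtain \cref{coritem:bijection} by applying $\pi_0$. First, by \cref{prop:hyperbolic-curves-are-K-pi-1}~\ref{propitem:fields-are-k-pi-1} and \cref{prop:criterionKpi1}~\ref{propitem:criterionKpi1classifyingmap}, the canonical map $\ethtpytype(k) \to \B\!\absGal{k}$ is an equivalence. Since $X$ is a profinite étale $\K(\pi,1)$, the same criterion gives an equivalence $\ethtpytype(X) \isomto \B\!\etfdtlgrp(X,x)$. For $Y$ we only use the truncation map $\ethtpytype(Y) \to \ethtpytype_{\le 1}(Y) \simeq \B\!\etfdtlgrp(Y,y)$. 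The latter identification holds because $Y$ is connected, again by the profinite Whitehead theorem \ref{thm:profinite-whitehead-theorem}, as in the proof of \cref{prop:criterionKpi1}.

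The compatible choice of geometric points $x$, $y$ over $\bar k$ ensures that the structure maps to $\ethtpytype(k)$ and to $\B\!\absGal{k}$ form commutative squares with these classifying maps. Indeed, the classifying maps are the unit maps of $1$-truncation, which are natural. Thus $\ethtpytype_{\le1}(Y) \to \ethtpytype_{\le1}(k) \simeq \B\!\absGal{k}$ is $\tau_{\le1}$ of the structure map, and it corresponds to $\B$ of $\etfdtlgrp(Y,y) \to \absGal{k}$ once basepoints are fixed. Transport along the equivalence $\ethtpytype(k) \simeq \B\!\absGal{k}$ then identifies the slice $\pfAni[\ethtpytype(k)]$ with $\pfAni[\B\!\absGal{k}]$. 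This gives
\[
\map_{\ethtpytype(k)}\big(\ethtpytype(Y), \ethtpytype(X)\big) \simeq \map_{\B\!\absGal{k}}\big(\ethtpytype(Y), \B\!\etfdtlgrp(X,x)\big).
\]

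The remaining and main step is to replace the source $\ethtpytype(Y)$ by its $1$-truncation in the slice. The target $\B\!\etfdtlgrp(X,x) \to \B\!\absGal{k}$ is a map between $1$-truncated profinite anima, so it is a $1$-truncated object of the slice $\pfAni[\B\!\absGal{k}]$. By the universal property of $\tau_{\le1}$ in the slice, precomposition with $\ethtpytype(Y) \to \tau_{\le1}\ethtpytype(Y)$ induces an equivalence on mapping anima into $1$-truncated objects. Here I would check that truncation in the slice over a $1$-truncated base agrees with absolute truncation. Relative truncation over $B$ is a factorization $E \to \tau_{\le1}^{/B}E \to B$. When $B$ is $1$-truncated, the absolute truncation $\tau_{\le 1}E \to B$ is already $1$-truncated as a map, so the two agree. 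Both facts about truncation in $\Pro(\pifinAni)$ would be taken from the appendix (cf.\ \cref{cor:n-truncation-profinite-anima}).

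Combining these identifications yields \cref{coritem:equivalence}. Applying $\pi_0$ to it then gives the bijection of homotopy classes in \cref{coritem:bijection}. All classifying anima are regarded as unpointed objects, with the basepoints used only to name the groups.
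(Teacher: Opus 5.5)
Your proposal is correct and follows the paper's route. You identify $\ethtpytype(k)$ and $\ethtpytype(X)$ with their classifying anima, use the universal property of $\trunc_{\leq 1}$ of $\ethtpytype(Y)$ against the $1$-truncated target over the $1$-truncated base together with $\ethtpytype_{\leq 1}(Y) \simeq \B\!\etfdtlgrp(Y,y)$, and pass to $\pi_0$ for the bijection. Your detour through relative truncation in the slice is valid but can be shortened: $\map_{B}(E,F)$ is the fibre of $\map(E,F) \to \map(E,B)$, and for $F$ and $B$ both $1$-truncated neither term changes when $E$ is replaced by $\trunc_{\leq 1} E$.
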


\begin{proof}
Assertion \labelcref{coritem:equivalence} follows from the equivalence $\ethtpytype(X) \isomto  \B\!\etfdtlgrp(X, x)$ of \Cref{prop:hyperbolic-curves-are-K-pi-1}, the universal property of the $1$-truncation of $\ethtpytype(Y)$ because $\B\!\etfdtlgrp(X, x)$ is $1$-truncated, and the fact that $\ethtpytype_{\leq 1}(Y) \to \B\!\etfdtlgrp(Y, y)$ is an equivalence.

Assertion \labelcref{coritem:bijection} follows from \labelcref{coritem:equivalence} by passing to connected components of the mapping anima.
\end{proof}

%%%% Local Variables:
%%%% mode: LaTeX
%%%% TeX-master: "../haupt"
%%%% End:

%% file: content/mochizuki-anima-version.tex
In this section we define the map $\diagr{3}$ of diagram \eqref{eq:main square}, namely
\[
\Hom_{\eta}^{\dom}(Y_{\eta}, X_{\eta})  \stackrel{\diagr{3}}{\longrightarrow} \etopcls[Y_\eta][X_\eta][\eta],
\]
and show that  Theorem A of \cite{Mochizuki99} translates into $\diagr{3}$ being bijective.

Let $Y/k$ be connected and let $X/k$ be a geometrically connected profinite étale $\K(\pi, 1)$.
Then, for any choice of geometric points $y \to Y$ and $x \to X$, the following diagram commutes:
\begin{equation}
  \label{eq:comparison-mzki0}
  \begin{tikzcd}
           \Hom_{k}(Y, X)
           \arrow[rr, "{\ethtpytype}"]  \arrow[d, dashed, "\etfdtlgrp"']
           && \ethtpycls[Y][X][k] \arrow[d, "{\vsim}"'] \\
           \Hom^{\out}_{\absGal{k}}\big(\etfdtlgrp(Y, y), \etfdtlgrp(X, x)\big)
           && { \sqrbr{ \ethtpytype(Y), \B\!\etfdtlgrp(X, x) }_{\B\!\absGal{k}} }
           \arrow[ll, "{\sim}", "{\etfdtlgrp \modmod \etfdtlgrp(X_{\bar{k}}, x)}"',swap].
  \end{tikzcd}
\end{equation}
The right vertical arrow is a bijection since $X$ is a profinite étale $\K(\pi, 1)$, see \Cref{cor:etale-htpy-classes-K-pi-1}.
The bottom map is bijective by  \cref{cor:htpy-classes-of-maps-over-BG} since  $X$ is geometrically connected and hence the map $\etfdtlgrp(X, x) \to \absGal{k}$ is surjective.

If we further restrict to dominant morphisms $f \colon Y \to X$ of geometrically unibranch schemes, then the induced maps $\etfdtlgrp(f)$ are open.
It follows that $\ethtpytype(f)$ is a $\pi_1$-open map of homotopy types. We therefore obtain a commutative subdiagram
\begin{equation}
  \label{eq:comparison-mzki}
  \begin{tikzcd}
           \Hom_{k}^{\dom}(Y, X)
           \arrow[rr, "{\ethtpytype}"]  \arrow[d, dashed, "{\etfdtlgrp}"']
           && \etopcls[Y][X][k] \arrow[d, "\vsim"'] \\
           \Hom^{\open, \out}_{\absGal{k}}\big(\etfdtlgrp(Y, y), \etfdtlgrp(X, x)\big)
           && { \sqrbr{ \ethtpytype(Y), \B\!\etfdtlgrp(X, x)}_{\B\!\absGal{k}}^{\pioneop} }
           \arrow[ll,"{\pi_1 \modmod \etfdtlgrp(X_{\bar{k}}, x)}"', "\sim",swap].
  \end{tikzcd}  
\end{equation}
This enables us to reformulate \cite[Theorem A]{Mochizuki99} in terms of profinite étale homotopy types.
We also take this opportunity to weaken the assumption on $Y$ from smooth to normal of finite type.

\begin{theorem}[Homotopy-theoretic reformulation of Mochizuki's Theorem]
  \label{cor:Mochizuki-via-homotopy-types}
  Let $k$ be a sub-$p$-adic field, $X$ a hyperbolic curve over $k$ and\/ $Y$ a normal connected variety over $k$.
  Then the canonical map from dominant morphisms to homotopy classes of $\pi_1$-open maps of profinite anima
  \[
      \ethtpytype \colon \Hom_{k}^{\dom}(Y, X) \isomto \ethtpycls[Y][X][k][\pioneop]
  \]
  is a bijection.
\end{theorem}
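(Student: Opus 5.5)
The plan is to reduce the statement to Mochizuki's Theorem A in \cite{Mochizuki99} using the commutative diagram \eqref{eq:comparison-mzki}. Its right vertical arrow and bottom arrow are bijections by \cref{cor:etale-htpy-classes-K-pi-1} and \cref{cor:htpy-classes-of-maps-over-BG}. Hence bijectivity of the top arrow $\ethtpytype$ is equivalent to bijectivity of the left arrow
\[
\etfdtlgrp \colon \Hom_{k}^{\dom}(Y, X) \longrightarrow \Hom^{\open, \out}_{\absGal{k}}\big(\etfdtlgrp(Y, y), \etfdtlgrp(X, x)\big).
\]
Before using this, I check that the diagram is well defined in our setting. $X$ is a geometrically connected profinite étale $\K(\pi,1)$ by \cref{prop:hyperbolic-curves-are-K-pi-1}. $Y$ is normal, hence geometrically unibranch, so a dominant morphism induces an open map on $\etfdtlgrp$. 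It follows that $\ethtpytype(f)$ is $\pi_1$-open at every point, since $\pi_1$-openness can be checked at one base point per connected component.

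For smooth $Y$, the bijectivity of the left arrow is exactly \cite[Theorem A]{Mochizuki99}. That theorem asserts that, for $k$ sub-$p$-adic, $X$ hyperbolic and $Y$ smooth, dominant $k$-morphisms correspond bijectively to open $\absGal{k}$-homomorphisms up to conjugation by $\etfdtlgrp(X_{\bar k})$.

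The real work is weakening smoothness to normality. I would take a dense open smooth subscheme $U \subseteq Y$ (the regular locus, which is smooth because $\operatorname{char} k = 0$). Restriction then gives a commutative square relating the left arrow for $Y$ to the left arrow for $U$. Since $U$ is dense and $X$ is separated, restriction $\Hom_k^{\dom}(Y,X) \to \Hom_k^{\dom}(U,X)$ is injective. By normality, $\etfdtlgrp(U) \to \etfdtlgrp(Y)$ is surjective, so restriction of open homomorphisms classes is also injective, and a homomorphism from $\etfdtlgrp(Y)$ is open iff its composite from $\etfdtlgrp(U)$ is. Injectivity for $Y$ therefore follows from Mochizuki for $U$.

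For surjectivity, take an open homomorphism $\phi$ on $\etfdtlgrp(Y)$. Mochizuki gives a dominant $f \colon U \to X$ whose induced map is $\phi$ restricted to $U$. I must extend $f$ to $Y$; this is the main obstacle. I would first show that $f$ extends over all codimension-one points of $Y$. Let $\overline X$ be the smooth compactification. At a codimension-one point $\xi \in Y \smallsetminus U$ the local ring is a DVR, so $f$ extends to $\overline X$ near $\xi$, and it cannot meet the cusps $D$: otherwise the inertia at $\xi$, trivial in $\etfdtlgrp(Y)$ because the map factors through $\etfdtlgrp(Y)$, would map nontrivially to cuspidal inertia. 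I then extend $f$ over the remaining locus, of codimension $\ge 2$. For this I would use that $\overline{X}$ (or $X$) with genus $\geq 1$, or after a finite étale cover of $X$ of higher genus, admits no rational curves. Then a rational map from a normal variety to $\overline{X}$ is defined in codimension $2$ by the Zariski main theorem argument: the graph closure has fibres that are connected and rationally chain connected over the exceptional points. Any finite étale cover needed is pulled back along $\phi$ and the extension is descended. Finally I would verify, by the injectivity established above, that the extended map induces $\phi$.
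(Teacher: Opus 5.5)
Your smooth case is exactly the paper's argument: Mochizuki's Theorem~A \cite{Mochizuki99} inserted into diagram \eqref{eq:comparison-mzki}. Your injectivity argument for normal $Y$ via a dense smooth open $U \subseteq Y$ is also fine.

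The gap is in surjectivity, where you must extend $f \colon U \to X$ to all of $Y$.

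First, your codimension-one step is empty. A normal variety is regular in codimension one, so $Y \smallsetminus U$ already has codimension $\geq 2$, and all the difficulty lies in your second step.

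There you rely on the principle that a rational map from a normal variety to a target without rational curves extends, because the fibres of the graph closure over indeterminacy points are rationally chain connected. This is false for singular normal $Y$. Rational chain connectedness of exceptional fibres holds over smooth points, but not over arbitrary normal singularities. Concretely, let $C \subset \mathbb{P}^2_k$ be a smooth plane quartic; it is a proper hyperbolic curve, so it contains no rational curves. Let $Y$ be the affine cone over $C$. Then:
\begin{itemize}
\item $Y$ is a normal surface;
\item the projection $Y \smallsetminus \{0\} \to C$ is a dominant morphism;
\item it does not extend over the vertex, since the fibre of the graph closure over the vertex is all of $C$.
\end{itemize}
This is consistent with the theorem: the cone is geometrically simply connected, while the projection is nontrivial on geometric fundamental groups. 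It shows that your argument uses the hypothesis that $f_*$ factors through $\etfdtlgrp(Y)$ only at codimension-one points, where it is not needed, and not at the singular points, where it is indispensable.

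The missing idea is to use the factorisation through $\etfdtlgrp(Y)$ to contract exceptional fibres, which is what the paper does.
\begin{itemize}
\item Take a resolution $\sigma \colon Y' \to Y$; you may choose it to be an isomorphism over $U$.
\item Since $\etfdtlgrp(\sigma)$ is surjective, $\phi \circ \etfdtlgrp(\sigma)$ is open. So Mochizuki's theorem applied directly to the smooth $Y'$ gives a dominant $f' \colon Y' \to X$, and no extension problem on $Y'$ ever arises.
\item Because $Y$ is normal and $\sigma$ is proper birational, $\sigma_*\dO_{Y'} = \dO_Y$. Hence $f'$ descends to $Y$ as soon as it contracts every proper smooth curve $C$ mapping into a fibre of $\sigma$.
\item For such $C$, the map $\etfdtlgrp(C_{\bar k}) \to \etfdtlgrp(X_{\bar k})$ factors through $\etfdtlgrp(\sigma)$, hence through the fundamental group of the point $\sigma(C_{\bar k})$, so it is trivial.
\item A morphism from a smooth curve to a hyperbolic curve that is trivial on geometric fundamental groups is constant.
\end{itemize}
It is this $\pi_1$-argument, not the absence of rational curves, that makes the extension work.
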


\begin{proof}
  We first assume that $Y$ is smooth over $k$. Since, by \Cref{prop:hyperbolic-curves-are-K-pi-1} \labelcref{propitem:hyperbolic-curves-are-K-pi-1}, $X$ is a geometrically connected profinite étale $\K(\pi, 1)$, we have the commutative square \eqref{eq:comparison-mzki} above.
  The upper horizontal arrow is a bijection since the left vertical arrow is a bijection by \cite[Theorem~A]{Mochizuki99}.

If $Y$ is merely normal, then we may choose a resolution of singularities $\sigma \colon  {Y'} \to Y$ with $Y'$ smooth over $k$ and $\sigma$ proper. Precomposition with $\sigma$ induces the vertical maps in the commutative diagram
\[
  \begin{tikzcd}
           \Hom_{k}^{\dom}(Y', X)  \arrow[r, "{\ethtpytype}"', "\sim" , swap]
           & \etopcls[{Y'}][X][k] \arrow[r, "\sim"]
           & \Hom^{\open, \out}_{\absGal{k}}\big(\etfdtlgrp(\tilde{Y}, y), \etfdtlgrp(X, x)\big) \\
           \Hom_{k}^{\dom}(Y, X)    \arrow[r, "{\ethtpytype}"]   \arrow[u, hook, "-\circ \sigma"]
           & \etopcls[Y][X][k]  \arrow[r, "\sim"]   \arrow[u, "-\circ \ethtpytype(\sigma)"]
           & \Hom^{\open, \out}_{\absGal{k}}\big(\etfdtlgrp(Y, y), \etfdtlgrp(X, x)\big)   \arrow[u, hook, "-\circ \etfdtlgrp(\sigma)"] .
  \end{tikzcd}
\]
Note that $\etfdtlgrp(\sigma)$ is surjective, hence the right vertical map is injective.
The horizontal maps with the exception of the lower left one are bijections  because of the smooth case already dealt with and the discussion around diagram \eqref{eq:comparison-mzki}.
It remains to show that the outer rectangle is cartesian.
This means that we need to show that a dominant morphism ${f'} \colon {Y'} \to X$ 
factors through $\sigma$ to a morphism $f \colon Y \to X$ if the induced map $\etfdtlgrp(f') \colon \etfdtlgrp({Y'}) \to \etfdtlgrp(X)$ factors through $\etfdtlgrp(\sigma)$  and a map $\etfdtlgrp(Y) \to \etfdtlgrp(X)$.
Because $\sigma \colon  Y' \to Y$ is proper birational and $Y$ is normal, we have $\sigma_\ast \dO_{{Y'}} = \dO_Y$ and $f'$ factors if and only if $f'|_C$ is constant for all proper smooth $k$-curves $C$ mapping to $Y'$ with image entirely in a fibre of~$\sigma$.
The induced map $\etfdtlgrp(C_{\bar k}) \to \etfdtlgrp({Y'}_{\bar k}) \to \etfdtlgrp(X_{\bar k})$ factors through $\etfdtlgrp(\sigma)$ and thus through $\etfdtlgrp(\sigma(C_{\bar k})) = 1$.
But a morphism from a smooth curve to a hyperbolic curve which is trivial on geometric fundamental groups is constant.
This proves the contraction of $f'(C)$ to a point and hence the theorem.
\end{proof} 

%%% Local Variables:
%%% mode: LaTeX
%%% TeX-master: "../haupt"
%%% End:

%% file: content/basechange.tex
The notion of a \emph{quasifibration of topological spaces} was introduced by Dold and Thom in \cite{dold1958}. In this paper, we will develop the concept of \emph{quasifibrations of schemes}, but only as far as we need it for our application. We show that a relative hyperbolic curve over a normal scheme in characteristic zero is a quasifibration. This leads to the base change map, the arrow $\diagr{2}$ in the diagram \eqref{eq:main square} of the introduction. 
A comprehensive analysis of quasifibrations of schemes --- including positive and mixed characteristics, as well as non-normal base schemes --- will be the content of \cite{QFiber}.

%----------------------------------------------------------------------------------------------------------------------------------
\subsection{Quasifibrations}
\label{subsec:quasifibrations}

\begin{definition}
\label{def:quasifibration}
A  morphism $f \colon X \to S$ of qcqs schemes is called \defding{quasifibration}
if for all geometric points $s$ of $S$, the natural map of 
the fibre to the homotopy fibre induces an equivalence
 \[
 	\ethtpytype(X_{s}) \isomto 
	\fib_{\ethtpytype(s)}\big(\ethtpytype(f) \from \ethtpytype(X) \to \ethtpytype(S)\big).
\]
\end{definition}
 
For a more concise notation we will write $\fib_{s}$ for $\fib_{\ethtpytype(s)}$. 

\medskip
We  start by showing that quasifibrations are compatible with cofiltered limits. Using noetherian approximation, this reduces the general case to that of noetherian schemes in the proofs below.

\begin{lemma}
\label{lem:quasifibrations-compatibility-with-limits}
Let $(f_{\! n} \from X_n \to S_n)_n$ be a cofiltered system of morphisms of qcqs schemes with affine transition maps and limit $f = \limit_n f_{\! n} \from X \to S$.
If all $f_{\! n}$'s are quasifibrations, so is $f$.
\end{lemma}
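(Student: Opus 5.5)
Fix a geometric point $s \to S$. Since $S = \limit_n S_n$, it has images $s_n \to S_n$ in every $S_n$, and these are geometric points with the same underlying spectrum of a separably closed field $\Omega$. Fibres commute with cofiltered limits, so $X_s = \limit_n (X_n)_{s_n}$. The transition maps of this system are base changes of the affine maps $X_m \to X_n$, hence are affine themselves.

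The argument proceeds in four steps.

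\emph{Step 1: continuity.} By \cref{rec:continuity-of-the-etale-homotopy-type}, $\ethtpytype$ preserves cofiltered limits with affine transition maps. We obtain equivalences
\[
\ethtpytype(X) \simeq \limit_n \ethtpytype(X_n), \quad \ethtpytype(S) \simeq \limit_n \ethtpytype(S_n), \quad \ethtpytype(X_s) \simeq \limit_n \ethtpytype((X_n)_{s_n}),
\]
and these are compatible with the maps $\ethtpytype(f)$, $\ethtpytype(f_n)$ and with the points. The point $\ethtpytype(s) \to \ethtpytype(S)$ is the compatible system of the points $\ethtpytype(s_n) \to \ethtpytype(S_n)$. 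Here $\ethtpytype(\Spec \Omega) \simeq \point$ for every $n$ by \cref{prop:hyperbolic-curves-are-K-pi-1}.

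\emph{Step 2: fibres commute with limits.} Homotopy fibres are pullbacks, and limits commute with limits in $\Pro(\pifinAni)$. Combining this with Step 1 gives
\[
\fib_{s}\big(\ethtpytype(X) \to \ethtpytype(S)\big) \simeq \limit_n \fib_{s_n}\big(\ethtpytype(X_n) \to \ethtpytype(S_n)\big).
\]
One point needs care: a cofiltered limit of cospans $\ethtpytype(X_n) \to \ethtpytype(S_n) \leftarrow \point$ must have as its pullback the pullback of the limit cospan. This holds because both are limits over the product diagram, and cofiltered limits are taken in $\Pro(\pifinAni)$, where they exist.

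\emph{Step 3: naturality of the comparison map.} The comparison map $\ethtpytype(X_s) \to \fib_s$ must be identified with the limit of the comparison maps $\ethtpytype((X_n)_{s_n}) \to \fib_{s_n}$. This follows from functoriality of $\ethtpytype$ applied to the compatible system of cartesian squares of schemes $(X_n)_{s_n} \to X_n$ over $s_n \to S_n$. Each such square induces a commutative square of profinite anima, hence a map to the pullback. These maps are natural in $n$, so the limit map is the comparison map for $f$.

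\emph{Step 4: conclusion.} By hypothesis each $\ethtpytype((X_n)_{s_n}) \to \fib_{s_n}$ is an equivalence, since $f_n$ is a quasifibration. A limit of equivalences is an equivalence, so $f$ is a quasifibration.

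The main subtlety lies in Steps 1 and 3, namely making the comparison map natural at the $\infty$-categorical level. This requires a functor from the index category into commutative squares of profinite anima, not merely objectwise squares. I would obtain it by applying $\ethtpytype$ to the diagram of cartesian squares of schemes indexed by the cofiltered category, which is functorial as a diagram $I \times (\Delta^1 \times \Delta^1) \to \Sch^{\qcqs}$. Taking pullbacks is then a functor, so the identifications above are coherent.
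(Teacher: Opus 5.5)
Your proposal is correct and follows the paper's proof: you use continuity of $\ethtpytype$ for $X$, $S$ and the fibres, commute the homotopy fibre with the cofiltered limit, and apply the hypothesis levelwise, and your Step~3 makes explicit the naturality of the comparison map that the paper leaves implicit. One minor inaccuracy is that the transition maps $(X_m)_{s_m}\to(X_n)_{s_n}$ are not in general base changes of $X_m\to X_n$, but they are still affine, because $(X_m)_{s_m}\to X_m\to X_n$ is affine (any morphism out of $\Spec\Omega$ is affine) and $(X_n)_{s_n}\to X_n$ is separated.
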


\begin{proof}
  Let $s \to S$ be a geometric point and write $s_n$ for the image of $s \to S$ under $S \to S_n$.
  Since $f = \limit_n f_{\! n}$, we also have $\ethtpytype(f) \simeq \limit_n  \ethtpytype(f_{\! n})$ by the continuity of the étale homotopy type \Cref{rec:continuity-of-the-etale-homotopy-type}.
  Since limits commute with limits, we see that $\fib_{s}(\ethtpytype(f)) \simeq \limit_n \fib_{s_n}(\ethtpytype(f_{\! n}))$.
  Moreover, profinite completion preserves cofiltered limits, hence
\[
     \fib_{s}(\ethtpytype(f))
      \simeq \limit_n(\fib_{s_n}(\ethtpytype(f_{\! n})))
      \simeq \limit_n (\ethtpytype(X_{n, s_n}))
      \simeq  \ethtpytype(\limit_n X_{n, s_n})
      \simeq \ethtpytype(X_{s}). 
      \qedhere
\]
\end{proof}

\begin{proposition}
	\label{prop:finite-etale-is-quasifibration}
	Finite étale morphisms are  quasifibrations.
\end{proposition}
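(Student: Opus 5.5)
Let $f\colon X\to S$ be finite étale and $s$ a geometric point of $S$. We have to show that $\ethtpytype(X_s)\to\fib_s(\ethtpytype(f))$ is an equivalence. The plan is to combine \cref{prop:finite-etale-cartesian-squares} with an explicit computation of the homotopy fibre of a map of $1$-truncated objects.

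First we reduce to the noetherian case, in fact to $S$ connected. Noetherian approximation writes $f=\limit_n f_n$ with each $f_n$ finite étale between schemes of finite type over $\bZ$, and \cref{lem:quasifibrations-compatibility-with-limits} lets us assume $S$ noetherian. Then $S$ has finitely many connected components. Both sides of the comparison only see the component $S_0$ containing $s$, since $\ethtpytype(S)=\bigsqcup\ethtpytype(S_i)$ and pullback along a point lands in one summand. So we may assume $S$ connected. We may not assume $X$ connected, but $X$ is a finite disjoint union of connected finite étale covers. Both sides commute with finite coproducts in $X$, because in $\Pro(\pifinAni)$ finite coproducts are universal. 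So we may also take $X$ connected, although this step is not really needed.

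Next apply \cref{prop:finite-etale-cartesian-squares}. The square comparing $\ethtpytype(f)$ with $\ethtpytype_{\le1}(f)$ is cartesian, so pasting pullbacks gives
\[
\fib_s\big(\ethtpytype(X)\to\ethtpytype(S)\big)\simeq\fib_s\big(\ethtpytype_{\le1}(X)\to\ethtpytype_{\le1}(S)\big).
\]
Since $S$ is connected, $\ethtpytype_{\le1}(S)\simeq\B\!\etfdtlgrp(S,s)$. The map from $\ethtpytype_{\le1}(X)$ is the map of finite $\etfdtlgrp(S,s)$-sets modelled by $X_s\modmod\etfdtlgrp(S,s)\to\point\modmod\etfdtlgrp(S,s)$; this is Galois theory combined with the computation in \cref{lem:finite-etale-galois-cartesian-square}. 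Its fibre over the base point is the finite discrete set $X_s$. A cleaner route avoids the profinite group: choose a finite Galois quotient $G$ through which the action on $X_s$ factors. \cref{lem:finite-etale-galois-cartesian-square} gives a cartesian square with $\ethtpytype(X_s)\modmod G\to\B\!G$. The fibre over $s$ of $\ethtpytype(f)$ then equals the fibre of $\iota\colon\ethtpytype(X_s)\modmod G\to\B\!G$, namely $\ethtpytype(X_s)$, provided the point $\point\to\ethtpytype(S)\to\B\!G$ is the canonical base point.

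Finally, we check that the equivalence just obtained is the natural map from Definition~\ref{def:quasifibration}. This compatibility is the main obstacle; the rest is formal pasting. The natural map is induced by the commutative square $X_s\to X$ over $s\to S$. Applying $\ethtpytype$ to it and composing with $\psi$, the inclusion $X_s\to X$ followed by $\psi$ is the $G$-equivariant identification of $X_s$ with the fibre of $\iota$. To see this, note that $\ethtpytype(X_s)$ is the discrete set $X_s$, and that $\psi$ on a geometric point $y\in X_s$ is the point of $\ethtpytype(X_s)\modmod G$ given by $y$ with the trivial path. Both claims follow by tracing the construction of $\psi$ in the proof of \cref{lem:finite-etale-galois-cartesian-square}, namely $\ethtpytype(Z)\to\ethtpytype(Y)$ over $\point\to\B\!H$, evaluated at geometric points. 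Hence the map is a bijection of finite discrete profinite anima and therefore an equivalence.
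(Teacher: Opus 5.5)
Your proposal is correct and follows essentially the paper's proof. Like the paper, you reduce via noetherian approximation and \cref{lem:quasifibrations-compatibility-with-limits} to $S$ noetherian and connected, choose a finite quotient $G$ of $\etfdtlgrp(S,s)$ through which the action on $X_s$ factors, and read off the fibre from the cartesian square of \cref{lem:finite-etale-galois-cartesian-square} with $\iota\colon \ethtpytype(X_s)\modmod G\to\B\!G$. The compatibility with the natural map, which you check by hand, is exactly what the paper packages as the outer rectangle $\ethtpytype(X_s)\to\ethtpytype(X)\to\ethtpytype(X_s)\modmod G$ over $\ethtpytype(s)\to\ethtpytype(S)\to\B\!G$ being cartesian, before concluding by \cref{cor:cancellation of pullbacks}; your detour through \cref{prop:finite-etale-cartesian-squares} is not needed.
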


\begin{proof}
By \cref{lem:quasifibrations-compatibility-with-limits} and noetherian approximation, we may assume that  $f \colon X \to S$ is a finite étale morphism between noetherian schemes. 
We may further assume that $S$ is connected. 
Let $s \to S$ be a geometric point and $\ph \colon \etfdtlgrp(S, s) \surj G$ a finite quotient such that the action of $\etfdtlgrp(S, s)$ on the fibre $X_{s}$ factors through $G$.
Then, by \Cref{lem:finite-etale-galois-cartesian-square}, the right and the outer square in  the diagram 
\[
	\begin{tikzcd}
	\ethtpytype(X_{s})\rar\dar
	& \ethtpytype(X)\rar\dar  
	& \ethtpytype(X_{s}) \modmod G\dar
	\\
	\ethtpytype(s) \rar
	& \ethtpytype(S) \rar 
	& \B\!G
	\end{tikzcd}
\]
are cartesian.
By cancellation of pullbacks, \Cref{cor:cancellation of pullbacks}~\ref{coritem:pullback-pasting}, also the left square is cartesian.
\end{proof}

\begin{proposition}
\label{prop:quasifibrations-compatibility-et-htpytype-with-pullbacks}
We assume that in the cartesian square of qcqs schemes  
\[
	\begin{tikzcd}
	Y \rar{q} \arrow[d,"{g}"'] \cartesian & X \dar{f}
	\\
	T \rar{p} & S
	\end{tikzcd}
\]
both $g$ and $f$ are quasifibrations. 
Then also the following square of profinite anima is cartesian:
      \[
          \begin{tikzcd}
              \ethtpytype(Y) \arrow[r, "{\ethtpytype(q)}"] \arrow[d, "{\ethtpytype(g)}"'] \cartesian
                & \ethtpytype(X) \arrow[d, "{\ethtpytype(f)}"] \\
              \ethtpytype(T) \arrow[r, "{\ethtpytype(p)}"]
                & \ethtpytype(S) .
          \end{tikzcd}
      \]
\end{proposition}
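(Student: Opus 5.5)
We have to show that the comparison map
\[
	c \colon \ethtpytype(Y) \longrightarrow P := \ethtpytype(T) \times_{\ethtpytype(S)} \ethtpytype(X)
\]
is an equivalence of profinite anima. The plan is to check this fibrewise over $\ethtpytype(T)$, using the geometric points of $T$ as test points.

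First we reduce to the noetherian case. By noetherian approximation we write the square as a cofiltered limit of cartesian squares of finite type $\bZ$-schemes. We may arrange that quasifibrations are preserved along the approximation, or else we work directly, since everything below commutes with cofiltered limits by \cref{rec:continuity-of-the-etale-homotopy-type}. In the noetherian case we may moreover assume that $T$ is connected, treating each connected component separately.

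Now fix a geometric point $t$ of $T$ with image $s = p(t)$ in $S$. Taking homotopy fibres over $\ethtpytype(t)$ gives a commutative diagram
\[
	\ethtpytype(Y_t) \longrightarrow \fib_t(\ethtpytype(g)) \xrightarrow{\fib_t(c)} \fib_t(P) \simeq \fib_s(\ethtpytype(f)) \longleftarrow \ethtpytype(X_s).
\]
The identification $\fib_t(P) \simeq \fib_s(\ethtpytype(f))$ holds by pasting of pullbacks, \Cref{cor:cancellation of pullbacks}. The first map is an equivalence because $g$ is a quasifibration, and the last because $f$ is. The fibres satisfy $Y_t = X_s \times_s t$, where $t \to s$ is an extension of separably closed fields. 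The composite $\ethtpytype(Y_t) \to \ethtpytype(X_s)$ is the map induced by this base change, so $\fib_t(c)$ is an equivalence exactly when that base change map is. In characteristic zero this follows from \cref{prop:geometric-invariant}. In general, $t$ and $s$ can be chosen as algebraic closures of the same residue field, in which case $Y_t = X_s$, and any other geometric point over the same point of $T$ gives an equivalent fibre. This choice of points is the step I expect to need care: if only algebraic geometric points are needed as test points, the invariance is automatic.

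Finally, a map over $\ethtpytype(T)$ that induces equivalences on homotopy fibres over all points of $\ethtpytype(T)$ is an equivalence. Every point of the profinite anima $\ethtpytype(T)$ lies in the component of some $\ethtpytype(t)$; since $T$ is connected after the reduction, one geometric point per component suffices. We then apply the profinite fibrewise-equivalence criterion, which combines the long exact sequence of homotopy groups with the profinite Whitehead theorem \ref{thm:profinite-whitehead-theorem}.

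The main obstacle is the profinite version of the fibrewise criterion itself. Homotopy fibres over a point of a pro-object must detect equivalences, and this requires the long exact sequence for profinite homotopy groups to hold, which uses the compactness of profinite groups so that cofiltered limits of finite sets are exact enough. I would first try to deduce this from the appendix results on profinite anima, writing $\ethtpytype(T)$ as a cofiltered limit of $\pi$-finite anima and checking the criterion levelwise.
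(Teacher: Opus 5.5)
Your argument is essentially the paper's. The paper applies the fibrewise criterion \Cref{lem:criterion-homotopy-pullback-fibres}, which is exactly the profinite criterion you flag as the main obstacle and is already proved in the appendix. It tests on points of $\ethtpytype(T)$ coming from geometric points $t$, using $\pi_0\ethtpytype(T) = \hat\pi_0(T)$, and compares fibres via the quasifibration property of $g$ at $t$ and of $f$ at $s$.

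The one step to fix is the choice of $s$: take $s = p \circ t$. This is a geometric point of $S$ with the same algebraically closed field, and it is allowed because the definition of quasifibration quantifies over all geometric points. Then $Y_t \to X_s$ is an isomorphism of schemes, so neither the detour through \Cref{prop:geometric-invariant} nor the noetherian reduction is needed. The reduction would in fact be unjustified, since \Cref{lem:quasifibrations-compatibility-with-limits} only passes quasifibrations up to the limit, not down to the approximating stages.
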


\begin{proof}
By \Cref{lem:criterion-homotopy-pullback-fibres} it suffices to compare the homotopy fibres of $\ethtpytype(g)$  and $\ethtpytype(f)$.
Because of $\htpygrp_0(\ethtpytype(T)) = \ethtpygrp[0][T]$, it suffices to check this on points of $\ethtpytype(T)$ arising from geometric points $t$ of  $T$ and their images $s = p(t)$ in $S$.
The map comparing the homotopy fibres sits in a commutative diagram
\[
  \begin{tikzcd}
    \ethtpytype(Y_{\! t}) \arrow[d ]\arrow[r] & \ethtpytype(X_{s}) \arrow[d] \\
    \fib_t(\ethtpytype(g)) \arrow[r] &  \fib_{s}(\ethtpytype(f)). 
  \end{tikzcd}
\]
The vertical maps are equivalences since $g$ and $f$ are quasifibrations, respectively.
The upper horizontal map is an equivalence since $Y_{\!t} \to X_s$ is an isomorphism of schemes.
Consequently, also the lower horizontal map is an equivalence.
\end{proof}

%----------------------------------------------------------------------------------------------------------------------------------
\subsection{Geometric fibrations in characteristic zero are quasifibrations}
\label{subsec:Friedlander}

The goal of this section is to prove that geometric fibrations over a normal scheme in characteristic zero are quasifibrations.
\begin{definition}[\cite{Friedl-elfib}]
  \label{def:geometric-fibration}
    Let $f \from X \to S$ be a morphism of schemes.
    \begin{deflist}
        \item
        The map $f \from X \to S$ is said to be a \emph{special geometric fibration} if it admits a factorisation of the form
            \[
              \begin{tikzcd}
                X \arrow[rd, "f"'] \arrow[r, open, "j"] 
                & \bar{X} \arrow[d, "{\bar{f}}"] 
                & T \arrow[l, closed', "i"'] \arrow[ld, "{f|_{T}}"] 
                \\
                & S &
              \end{tikzcd}
            \]
            such that:
            \begin{deflist}
                \item $\bar{f} \from \bar{X} \to S$ is smooth and proper,
                \item 
                $\begin{tikzcd}[cramped, sep=small] i \from T \arrow[r, closed] & \bar{X} \end{tikzcd}$ 
                is a closed embedding with complement $\begin{tikzcd}[cramped, sep=small] j \from X \arrow[r, open] & \bar{X} \end{tikzcd}$, and 
               
                \item $T$ is the union of closed subschemes $T_{k}$ of pure codimension $c_{k}$ in $\bar{X}$ over $S$ such that each non-empty intersection $T_{k_{1}} \cap \ldots \cap T_{k_{s}}$ is smooth over $S$ of pure codimension $c_{k_{1}} + \ldots + c_{k_{s}}$.
            \end{deflist}
            \item The map $f \from X \to S$ is said to be a \emph{geometric fibration} if Zariski-locally on $S$ it is a special geometric fibration.
          \end{deflist}
\end{definition}

A relative curve is a special case of a geometric fibration.  

\begin{theorem}
\label{thm:generalisation-of-Friedlander}
Let $S$ be a normal qcqs scheme of characteristic zero.
Then every geometric fibration $f \from X \to S$ with geometrically connected fibres is a quasifibration.
\end{theorem}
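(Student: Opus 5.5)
The statement is Zariski-local on $S$ only after some care, so I would first reduce to the case of a special geometric fibration over a connected affine normal noetherian base. Noetherian approximation is delicate here, because normality need not pass to approximations. I would instead argue directly for a fixed geometric point $s$: replacing $S$ by a connected affine open neighbourhood $U$ of $s$ is legitimate provided we know that $\ethtpytype(X_U)\to\ethtpytype(X)$ and $\ethtpytype(U)\to\ethtpytype(S)$ induce an equivalence of homotopy fibres over $s$. So the plan splits into a local statement and a comparison statement.

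For the local statement, I would follow Friedlander's classical argument (\cite{Friedl-elfib}) in the language of profinite anima. Choose a geometric point $x$ of $X_s$. By \cref{lem:criterion-homotopy-pullback-fibres}-type arguments and the profinite Whitehead theorem \ref{thm:profinite-whitehead-theorem}, it suffices to show two things. First, $\ethtpytype(X_s)\to\fib_s$ induces an isomorphism on $\pi_0$, which follows from geometric connectedness of the fibres together with connectedness of the fibre of $\pi_0$. Second, we need isomorphisms on cohomology with local systems of finite coefficients, or equivalently a comparison of the Leray spectral sequence with the Serre spectral sequence of the fibration $\fib_s\to\ethtpytype(X)\to\ethtpytype(S)$.

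The geometric input is the smooth and proper base change theorem and the cohomological purity/tame specialization results for $\bar X$ with the normal crossings divisor $T$. In characteristic zero these show that $R^qf_*L$ is locally constant constructible for every finite locally constant $L$ on $X$, with stalks $H^q(X_s,L)$, and that it commutes with base change. On the homotopy side, the profinite Serre spectral sequence for $\ethtpytype(f)$ has $E_2=H^p(\ethtpytype(S), H^q(\fib_s,L))$. Normality of $S$ is used to ensure that $S$ is geometrically unibranch, so locally constant sheaves on $S$ correspond to $\etfdtlgrp(S)$-modules and $\ethtpytype$ behaves well; also, finite étale covers of $X_s$ spread out after finite étale base change on $S$. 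The latter gives the $\pi_1$-statement: we need $\etfdtlgrp(X_s)\to\etfdtlgrp(X)\to\etfdtlgrp(S)\to1$ to be exact with injective first map. Exactness in the middle is SGA1 X homotopy exact sequence (tame in char 0). Injectivity on the left is part of the same result for geometric fibrations with sections étale locally, or alternatively comes from lifting covers of $X_s$ via the local constancy of $R^0f_*$ applied to $\mathrm{Isom}$-sheaves.

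Given the $\pi_1$-comparison, I would pass to a universal cover along $\pi_1$ of $X_s$, i.e. twist by finite local systems, and then compare the two spectral sequences. Their agreement yields isomorphisms $H^*(\fib_s,L)\cong H^*(X_s,L)$ for all finite local systems pulled back along $\fib_s\to \ethtpytype(X)$. This family is cofinal, since $X_s$-covers come from $X$ after finite étale base change on $S$ by the previous step. Profinite Whitehead then concludes, because the $X_s$ are $\pi$-good enough: we are comparing profinite anima, so cohomological equivalence with twisted finite coefficients plus a $\pi_1$-isomorphism suffices.

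The main obstacle I expect is the local-constancy/base-change statement for $Rf_*$ of open (non-proper) $X$ together with the spectral sequence comparison in the profinite anima setting. This is where characteristic zero (tameness along $T$) and normality (unibranch base, so that local systems are encoded by $\etfdtlgrp(S)$ and the Serre spectral sequence exists with correct $E_2$) enter. The Zariski-locality comparison is then handled by applying the same cohomological argument to $X_U\to X$ over $U\to S$.
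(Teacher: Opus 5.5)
Your proposal has a genuine gap at the $\pi_1$-step, and it carries over into the cohomological comparison. The fibre sequence $\fib_s \to \ethtpytype(X) \to \ethtpytype(S)$ gives
\[
1 \to \operatorname{coker}\big(\pi_2(\ethtpytype(X)) \to \pi_2(\ethtpytype(S))\big) \to \pi_1(\fib_s) \to \ker\big(\etfdtlgrp(X) \to \etfdtlgrp(S)\big) \to 1 .
\]
So exactness of $\etfdtlgrp(X_s)\to\etfdtlgrp(X)\to\etfdtlgrp(S)\to 1$ with injective first map is not the statement you need. It is also false in general. Take $X=\bA^2\smallsetminus\{0\}\to\mathbb{P}^1$ over $\mathbb{C}$, the complement of the $0$- and $\infty$-sections of $\mathbb{P}(\dO\oplus\dO(-1))$. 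This is a special geometric fibration with fibres $\mathbb{G}_m$. Here $\etfdtlgrp(X)=1$ by purity, whereas $\etfdtlgrp(X_s)=\hat{\bZ}$, which comes exactly from $\pi_2(\ethtpytype(\mathbb{P}^1))=\hat{\bZ}$.

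\'Etale-local sections do not give injectivity. The paper obtains it only for hyperbolic curves, using centrality of the kernel \cite[Prop.~1.4]{Friedl-elfib} together with \cref{lem:centre-free}. The same example refutes your cofinality claim: no nontrivial Kummer cover of $\mathbb{G}_m$ extends to $X$ after a finite \'etale base change of the simply connected $\mathbb{P}^1$.

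Consequently your Leray--Serre comparison only sees local systems on $\fib_s$ pulled back from $\ethtpytype(X)$. Those cannot detect whether $\ethtpytype(X_s)\to\fib_s$ is an equivalence. Even for those local systems, you would still need an actual map between the two spectral sequences and a comparison argument with twisted coefficients, and you do not supply either.

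Your Zariski-localisation step is circular. Asking that the homotopy fibres of $X_U\to U$ and $X\to S$ over $s$ agree is a cartesianness statement of the same type as the theorem. It is also unnecessary, since local constancy of $R^qf_*L$ is already local on $S$.

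The paper does not reprove this geometric input. After reducing to $S$ noetherian and connected, it uses Hoyois' description of $\Pi$ by \'etale hypercoverings and Friedlander's relative hypercoverings $\HCet(f)$. These identify the image of $\fib_s(\Pi(f))$ in $\Pro(\h\!\Ani)$ with Friedlander's homotopy fibre. It then quotes \cite[Thm.~3.7]{Friedl-elfib} for the isomorphism on homotopy pro-groups. Finally, normality (profiniteness of the Artin--Mazur groups, \cref{prop:profiniteness}) lets it pass from $\trunc_{<\infty}\Pi$ to $\ethtpytype$.

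A self-contained repair would have to compare $\ethtpytype(X_s)$ with the full fibre $\fib_s$, including the contribution of $\pi_2(\ethtpytype(S))$, rather than with the kernel of $\etfdtlgrp(X)\to\etfdtlgrp(S)$. One way is a genuinely local input combined with a hyperdescent argument. The local input would be: over a strictly henselian base, the closed geometric fibre has the same profinite homotopy type as the total space. That is where smooth proper base change, tame purity and characteristic zero enter.
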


\begin{remark} 
A generalisation of \Cref{thm:generalisation-of-Friedlander} with a notion of geometric fibration that includes \'etale locally special geometric fibrations to the case of non-normal $S$ and  in arbitrary characteristic is the subject of \cite{QFiber}.  
\end{remark}

\begin{proof}[Proof of \Cref{thm:generalisation-of-Friedlander}]
The result essentially follows from the fibre comparison for geometric fibrations proven by Friedlander \cite[Theorem 3.7]{Friedl-elfib}. 

By \cref{lem:quasifibrations-compatibility-with-limits} and noetherian approximation, we may assume that  $S$ is noetherian and connected. 
We consider the  cofiltered  simplicially enriched category $\HC^{\et}(S)$ of étale hypercoverings of $S$.
Then, by \cite[Theorem 5.1]{Hoyois}, the homotopy type $\Pi(S)$  is corepresented by the simplicially enriched cofiltered diagram
\[
	{\scriptstyle \Pi} \from \HCet(S) \longrightarrow \ssets,\quad U_{\!\bullet} \mapsto {\scriptstyle \Pi} U_{\!\bullet},
\]
and similarly for $X$.
For a morphism $f \colon X \to S$ there is a relative version $\HCet(f)$ (denoted by $J_{\!f}$ in \cite[Definition (1.1)]{Friedlander-fibel}) defined as follows.
\begin{itemize}
	\item 
	An object of $\HCet(f)$ is a triple 
	$(U_{\!\bullet} \to X, V_{\!\bullet} \to S, U_{\!\bullet} \to X \times_S V_{\!\bullet})$ 
	consisting of \'etale hypercoverings $U_{\!\bullet} \in \HC^{\et}(X)$, $V_{\!\bullet} \in \HC^{\et}(S)$ 
	and a morphism of \'etale hypercoverings $U_{\!\bullet} \to X \times_S V_{\!\bullet}$ in $\HC^{\et}(X)$.
    	We usually simply write $U_{\!\bullet} \to X \times_S V_{\!\bullet}$ for an object of $\HC^{\et}(f)$.
	\item
	A morphism $(U'_{\!\bullet} \to X \times_S V'_{\!\bullet}) \to (U_{\!\bullet} \to X \times_S V_{\!\bullet})$ is a pair $(U'_{\!\bullet} \to U_{\!\bullet}, V'_{\!\bullet} \to V_{\!\bullet})$  such that the obvious induced diagram commutes.
\end{itemize}
The category $\HC^{\et}(f)$ is a cofiltered simplicially enriched category in the sense of \cite[\S 5]{Hoyois}.
The functors 
\[
	\begin{tikzcd}[column sep=small]\HC^{\et}_{\infty}(X) 
	& \HC^{\et}_{\infty}(f) \arrow[l, "s"'] \arrow[r, "t"] 
	& \HC^{\et}_{\infty}(S)
	\end{tikzcd}
\]
induced on underlying $\infty$-categories by the natural source and target functors are cofinal, as can be seen by combining \cite[Lemma 5.4]{Hoyois} with \cite[Prop. (1.4)]{Friedlander-fibel}.
Therefore, the diagram
\[
   \begin{tikzcd}
     \HC^{\et}_{\infty}(f) \arrow[d, "s"'] \arrow[r, "t"] & \HC^{\et}_{\infty}(S) \arrow[d, "{\scriptstyle \Pi}"] \\
     \HC^{\et}_{\infty}(X) \arrow[r, "{\scriptstyle \Pi}"] & \Ani
   \end{tikzcd}
 \]
 induces a map $\Pi(X) \to \Pi(S)$ that, by the same argument as in the proof of \cite[Thm. 5.1]{Hoyois}, coincides with $\Pi(f)$.
 Let $s$ be a geometric point of $S$ and note that the image of $\fib_{s}(\Pi(f)) \in \Pro(\Ani)$ under the canonical functor $\Pro(\Ani) \to \Pro(\h\!\Ani)$ coincides with Friedlander's homotopy fibre $\fh(f_\et^r)$ of \cite[\S 2]{Friedlander-fibel}.
 It follows that 
 \[
    \trunc_{<\infty}\!\Pi(X_{s}) \to \trunc_{<\infty}\!\fib_{s}(\Pi(f)) = \fib_{s}(\trunc_{<\infty}\!\Pi(f))
 \]
 is an equivalence, as this can be checked on homotopy groups, which can be read off from the image under $\Pro(\Ani) \to \Pro(\h\!\Ani)$, where it is an immediate consequence of \cite[Thm. 3.7]{Friedl-elfib}.
 Consider the canonical commutative diagram
 \[
    \begin{tikzcd}
      \trunc_{<\infty}\!\Pi(X_{s}) \arrow[r] \arrow[d] & \trunc_{<\infty}\!\Pi(X) \arrow[rr, "{\trunc_{<\infty}\!\Pi(f)}"] \arrow[d] && \trunc_{<\infty}\!\Pi(S) \arrow[d] \\
      \ethtpytype(X_{s}) \arrow[r] & \ethtpytype(X) \arrow[rr, "{\ethtpytype(f)}"'] && \ethtpytype(S).
    \end{tikzcd}
 \]
Since $X_{s}$, $X$, and $S$ are noetherian and normal, their étale homotopy groups are already profinite by \Cref{prop:profiniteness}. 
Hence all vertical maps appearing in the above diagram are equivalences (see also \cite[Thm. 3.6.5]{DAGXIII} for a modern proof using shapes).
Summarizing, we see that
 \[
    \ethtpytype(X_{s}) = \trunc_{<\infty}\!\Pi(X_{s}) \isomto \fib_{s}(\tau_{<\infty}\Pi(f)) = \fib_{s}(\ethtpytype(f))
 \]
 is an equivalence as claimed.
\end{proof}

%----------------------------------------------------------------------------------------------------------------------------------
\subsection{The base change map}
\label{subsec:the-base-change-map}

As an application, we are able to define the arrow $\diagr{2}$ in the diagram \eqref{eq:pre main square} of the introduction.

\begin{construction}
\label{constr:basechange-for-maps-of-etale-homotopy-types}
Let $Y$ be a qcqs scheme over $S$ and let $S' \to S$ be a morphism of
qcqs schemes.
The projection maps of the fibre product $Y' = Y \times_S S'$  induce a map (unique up to homotopy)
\[
    c_Y \colon \ethtpytype(Y') \longrightarrow \ethtpytype(Y) \times_{\ethtpytype(S)} \ethtpytype(S').
\]
If $X \to S$ is a quasifibration such that also the base change $X' = X \times_S S'$ is a quasifibration, then by \Cref{prop:quasifibrations-compatibility-et-htpytype-with-pullbacks} the canonical map $c_X \colon \ethtpytype(X') \isomto \ethtpytype(X) \times_{\ethtpytype(S)} \ethtpytype(S')$ is an equivalence. This leads to a diagram of maps between mapping anima
\[
  \begin{tikzcd}
            \map_{\ethtpytype(S)} \big(\ethtpytype(Y), \ethtpytype(X) \big) \arrow[rr, "{\blank \times_{\ethtpytype(S)} \ethtpytype(S')}"]
            \arrow[d, dashed] && \map_{\ethtpytype(S')} \big(\ethtpytype(Y) \times_{\ethtpytype(S)} \ethtpytype(S'),\ethtpytype(X) \times_{\ethtpytype(S)} \ethtpytype(S') \big) \arrow[d, "{\blank \circ c_Y}"] \\
            \map_{\ethtpytype(S')} \big(\ethtpytype(Y'), \ethtpytype(X') \big)
             \arrow[rr, "{c_X \circ \blank}", "\sim"']
            && \map_{\ethtpytype(S')} \big(\ethtpytype(Y'), \ethtpytype(X) \times_{\ethtpytype(S)} \ethtpytype(S') \big)
  \end{tikzcd}
\]
\noindent
We call the (unique up to homotopy) dashed map the \emph{base change map} and denote it by
\[
  (\blank)_{S'} \colon  \map_{\ethtpytype(S)}\big(\ethtpytype(Y), \ethtpytype(X) \big) \longrightarrow  \map_{\ethtpytype(S')} \big(\ethtpytype(Y'), \ethtpytype(X') \big), \quad \ph \mapsto \ph_{S'} .
\]
We refer to $\ph_{S'}$ as the \emph{base change along $S' \to S$}.
Note that the base change of maps between étale homotopy types is only defined if the homotopy type of the target is compatible with base change.
\end{construction}

%%% Local Variables:
%%% mode: LaTeX
%%% TeX-master: "../haupt"
%%% End:

%% file: content/spreading-out-homotopies.tex
We start with an elementary geometric observation that yields the injectivity of the arrow $\diagr{1}$ in diagram \eqref{eq:pre main square} of the introduction.

\begin{lemma}
    \label{prop:injectivity-via-generic-fibre}
    Let $S$ be an irreducible scheme with generic point $\eta \in S$ and let $Y \to S$ be a scheme morphism such that  $Y$ is reduced and the generic fibre $Y_{\!\eta}$ is dense in $Y$.
    Let moreover $X\to S$ be a separated scheme morphism.
    Then the base change map
    \[
        \blank \times_{S} \eta \colon \Hom_{S}(Y, X) \longrightarrow{} \Hom_{\eta}(Y_{\!\eta}, X_{\eta})
    \]
    is injective.
\end{lemma}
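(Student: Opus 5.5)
The plan is the standard argument via equalizers and separatedness. Let $f, g \colon Y \to X$ be two $S$-morphisms whose base changes $f_\eta, g_\eta \colon Y_{\!\eta} \to X_\eta$ agree. We have to show $f = g$.

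Consider the $S$-morphism $(f,g) \colon Y \to X \times_S X$. Pull back the diagonal $\Delta \colon X \to X \times_S X$ along it. This gives the equalizer $E = Y \times_{(f,g), X\times_S X, \Delta} X$, which is a subscheme $E \subseteq Y$. Since $X \to S$ is separated, $\Delta$ is a closed immersion, so $E \to Y$ is a closed immersion. It has the universal property that a morphism $T \to Y$ factors through $E$ if and only if its composites with $f$ and $g$ agree.

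Next, base change is compatible with this construction: $E \times_S \eta$ is the equalizer of $f_\eta$ and $g_\eta$ inside $Y_{\!\eta}$. By hypothesis $f_\eta = g_\eta$, so the inclusion $Y_{\!\eta} \to Y$ factors through $E$. Concretely, $f$ and $g$ agree after composing with $Y_{\!\eta} \to Y$, since $f_\eta$ is $f$ composed with this inclusion followed by the identification with $X_\eta$. Hence the closed subscheme $E$ contains the subset $Y_{\!\eta}$. Since $Y_{\!\eta}$ is dense in $Y$, the underlying closed set of $E$ is all of $Y$.

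Finally, a closed subscheme of a reduced scheme whose underlying set is everything equals the whole scheme. Indeed, its ideal sheaf consists of sections vanishing at every point, hence nilpotent locally, hence zero. So $E = Y$, i.e. $f = g$.

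No step here is a genuine obstacle. The only point requiring care is the factorisation in the second step: one should argue scheme-theoretically that $Y_{\!\eta} \to Y$ factors through $E$, using the universal property of the equalizer rather than mere pointwise agreement. This works because the factorisation then holds at the level of schemes, not just on points.
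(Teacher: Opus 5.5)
Your proposal is correct and follows the paper's proof: separatedness makes the equaliser of $f$ and $g$ a closed subscheme of $Y$, and it contains the dense generic fibre $Y_{\!\eta}$, so $f = g$. You also spell out the reducedness step (a closed subscheme of a reduced scheme with full support is the whole scheme), which the paper leaves implicit.
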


\begin{proof}
  Since $X/S$ is separated, the equaliser $Z$ of  two $S$-maps $f,g \colon Y \to X$ is closed in $Y$.
  If $Z$ contains $Y_{\!\eta}$ then $f$ and $g$ agree because $Y_{\!\eta}$ is dense in $Y$ by assumption.
\end{proof}

The goal of this section is to prove the following \cref{thm:spreading-out-homotopies}, which is an analogue of \Cref{prop:injectivity-via-generic-fibre} for profinite étale homotopy types, and implies the injectivity of arrow $\diagr{2}$ in the diagram \eqref{eq:pre main square} of the introduction.
Recall \cref{subsec:monomorphisms} for the notion of monomorphisms of anima. 

\begin{theorem}[spreading out étale homotopies]
\label{thm:spreading-out-homotopies} 
Let $S'\to S$ be a birational morphism of normal noetherian schemes, $X\to S$ a morphism and
$Y \to S$ a flat and dominant morphism. Denote the induced base changes by $X' = X \times_S S'$ and $Y' = Y \times_S S'$ and assume that $Y$ and $Y'$ are normal and noetherian.

Moreover, assume that  $X \to S$ has the  property, that for every morphism $T\to S$ with $T$ normal and noetherian, the base change $X_T\to T$ is a quasifibration with profinite étale $1$-truncated geometric fibres. 
  
Then the base change map
  \[
     	(\blank)_{S'} \from \map_{\ethtpytype(S)}\big(\ethtpytype(Y), \ethtpytype(X)\big) 
     	\longrightarrow 
	\map_{\ethtpytype(S')}\big(\ethtpytype(Y'), \ethtpytype(X')\big)
  \]
  is a monomorphism of anima.
\end{theorem}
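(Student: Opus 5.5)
The plan is to identify the base change map with precomposition along $\ethtpytype(Y') \to \ethtpytype(Y)$, and then to prove that this precomposition is a monomorphism. The input is that $\ethtpytype(X)$ is relatively $1$-truncated over $\ethtpytype(S)$, while $\ethtpytype(Y')\to\ethtpytype(Y)$ is surjective on $\pi_0$ and $\pi_1$.

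\emph{Step 1 (reformulation).} Write $q \colon Y' \to Y$ for the projection. By \Cref{constr:basechange-for-maps-of-etale-homotopy-types}, $\ph_{S'}$ corresponds under $c_X$ to the composite
\[
\ethtpytype(Y') \xrightarrow{c_Y} \ethtpytype(Y)\times_{\ethtpytype(S)}\ethtpytype(S') \xrightarrow{\ph\times \id} \ethtpytype(X)\times_{\ethtpytype(S)}\ethtpytype(S').
\]
Maps over $\ethtpytype(S')$ into a pullback $\ethtpytype(X)\times_{\ethtpytype(S)}\ethtpytype(S')$ are the same as maps over $\ethtpytype(S)$ into $\ethtpytype(X)$. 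Hence, after composing with the equivalence $c_X\circ\blank$, the base change map becomes
\[
\ethtpytype(q)^\ast \colon \map_{\ethtpytype(S)}\big(\ethtpytype(Y),\ethtpytype(X)\big) \to \map_{\ethtpytype(S)}\big(\ethtpytype(Y'),\ethtpytype(X)\big).
\]
It therefore suffices to show that $\ethtpytype(q)^\ast$ is a monomorphism.

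\emph{Step 2 (relative truncatedness of the target).} The hypothesis applied to $T=S$ says that $X\to S$ is a quasifibration with profinite étale $1$-truncated geometric fibres. By \Cref{def:quasifibration}, every homotopy fibre of $\ethtpytype(X)\to\ethtpytype(S)$ over a point of $\ethtpytype(S)$ coming from a geometric point is then $1$-truncated. Since $\pi_0\ethtpytype(S)=\ethtpygrp[0][S]$, these points exhaust all components, so $\ethtpytype(X)\to\ethtpytype(S)$ is a $1$-truncated morphism of profinite anima.

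\emph{Step 3 (abstract criterion).} Let $p\colon E\to B$ be a $1$-truncated map and let $A'\to A$ be a map over $B$ that is surjective on $\pi_0$ and on $\pi_1$ at all base points, i.e.\ $0$-connected with $\pi_1$-surjectivity, so that its relative $1$-truncation is an effective epimorphism on $\tau_{\le 0}$ of loop objects. Then
\[
\map_B(A,E)\to\map_B(A',E)
\]
is a monomorphism. I would prove this in two stages. First reduce to $\tau_{\le 1}$ relative to $B$, using the universal property of relative truncation. Then observe that a map $A'\to A$ of groupoids over $B$ that is essentially surjective and full gives a faithful restriction functor on sections into a $1$-truncated family. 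The $\pi$-finite case is classical. The profinite case follows by writing $E$ as a cofiltered limit of $1$-truncated $\pi$-finite maps and using that monomorphisms are stable under limits (\cref{subsec:monomorphisms}) together with the profinite Whitehead theorem \ref{thm:profinite-whitehead-theorem}.

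\emph{Step 4 (geometric input).} It remains to check the hypotheses of Step 3 for $\ethtpytype(Y')\to\ethtpytype(Y)$.
\begin{itemize}
\item Since $S'\to S$ is birational and $Y\to S$ is flat and dominant, there is a dense open $V\subseteq S$ over which $S'\to S$ is an isomorphism. Then $U=Y_V\subseteq Y$ is dense open and $U'=U$ sits inside $Y'$.
\item Because $Y$ is normal, each connected component of $Y$ is irreducible and meets $U$, so $\pi_0$ is surjective.
\item By SGA1, $\etfdtlgrp(U)\to\etfdtlgrp(Y)$ is surjective for the normal scheme $Y$. This map factors through $\etfdtlgrp(Y')$, which gives $\pi_1$-surjectivity at base points in $U$, and by connectedness at all base points.
\end{itemize}

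I expect the main obstacle to be Step 3 in the profinite, relative-over-$\ethtpytype(S)$ setting: truncations and connectivity in $\Pro(\pifinAni)$ over a non-truncated base need care. If the direct argument fails, my fallback is proétale descent (\cref{rec:proetale-hyperdescent}). Choose a proétale hypercovering of $S$ by normal schemes $T$ that are disjoint unions of strictly henselian local schemes. Both mapping anima become limits of the corresponding mapping anima over the $\ethtpytype(T)$, using the base-change hypothesis for all normal noetherian $T$. Over each such $T$ the base is essentially a point, so the claim reduces to the absolute statement for $1$-truncated targets, and monomorphisms are preserved under limits.
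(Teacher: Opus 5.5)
Your Step~1 is fine; the paper makes the same identification, level-wise, at the end of its proof. The proposal stalls exactly where you suspect, though: Step~3 carries the whole content of the theorem, and your sketch does not establish it in $\Pro(\Ani_\pi)$.

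Relative $1$-truncation over the non-truncated base $B=\ethtpytype(S)$ does not produce groupoids. For instance, for $E_i=\point\to B_i=\K(\bZ/2,2)$ the anima $\map_{B_i}(A,E_i)$ consists of null-homotopies of a class in $H^2(A,\bZ/2)$, so it is not a functor of $\tau_{\le 1}A$. More seriously, writing $E\to B$ as a limit of $1$-truncated $\pi$-finite maps $E_i\to B_i$ only makes the \emph{target} finite. You are left with $\map_{B_i}(A,E_i)\to\map_{B_i}(A',E_i)$ for a map $A'\to A$ of profinite anima. These mapping anima are filtered colimits over pro-presentations of the sources, and $\pi_0$-bijectivity and $\pi_1$-surjectivity of $A'\to A$ need not hold at any finite stage. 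So neither the classical groupoid argument nor the profinite Whitehead theorem gives the monomorphism.

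The paper gets around this geometrically, in three moves.
\begin{enumerate}
\item It first reduces to $Y=S$ (base-changing $X$ along $Y\to S$ and pulling back along the diagonal) and to $S'=\eta$. Resolving the base then also resolves the source.
\item It resolves $S$ by $w$-contractible affines $S_k$, whose homotopy types are $0$-truncated (\Cref{lem:w-contractibly-is-0-connective}). The relatively $1$-truncated targets thereby become absolutely $1$-truncated.
\item It shows that $\ethtpytype(S_{k,\eta})\to\ethtpytype(S_k)$ is an epimorphism in $\Pro(\Ani_{\pi,\le 1})$ (this is \Cref{lem:epicrit}). It uses the ind-étale presentation of $S_k$ to pass to finite levels, where $\B\!\etfdtlgrp(T_\eta)\to\B\!\etfdtlgrp(T)$ is an epimorphism by \Cref{cor:surjection-of-groups-induces-epimorphism-of-groupoids}.
\end{enumerate}
Your fallback goes in this direction but omits the reduction to $Y=S$. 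Without it the resolved sources are the pullbacks $\ethtpytype(Y)\times_{\ethtpytype(S)}\ethtpytype(S_k)$. These are not étale homotopy types of schemes and offer no presentation on which to run the level-wise argument. Also, $w$-contractible schemes are not disjoint unions of strictly henselian local schemes: their components form a profinite space, so $\ethtpytype(S_k)$ is a profinite set rather than a point.

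Two further points need repair even granting Step~3.

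First, the criterion as you state it is false, because $\pi_0$-surjectivity is not enough. Take $B=\point$, $E=\B\!G$ with $G\neq 1$ finite, and $A'=\point\sqcup\point\to A=\point$. Restriction is then the diagonal $\B\!G\to\B\!G\times\B\!G$, which is not a monomorphism. You need connected fibres, i.e.\ $\pi_0$-bijectivity together with $\pi_1$-surjectivity (your ``essentially surjective and full''). The conclusion you need is that restriction is \emph{fully} faithful, not merely faithful. In your application $\pi_0$-bijectivity does hold, since $Y'\to Y$ is birational by flatness and $Y,Y'$ are normal, but this has to be checked.

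Second, in Step~4 a dense open $V\subseteq S$ over which $S'\to S$ is an isomorphism exists when $S'\to S$ is of finite type. It does not exist for $S'=\eta$ unless $\eta$ is open in $S$, and $S'=\eta$ is the case needed for \cref{maintheorem}. Use instead that the generic points of $Y$ factor through $Y'$. Then use that, for $Y$ normal, $\etfdtlgrp$ of the generic point of a connected component surjects onto $\etfdtlgrp$ of that component.
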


\begin{remark}
  \label{rem:why-spreading-out}
  Consider the special case where $S$ is normal and connected, $Y\to S$ is smooth, and $S' = \eta \to S$ is the inclusion of the generic point. Let $\ph, \psi \in \map_{\ethtpytype(S)}(\ethtpytype(Y), \ethtpytype(X))$.   Then \Cref{thm:spreading-out-homotopies} asserts that any homotopy $H_{\eta} \from \ph_{\eta} \to \psi_{\eta}$ over $\ethtpytype(\eta)$ between the induced maps on the profinite homotopy types of the generic fibres uniquely \emph{spreads out} to a homotopy $H \from \ph \to \psi$ over $\ethtpytype(S)$.
  This is why we refer to the above theorem as ``spreading out étale homotopies''.
\end{remark}

The proof of \Cref{thm:spreading-out-homotopies} will occupy the rest of this section, so let us explain the proof strategy first.  
Recall that a ring $A$ is \emph{$w$-contractible} if every faithfully flat ind-étale map $A \to B$ has a section, see  \cite[Definition 2.41]{BS15}. 
The key idea of the proof is to simplicially resolve $S$ by affine w-contractible schemes. 
This allows us to replace $S$ by an affine $w$-contractible $S$-scheme $\Spec(A)$ and $X$ by the respective base change, although the new $X \to S$ are typically not noetherian anymore. 
Thus $X$ becomes profinite \'etale $1$-truncated because, by assumption, $X \to S$ is relative profinite étale $1$-truncated. 
Eventually, the proof boils down to the fact that the inclusion of the generic point 
$\eta$ of a normal, connected scheme $S$ induces a surjection $\etfdtlgrp(\eta) \lsurj \etfdtlgrp(S)$ on étale fundamental groups.

\begin{lemma}\label{lem:w-contractibly-is-0-connective}
  If $S= \Spec(A)$ is a $w$-contractible affine scheme, then $\ethtpytype(S)$ is $0$-truncated, i.e., for every geometric point $s$ of $S$ the homotopy groups $\ethtpygrp[k][S, s]$ vanish for $k \geq 1$.
\end{lemma}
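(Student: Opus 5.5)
We want to show that $\ethtpytype(S)$ is $0$-truncated, i.e.\ equivalent to a profinite set. The plan is to use the structure of $w$-contractible rings from \cite{BS15} together with continuity of $\ethtpytype$ (\Cref{rec:continuity-of-the-etale-homotopy-type}). The key input is \cite[Lemma 2.4.8 / Theorem 2.4.9]{BS15}. By Bhatt--Scholze, a $w$-contractible ring $A$ is $w$-local, and all its local rings at closed points are strictly henselian. Moreover, every étale cover of $S$ is refined by a disjoint union of Zariski opens, hence admits a section after such a refinement. Equivalently, every étale sheaf on $S$ is determined by its restriction to the Zariski site of the profinite set $\pi_0(S)$, via the closed subscheme of closed points $S^c \subset S$, where $S^c \cong \pi_0(S)$ is a profinite set of strictly henselian points.

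Concretely, I would argue via cohomology and $\pi_1$, using the criteria in the paper. First, for $\pi_0$: $\pi_0(\ethtpytype(S))$ is the profinite set $\pi_0(S)$. For the higher homotopy groups, it suffices to show the following for each geometric point $s$: every finite étale cover of every connected component is trivial, so $\ethtpygrp[1][S,s]=0$, and $\rH^i(S,A')=0$ for $i\ge1$ and all finite abelian (and, for $i=1$, finite nonabelian) coefficients $A'$. Then the profinite Hurewicz and Whitehead theorems (\cref{lem:profinite-hurewicz-theorem}, \cref{thm:profinite-whitehead-theorem}) give the vanishing of all $\ethtpygrp[k][S,s]$, just as in the proof of \cref{prop:criterionKpi1}. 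The vanishing of étale cohomology in positive degrees holds because $\Gamma(S,-)$ is exact on abelian étale sheaves: every étale surjection $U\to S$ has a section after pulling back to a Zariski cover, and since $S$ is $w$-local with $\pi_0(S)$ extremally disconnected, Zariski covers of $S$ split as well \cite[Lemma 2.4.8]{BS15}. Torsors under finite groups are likewise trivial, which gives $\pi_1=0$ on each component.

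A cleaner alternative is to argue directly with shapes. Since $\Gamma(S,-)$ on $\ettopos{S}$ is computed by restriction to the closed points, and the strict henselizations there have trivial étale topos, the shape of $\ethyptopos{S}$ restricted to $\pi$-finite anima is that of the profinite set $\pi_0(S)$. One checks that $\map(\ethtpytype(S),B)\simeq \Gamma(S,\Gamma^*B)\simeq \Map_{cts}(\pi_0(S),B)$ for $\pi$-finite $B$. For truncated $B$, the sheaf $\Gamma^*B$ is a hypersheaf (by the remark on sheaves versus hypersheaves), and the global sections of a sheaf of anima on $S$ agree with the colimit over clopen decompositions because every étale hypercover is refined by split ones.

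The main obstacle is the precise reduction from étale descent on $S$ to the profinite set $\pi_0(S)$, i.e.\ showing that étale (hyper)covers of a $w$-contractible affine split after clopen refinement. I would take this directly from \cite[Lemma 2.4.8, Theorem 2.4.9]{BS15}, and the remaining steps are formal.
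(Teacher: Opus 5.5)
Your cohomological route is viable, but as written the Hurewicz step has a gap. \cref{lem:profinite-hurewicz-theorem} applies only to connected (indeed $n$-connective) profinite anima, whereas $\pi_0(\ethtpytype(S))=\pi_0(S)$ is in general an infinite profinite set. In \cref{prop:criterionKpi1} the space $\widetilde X$ is connected, so ``just as there'' does not carry over. What you actually need is, for the connected component $Z$ of $s$: $\rH^i(Z,M)=0$ for $i\ge 1$, $\hat\pi_1(Z,s)=1$, and $\hat\pi_k(Z,s)\cong\hat\pi_k(S,s)$ for $k\ge 1$. Vanishing of $\rH^i(S,M)$ and triviality of torsors on $S$ itself do not immediately give the first two.

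The fix is as follows. Every clopen $U\subseteq S$ is again $w$-contractible: $A=\mathcal{O}(U)\times\mathcal{O}(S\smallsetminus U)$, and a faithfully flat ind-étale $\mathcal{O}(U)$-algebra $B'$ yields the faithfully flat ind-étale $A$-algebra $B'\times\mathcal{O}(S\smallsetminus U)$. Then write $Z=\lim_{U\supseteq Z}U$ and use continuity of étale cohomology, of finite étale covers, and of $\ethtpytype$. After that, Hurewicz and Whitehead applied to the connected $\ethtpytype(Z)$ finish the argument.

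Some justifications also need correcting. Splitting of étale coverings is immediate from the definition: an étale covering of the affine $S$ is refined by a single faithfully flat étale, hence ind-étale, map $A\to B$. Neither Zariski refinement nor extremal disconnectedness of $\pi_0(S)$ is needed here; the latter only matters for ind-étale covers. Your claim that every étale sheaf on $S$ is determined by its restriction to $\pi_0(S)$ is false. Take $S$ the spectrum of a strictly henselian discrete valuation ring, which is $w$-contractible: sheaves supported on the generic point are invisible there. Only global sections are computed on the closed points. Likewise, ``trivial étale topos'' should read ``trivial shape''.

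The paper also reduces to $Z$, but then observes directly that $Z$ is the spectrum of a strictly henselian local ring. Indeed, $A$ is $w$-strictly local, $Z$ contains a unique closed point $x$, and $\mathcal{O}(Z)$ is a quotient of the strictly henselian ring $A_x$. Such a scheme has contractible profinite étale homotopy type, because global sections equal the stalk at the closed point.

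Since the reduction to components is unavoidable, the paper's argument is shorter than your cohomology-vanishing plus Hurewicz induction. Your version has the modest advantage of using only that étale coverings of clopens of $S$ split, without identifying the components. Your shape-theoretic alternative is essentially the global form of the paper's argument. However, there the identification $\Gamma(S,\Gamma^{*}B)\simeq\Map_{\mathrm{cts}}(\pi_0(S),B)$ for non-discrete $\pi$-finite $B$ is asserted rather than proved.
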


\begin{proof}
Let $Z=\Spec(B)$ be the connected component of $s$ in $S$.
Then $\ethtpygrp[k][Z,s] \to \ethtpygrp[k][S,s]$ is an isomorphism for $k\ge 1$. 

By \cite[Lemma 2.4.2]{BS15}, $A$ is w-strictly local, in particular, all local rings of $A$ at closed points are  strictly henselian by \cite[Lemma 2.2.9]{BS15}.
By \cite[Lemma 2.1.4]{BS15}, $Z$ contains a unique closed point $x$ of~$S$, hence $B$ is local.
Furthermore, $B = B_x$ is strictly henselian being a quotient of the strictly henselian ring $A_x$.
We conclude that $\widehat{\Pi}(Z)$ is contractible and thus its homotopy groups vanish.
\end{proof}

\begin{lemma}\label{eta-one-truncated}
Let $S=\Spec(A)$ be the spectrum of a normal domain $A$ with quotient field $Q(A)$ and let $\eta=\Spec(Q(A))\hookrightarrow S$ be its generic point. 
Furthermore, let $A\to B$ be an ind-étale ring homomorphism and $T=\Spec(B)$. 
Then $\ethtpytype(T_{\!\eta}) \in \Pro(\Ani_{\pi,\le 1})$, i.e., for every geometric point $t$ of $T_{\!\eta}$, we have
\[
  \ethtpygrp[n][T_{\!\eta},t] = 0 \quad \text{for } n\ge 2.
\]
Moreover, the natural homomorphism
\[
  \etfdtlgrp(T_{\!\eta},t) \longrightarrow \etfdtlgrp(T,t)
\]
is surjective and the natural map $\ethtpygrp[0][T_{\!\eta}] \rightarrow \ethtpygrp[0][T]$ is a bijection.
\end{lemma}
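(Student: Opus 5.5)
We reduce to the case of étale $A$-algebras of finite type and then pass to the limit. Write $B = \colimit_i B_i$ as a filtered colimit of étale $A$-algebras $B_i$, and set $T_i = \Spec(B_i)$. Then $T = \limit_i T_i$ and $T_{\!\eta} = \limit_i (T_i)_\eta$, and all transition maps are affine. By \cref{rec:continuity-of-the-etale-homotopy-type}, $\ethtpytype(T_{\!\eta}) = \limit_i \ethtpytype((T_i)_\eta)$, and likewise for $T$. Cofiltered limits of $1$-truncated profinite anima are $1$-truncated. Surjectivity of profinite groups is preserved under cofiltered limits, by compactness. Bijectivity on $\pi_0$ passes to the limit as well, since $\pi_0$ of a profinite anima is the limit of the finite sets $\pi_0$. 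So it suffices to treat $B$ étale over $A$.

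In that case $T$ is étale over the normal scheme $S$, hence normal. We would like $T$ noetherian, but $A$ need not be; however, every assertion only concerns $T$ through its connected components and generic points. So we argue directly. The generic fibre is $T_{\!\eta} = \Spec(B \otimes_A Q(A))$, a finite étale $Q(A)$-algebra, because $B$ is étale of finite presentation over $A$. It is therefore a finite disjoint union $\bigsqcup_j \Spec(L_j)$ of spectra of finite separable field extensions $L_j / Q(A)$. By \cref{prop:hyperbolic-curves-are-K-pi-1}~\ref{propitem:fields-are-k-pi-1}, each $\Spec(L_j)$ is a profinite étale $\K(\pi,1)$. Hence $\ethtpytype(T_{\!\eta}) = \bigsqcup_j \B\!\absGal{L_j}$ is $1$-truncated, and the vanishing of $\ethtpygrp[n][T_{\!\eta},t]$ for $n \geq 2$ follows.

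For $\pi_0$ and $\pi_1$ we use normality of $T$. A normal scheme étale over the integral normal scheme $S$ is a disjoint union of normal integral schemes, and each component is dominant over $S$ because étale maps are open. If there are only finitely many irreducible components (true here, since they correspond to the finitely many points of the generic fibre), then the connected components of $T$ are exactly the closures of the points $\Spec(L_j)$, and each is integral and normal. This gives the bijection $\ethtpygrp[0][T_{\!\eta}] \isomto \ethtpygrp[0][T]$. On each component $T^j$ with generic point $\Spec(L_j)$, the map $\etfdtlgrp(\Spec L_j) \to \etfdtlgrp(T^j)$ is surjective by the standard fact (SGA1 V 8.2) for normal integral schemes. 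The reason is that a connected finite étale cover of $T^j$ is normal and connected, hence integral, so its generic fibre is connected.

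The main obstacle is the limit step. We must check that surjectivity on $\pi_1$ and bijectivity on $\pi_0$, stated for all base points simultaneously, are stable under cofiltered limits of profinite anima. Fixing a compatible base point and using that $\pi_1$ of a limit of $1$-truncated profinite anima is the limit of the profinite fundamental groups (Mittag-Leffler holds automatically for profinite sets and groups), surjectivity follows from the compactness of nonempty limits. For $\pi_0$, we use that $\pi_0$ commutes with cofiltered limits in $\Pro(\Ani_\pi)$.
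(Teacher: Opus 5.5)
Your proposal is correct and follows essentially the same route as the paper. Both reduce, via continuity of $\ethtpytype$ under cofiltered limits, to $B$ étale over $A$; observe that $T$ is then a finite disjoint union of normal integral schemes whose generic points make up $T_{\!\eta}$, a finite disjoint union of spectra of fields and hence of profinite étale $\K(\pi,1)$'s; and invoke the standard surjectivity of $\etfdtlgrp$ from the generic point of a normal integral scheme. You additionally spell out why the three assertions pass to cofiltered limits and why no noetherian hypothesis on $A$ is needed, both of which the paper leaves implicit.
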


\begin{proof}
  By \Cref{rec:continuity-of-the-etale-homotopy-type}, all assertions are stable under limits.
  Hence we may assume that $B$ is an étale $A$-algebra. Then $T$ is the disjoint union of finitely many connected components which are spectra of normal domains, and $T_{\!\eta}$ is the disjoint union of their generic points.
  This shows the statement on $\hat\pi_0$, and the surjectivity of the map on $\etfdtlgrp$ is a well known fact, see \stacks{0BQI}.

Moreover, since spectra of fields are étale $K(\pi,1)$-spaces by \Cref{prop:hyperbolic-curves-are-K-pi-1}~\ref{propitem:fields-are-k-pi-1}, we conclude the vanishing of the higher homotopy groups of $T_{\!\eta}$. 
\end{proof}

\begin{lemma}\label{lem:epicrit}
Let $S=\Spec(A)$ be the spectrum of a normal domain $A$ with quotient field $Q(A)$ and let $\eta=\Spec(Q(A))\hookrightarrow S$ be its generic point. 
Furthermore, let $A\to B$ be an ind-étale ring homomorphism and $T=\Spec(B)$. 
Then the natural map
\[
    \ethtpytype(T_{\!\eta}) = \ethtpytype_{\le 1}(T_{\!\eta}) \longrightarrow\ethtpytype_{\le 1}(T)
\]
is an epimorphism in $\truncpfAni[1]$.
\end{lemma}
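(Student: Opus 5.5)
The plan is to reduce the claim to a homotopy-group criterion for epimorphisms between $1$-truncated profinite anima, and then feed in \cref{eta-one-truncated}. Recall that a map $u\colon A\to B$ in $\truncpfAni[1]$ is an epimorphism if, for every $C\in\truncpfAni[1]$, precomposition
\[
u^{*}\colon \map(B,C)\longrightarrow \map(A,C)
\]
is a monomorphism of anima. Concretely, $u^*$ must be injective on $\pi_0$ and induce isomorphisms on all $\pi_1$ (it is automatically $0$-truncated since $C$ is $1$-truncated).

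The criterion I would prove is the following: $u$ is an epimorphism as soon as $\pi_0(u)$ is surjective and, for every point $a$ of $A$, the map $\pi_1(A,a)\to\pi_1(B,u(a))$ is surjective.

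First I reduce to $\pi$-finite $1$-truncated targets $C$. Both mapping anima turn every cofiltered limit in the target variable into a limit, and monomorphisms are stable under limits. Since every object of $\truncpfAni[1]$ is a cofiltered limit of $1$-truncated $\pi$-finite anima, this reduction is harmless.

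Next, for $C$ $\pi$-finite, maps out of $B$ are computed componentwise. Over a point $b\in\pi_0(B)$ with group $G=\pi_1(B,b)$, the homotopy classes of maps from that component to the component of $C$ at $c$ (with group $K$) are continuous homomorphisms $G\to K$ up to $K$-conjugacy. The automorphism group of such a map $\rho$ in the mapping anima is the centralizer $Z_K(\rho(G))$.

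Now suppose $H\to G$ is surjective at a point $a$ over $b$. Then precomposition is injective on conjugacy classes of homomorphisms, and the centralizers $Z_K(\rho(G))$ and $Z_K(\rho(H))$ coincide because the images agree. Surjectivity on $\pi_0$ ensures that every component of $B$ is hit, so that the data over $B$ is detected by restriction to $A$. Componentwise bookkeeping for a profinite set of components is handled by writing $\pi_0$ as a limit of finite sets, so that $C$ only sees finitely many components at a time. If the appendix already contains such an epimorphism criterion, I would simply cite it.

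Applying the criterion to $u\colon \ethtpytype(T_{\!\eta})\to\ethtpytype_{\le 1}(T)$ is then immediate:
\begin{itemize}
\item the equality $\ethtpytype(T_{\!\eta})=\ethtpytype_{\le 1}(T_{\!\eta})$ is the $1$-truncatedness in \cref{eta-one-truncated};
\item $\pi_0$ is bijective by \cref{eta-one-truncated}, since $\pi_0$ of $\ethtpytype_{\le 1}(T)$ equals $\ethtpygrp[0][T]$;
\item $\pi_1$ at each geometric point $t$ of $T_{\!\eta}$ is $\etfdtlgrp(T_{\!\eta},t)\to\etfdtlgrp(T,t)$, which is surjective by \cref{eta-one-truncated}. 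Points of $\ethtpytype(T_{\!\eta})$ all come from geometric points up to homotopy, because they are detected on $\pi_0$.
\end{itemize}

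The main obstacle I expect is making the mapping-space computation rigorous in the profinite setting with infinitely many components. The tensions are that one must pass to limits on the target while the source has a profinite, rather than finite, $\pi_0$, and must also identify $\pi_1$ of the pro-object with the profinite group. I would first try to use that $1$-truncated profinite anima are equivalent to profinite groupoids (\cref{cor:n-truncation-profinite-anima} and the appendix), and do the argument entirely in that $1$-categorical model. There, epimorphism amounts to essential surjectivity plus fullness on automorphism groups, which can be checked directly with finite target groupoids.
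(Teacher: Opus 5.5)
There is a genuine gap in two places: the criterion you propose is false as stated, and the profinite step it rests on is not carried out.

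\textbf{The criterion needs bijectivity on $\pi_0$.} Surjectivity on $\pi_0$ is not enough. Take $u\colon \ast\sqcup\ast\to\ast$ in $\Ani_{\pi,\le 1}$ and $C=\B\!K$ with $K\neq 1$ finite. Then $u^{*}$ is the diagonal $\B\!K\to\B\!K\times\B\!K$, which on $\pi_1$ is the diagonal $K\to K\times K$, not an isomorphism. So $u^*$ is not a monomorphism and $u$ is not an epimorphism.

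Your verification misses this case. Suppose two components of $A$, with groups $H_1,H_2$, lie over the same component of $B$ with group $G$. Then the induced map on automorphism groups in the mapping anima is the diagonal $Z_K(\rho(G))\to Z_K(\rho(H_1))\times Z_K(\rho(H_2))$. Equality of centralizers gives an isomorphism only when each component of $B$ receives exactly one component of $A$.

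Your groupoid reformulation has the same flaw. Epimorphisms of groupoids are the functors that are essentially surjective and full on \emph{all} hom-sets, not merely on automorphism groups. The correct hypothesis is that $\pi_0(u)$ is bijective. Since \cref{eta-one-truncated} does give bijectivity on $\hat\pi_0$, this part is repairable.

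\textbf{The profinite step is missing.} You flag this obstacle but do not resolve it, and it is the real content of the lemma. For ind-étale $B$ (e.g.\ the $w$-contractible $S_k$ in the proof of \cref{thm:spreading-out-homotopies}), $T$ typically has infinitely many components. Your componentwise description of $\map(B,C)$ is valid only when $\pi_0$ is finite. For profinite $\pi_0$, the mapping anima into a $\pi$-finite $C$ is a filtered colimit over finite stages, not a product over the points of $\pi_0$. Moreover, the hypotheses ($\pi_0$ bijective, $\pi_1$ surjective) hold only for the limit map, not for the levelwise maps of an arbitrary presentation $u=\lim_i u_i$. So ``writing $\pi_0$ as a limit of finite sets'' does not by itself reduce you to the finite case.

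The paper avoids an abstract profinite criterion by choosing the presentation geometrically:
\begin{enumerate}
\item Write $B=\colimit_\lambda B^\lambda$ with each $B^\lambda$ étale over $A$. By \cref{rec:continuity-of-the-etale-homotopy-type}, and since $\tau_{\le 1}$ commutes with cofiltered limits, the map is the limit of the maps $\ethtpytype_{\le1}(T^\lambda_\eta)\to\ethtpytype_{\le1}(T^\lambda)$.
\item After reducing to $\pi$-finite targets $\Gamma$ (your first step), cocompactness of $\Gamma$ turns precomposition into a filtered colimit of the levelwise precomposition maps. Monomorphisms of anima are stable under filtered colimits (\cref{rem:monomorphisms}).
\item At the étale level, $T^\lambda$ is a finite disjoint union of spectra of normal domains. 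The map $T^\lambda_\eta\to T^\lambda$ is bijective on components and, by normality, surjective on $\etfdtlgrp$.
\item The connected case is then \cref{cor:surjection-of-groups-induces-epimorphism-of-groupoids}, which is exactly your centralizer computation.
\end{enumerate}
Applying your finite argument after this reduction, rather than to the limit map directly, closes the gap.
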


\begin{proof}
  We write $B=\colimit_\lambda B^\lambda$ as a filtered colimit of étale $A$-algebras $B^\lambda$ and set $T^\lambda=\Spec(B^\lambda)$.
  Using \Cref{rec:proetale-hyperdescent}, we obtain a limit decomposition
\[
  \ethtpytype_{\le 1}(T_{\!\eta}) = \limit_{\lambda \in \Lambda} \ethtpytype_{\le 1}(T_{\!\eta}^\lambda) \longrightarrow \limit_{\lambda \in \Lambda} \ethtpytype_{\le 1}(T^\lambda) = \ethtpytype_{\le 1}(T).
\]
Monomorphisms are stable under limits by \Cref{rem:monomorphisms}~\ref{remitem:monos-are-stable-under-limits}.
Thus, in order to verify that $\ethtpytype_{\le 1}(T_{\!\eta}) \to \ethtpytype_{\le 1}(T)$ is an epimorphism in $\Pro(\Ani_{\pi, \le 1})$, it suffices to test this against $\Gamma \in \Ani_{\pi, \leq 1}$.
By cocompactness of $\Gamma \in \Ani_{\pi, \leq 1}$, precomposition with $\ethtpytype_{\leq 1}(T_{\!\eta}) \to \ethtpytype_{\leq 1}(T)$ can be identified with
\[
	\colimit_{\lambda \in \Lambda^{\op}} \map\big(\ethtpytype_{\leq 1}(T^\lambda), \Gamma\big)
	\longrightarrow 
	\colimit_{\lambda \in \Lambda^{\op}} \map\big(\ethtpytype_{\leq 1}(T_{\!\eta}^\lambda), \Gamma\big).
\]
Since monomorphisms of anima are stable under filtered colimits by \Cref{rem:monomorphisms}~\ref{remitem:filtered-colimits-in-anima}, it thus suffices to show that for each $\lambda \in \Lambda$ the map $\ethtpytype_{\leq 1}(T_{\!\eta}^\lambda) \to \ethtpytype_{\leq 1}(T^\lambda)$ is an epimorphism in $\Pro(\Ani_{\pi, \leq 1})$.

At this point we have reduced the assertion to the special case where $A \to B$ is \'etale.
We may assume this now and drop the index $\lambda$. Note that now $T$ is the disjoint union of its connected components and $T_{\!\eta} \to T$ is bijective on connected components.
Using that monomorphisms are stable under products, again by \Cref{rem:monomorphisms}~\ref{remitem:monos-are-stable-under-limits}, we may work component by component.
Hence we furthermore assume that $T$ is connected.
We choose a geometric point $t \in T_{\!\eta}$ and denote its image in $T$ again by $t$.
Then, by the normality of $T$, the induced homomorphism on fundamental groups
\[
  \etfdtlgrp(T_{\!\eta}, t) \lsurj \etfdtlgrp(T, t)
\]
is surjective.
Since $\ethtpytype_{\leq 1}(T_{\!\eta}) \to \ethtpytype_{\leq 1}(T)$ can be identified with
$\B\!\etfdtlgrp(T_{\!\eta}, t) \to \B\!\etfdtlgrp(T, t)$, 
we  conclude by \Cref{cor:surjection-of-groups-induces-epimorphism-of-groupoids}.
\end{proof}

\begin{proof}[Proof of \Cref{thm:spreading-out-homotopies}] 
The proof begins with a few initial reduction steps. 
\begin{itemize}
	\item
	Step 1:  we may assume that $Y = S$.
\end{itemize}
Indeed, by assumption on  $X \to S$, the base change along $Y \to S$ and precomposition with the diagonal map $\ethtpytype(\Delta_{Y/S}) \colon \ethtpytype(Y) \to \ethtpytype(Y\times_S Y)$ induce an equivalence
\[
	\map_{\ethtpytype(S)}\big(\ethtpytype(Y), \ethtpytype(X)\big) 
	\longrightarrow 
	\map_{\ethtpytype(Y)}\big(\ethtpytype(Y), \ethtpytype(X \times_S Y)\big), 
	\quad \ph \mapsto \ph_Y \circ \ethtpytype(\Delta_{Y/S}).
\]
This yields a commutative diagram     
\[
        \begin{tikzcd}
          \map_{\ethtpytype(S)}\big(\ethtpytype(Y), \ethtpytype(X)\big) \arrow[r, "{(\blank)_{S'}}"] \arrow[d, "{\simeq}"'] 
          & \map_{\ethtpytype(S')}\big(\ethtpytype(Y'), \ethtpytype(X')\big) \arrow[d, "{\simeq}"] 
          \\
          \map_{\ethtpytype(Y)}\big(\ethtpytype(Y), \ethtpytype(X \times_S Y)\big) \arrow[r, "{(\blank)_{Y'}}"'] 
          & \map_{\ethtpytype(Y')}\big(\ethtpytype(Y'), \ethtpytype(X' \times_{S'} Y')\big),
        \end{tikzcd}
\]
where the vertical maps are the above equivalences.
With $X \to S$ also $X \times_S Y \to Y$ has the property that the base change to any normal scheme is a quasifibration. 
Moreover, since $Y \to S$ is flat, also $Y' \to Y$ is birational. We may therefore from now on assume that $Y$ equals $S$. 

\begin{itemize}
	\item
	Step 2:  we may further assume that $S' \to S$ is the inclusion $\eta \inj S$ of the disjoint union $\eta$ of generic points of $S$. 
\end{itemize}
Indeed, since $S' \to S$ is birational the morphism $\eta \inj S$ factors uniquely as $\eta \inj S'$ also describing the disjoint union of the generic points of $S'$. 
Assuming that the theorem holds for the inclusion of generic points, then in the commutative diagram
\[
	\begin{tikzcd}
         \map_{\ethtpytype(S)}\big(\ethtpytype(S), \ethtpytype(X)\big)
         \arrow[rr, "{(\blank)_{S'}}"]
         \arrow[dr,"{(\blank)_{\eta}}",swap]
         &&
         \map_{\ethtpytype(S')}\big(\ethtpytype(S'), \ethtpytype(X')\big)
         \arrow[dl,"{(\blank)_{\eta}}"]
	\\
        &
        \map_{\ethtpytype(\eta)}\big(\ethtpytype(\eta), \ethtpytype(X_{\eta})\big) .
        &
        \end{tikzcd}
\]
both maps $(\blank)_\eta$ are monomorphisms. Hence also $(\blank)_{S'}$ is a monomorphism by \cref{lem:cancellation-of-monos}. 

\begin{itemize}
	\item
	Step 3:  we may further assume that $S$ is affine.
\end{itemize}
Indeed, we can choose a Zariski-hypercovering $S_{\bullet} \to S$ such that each $S_k$ is the union of finitely many connected open affine subschemes of $S$, in particular $S_k$ is affine itself.  
We write $\eta_k = S_k \times_S \eta$ for the generic fibre of $S_k \to S$, i.e., the scheme of generic points of the normal affine $S_k$. Since $\ethtpytype(S) = \colimit_{k \in \Delta^{\op}} \ethtpytype(S_k)$, and similarly for $\ethtpytype(\eta)$, the vertical maps in the commutative diagram
\[
	\begin{tikzcd}
    	\map_{\ethtpytype(S)}\big(\ethtpytype(S), \ethtpytype(X)\big) \arrow[d,"\vsim"] \arrow[rrr] 
	&&& \map_{\ethtpytype(\eta)}\big(\ethtpytype(\eta), \ethtpytype(X_{\eta})\big) \arrow[d,"\vsim"] 
	\\
    	{\displaystyle \limit_{k \in \Delta}} \map_{\ethtpytype(S)}\big(\ethtpytype(S_k), \ethtpytype(X)\big) 
	\arrow[rrr, "{\lim_k(\blank \times_{\ethtpytype(S)} \ethtpytype(\eta))}"] 
	&&& {\displaystyle \lim_{k \in \Delta}} \map_{\ethtpytype(\eta)}\big(\ethtpytype(\eta_k), \ethtpytype(X_{\eta})\big)
  	\end{tikzcd}
\]
are equivalences. Therefore, since monomorphisms are stable under limits, it suffices to verify that 
\[
  \begin{tikzcd}
     \map_{\ethtpytype(S)}\big(\ethtpytype(S_k), \ethtpytype(X)\big) \arrow[rr, "{\blank \times_{\ethtpytype(S)} \ethtpytype(\eta)}"] 
     && \map_{\ethtpytype(\eta)}\big(\ethtpytype(\eta_k), \ethtpytype(X_{\eta})\big)
  \end{tikzcd}
\]
is a monomorphism for each $k \in \Delta$.
Repeating step 1, this reduces us to $S = S_k$ and $S' = \eta_k$, which is an instance of the assumptions that we reduce to in step 2.
This completes this reduction step.

\begin{itemize}
	\item
	Step 4:  we may further assume that $S$ is connected.
\end{itemize}
Indeed, since $S$ is noetherian and normal, it decomposes into the finite disjoint union of its connected components and the assertion of \Cref{thm:spreading-out-homotopies} can be shown componentwise.

\begin{itemize}
	\item
	Step 5:  finally we simplicially resolve $S$.
\end{itemize}
Using \cite[Lemma 2.4.9]{BS15}, we find a hypercovering $S_{\bullet} \to S$ such that each $S_k$ is affine, w-contractible and pro-étale over $S$.  
We consider the pullbacks $X_{\bullet} = X \times_{S} S_{\bullet} \to X$ and $S_{\bullet, \eta} = \eta \times_S S_{\bullet} \to \eta$.
Using that any base change of $f \from X \to S$ to a normal base scheme $T \to S$ is a quasifibration, we obtain the commutative diagram
\[
\begin{tikzcd}
\map_{\ethtpytype(S)}\big(\ethtpytype(S), \ethtpytype(X)\big)
\arrow[rrr, "{\blank \times_{\ethtpytype(S)} \ethtpytype(\eta)}"]
\arrow[dd, "{\blank \times_{\ethtpytype(S)} \ethtpytype(S_{\bullet})}"']
&&& \map_{\ethtpytype(\eta)}\big(\ethtpytype(\eta), \ethtpytype(X_{\eta})\big)
    \arrow[dd, "{\blank \times_{\ethtpytype(\eta)} \ethtpytype(S_{\bullet, \eta})}"] \\
&&& \\
\map_{\ethtpytype(S_{\bullet})}\big(\ethtpytype(S_{\bullet}), \ethtpytype(X_{\bullet})\big) \arrow[rrr, "{\blank \times_{\ethtpytype(S_{\bullet})} \ethtpytype(S_{\bullet, \eta})}"']
&&& \map_{\ethtpytype(S_{\bullet, \eta})}\big(\ethtpytype(S_{\bullet, \eta}), \ethtpytype(X_{\bullet} \times_{S_{\bullet}} S_{\bullet, \eta})\big).
\end{tikzcd}
\]
Note that $S_{\bullet} \to S$, $X_{\bullet} \to X$, and $S_{\bullet, \eta} \to \eta$ induce simplicial resolutions on étale homotopy types by \Cref{rec:proetale-hyperdescent}.
Therefore, both vertical maps are equivalences by \Cref{prop:geometric-realisation-of-pullbacks} \labelcref{propitem:fully-faithful}. 
Hence, if $\blank \times_{\ethtpytype(S_{\bullet})} \ethtpytype(S_{\bullet, \eta})$ is a monomorphism, then so is $\blank \times_{\ethtpytype(S)} \ethtpytype(\eta)$. 

It thus suffices to show that the horizontal map in
\[
\begin{tikzcd}
\map_{\ethtpytype(S_{\bullet})}\big(\ethtpytype(S_{\bullet}), \ethtpytype(X_{\bullet})\big)
\arrow[rrr, "{\blank \times_{\ethtpytype(S_{\bullet})} \ethtpytype(S_{\bullet, \eta})}"]
\arrow[drrr]
&&& \map_{\ethtpytype(S_{\bullet, \eta})}\big(\ethtpytype(S_{\bullet, \eta}), \ethtpytype(X_{\bullet} \times_{S_{\bullet}} S_{\bullet, \eta})\big) \arrow[d,"{\wr}"] \\
&&&
\map_{\ethtpytype(S_{\bullet})}\big(\ethtpytype(S_{\bullet, \eta}), \ethtpytype(X_{\bullet})\big)
\end{tikzcd}
\]
is a monomorphism.
Since the vertical map is an equivalence by adjunction, it suffices to consider the diagonal map, which is given by precomposition with $\ethtpytype(S_{\bullet, \eta}) \to \ethtpytype(S_{\bullet})$.
         
By \Cref{lem:w-contractibly-is-0-connective},  and since by assumption, any base change of $f \from X \to S$ to a normal base scheme $T\to S$ is a quasifibration with profinite étale 1-truncated geometric fibres, $\ethtpytype(X_{\bullet})$ lies in the full subcategory $\overcat{\simpl{\truncpfAni[1]}}{\ethtpytype(S_{\bullet})}$ of $\overcat{\simpl{\pfAni}}{\ethtpytype(S_{\bullet})}$.
Here $\simpl{\pfAni}$ denotes the $\infty$-category of simplicial objects in $\pfAni$, see \cref{subsec:simplicial-objects-and-resolutions}.

By \Cref{eta-one-truncated}, $\ethtpytype(S_{\bullet, \eta})$ is $1$-truncated and $\ethtpytype(S_{\bullet})$ is even $0$-truncated by \Cref{lem:w-contractibly-is-0-connective}. Hence, by \Cref{lem:right-adjoint-faithful},  it suffices to show that $\ethtpytype(S_{\bullet, \eta}) \to \ethtpytype(S_{\bullet})$ is an epimorphism in $\overcat{\simpl{\truncpfAni[1]}}{\ethtpytype(S_{\bullet})}$.

To this end, by \Cref{lem:epimorphisms-in-over-category}, it suffices to show that $\ethtpytype(S_{\bullet, \eta}) \to \ethtpytype(S_{\bullet})$ is an epimorphism in $\simpl{\truncpfAni[1]}$, which, by the dual of \Cref{rem:monomorphisms}, can be checked levelwise.
We have thus reduced the claim to showing that
\[
    \ethtpytype(S_{k, \eta}) \longrightarrow \ethtpytype(S_{k})
\]
is an epimorphism in $\truncpfAni[1]$ for every $k \in \simplex$. 
This follows from \Cref{lem:epicrit}.
\end{proof}

%%% Local Variables:
%%% mode: LaTeX
%%% TeX-master: "../haupt"
%%% End:

%% file: content/anabelian-criterion-for-extending-curves.tex
In this section we show that diagram \eqref{eq:pre main square} in \cref{sec:introduction} is cartesian.
Using the notation of \eqref{eq:pre main square}, we show more precisely that a $\kappa(\eta)$-map
$\smash{Y_{\!\eta} \to X_\eta}$ in the generic fibre extends to an $S$-map $Y \to X$ if and only if the induced map on étale homotopy types extends from the generic fibre to a  map over~$\ethtpytype(S)$.
In fact, we discuss a slightly more general set-up with the inclusion $\eta \to S$ of the generic point replaced by a dominant and birational morphism $S' \to S$.
But for the application to the proof of \cref{maintheorem} all that matters is the case $S' = \eta$.

\begin{proposition} [Spreading out morphisms]
\label{prop:extension of curves controlled by homotopy}
Let $h\colon  S' \to S$ be a birational morphism of irreducible, normal, excellent schemes, and let $p\colon  X \to S$ be a smooth hyperbolic curve. 

Let $q\colon  Y \to S$ be a dominant morphism of finite type with $Y$ irreducible and normal.
Then, for any $S'$-morphism $f' \from Y' = Y_{S'} \to X_{S'} = X'$, the following are equivalent.
\begin{enumerate}[label=(\alph*), ref=(\alph*)]
  	\item
	\label{propitem:spaces}
  	There is an $S$-morphism $f \colon Y \to X$ whose base change to $S'$ is $f'$
  	 \[
           \begin{tikzcd}
              Y' \cartesian \arrow[r,"h_Y"] \arrow[d, "f' "']
                & Y \arrow[d, "f",dotted] \\
              X' \arrow[r,"h_X"']
                & X.
            \end{tikzcd}
            \]

  	\item
	\label{propitem:homotopy types}
  	There is a $\ethtpytype(S)$-map $\psi \colon \ethtpytype(Y) \to \ethtpytype(X)$ which fits into a commutative diagram
  	 \[
           \begin{tikzcd}
              \ethtpytype(Y') \arrow[r, "{\ethtpytype(h_Y)}"] \arrow[d, "\ethtpytype(f')"']
                & \ethtpytype(Y) \arrow[d, "\psi",dotted] \\
              \ethtpytype(X') \arrow[r, "{\ethtpytype(h_X)}"']
                & \ethtpytype(X) .
            \end{tikzcd}
            \]
            \item
	\label{propitem:1-truncated homotopy types}
  	There is a $\ethtpytype_{\leq 1}(S)$-map $\ph \colon \ethtpytype_{\leq 1}(Y) \to \ethtpytype_{\leq 1}(X)$ which fits into a commutative diagram
  	 \[
           \begin{tikzcd}
              \ethtpytype_{\leq 1}(Y') \arrow[r, "{\ethtpytype_{\leq 1}(h_Y)}"] \arrow[d, "{\ethtpytype_{\leq 1}(f')}"']
                & \ethtpytype_{\leq 1}(Y) \arrow[d, "\ph",dotted] \\
              \ethtpytype_{\leq 1}(X') \arrow[r, "{\ethtpytype_{\leq 1}(h_X)}"']
                & \ethtpytype_{\leq 1}(X) .
            \end{tikzcd}
            \]
\end{enumerate}
\end{proposition}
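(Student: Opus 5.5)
The plan is to prove the cycle of implications \ref{propitem:spaces} $\Rightarrow$ \ref{propitem:homotopy types} $\Rightarrow$ \ref{propitem:1-truncated homotopy types} $\Rightarrow$ \ref{propitem:spaces}.

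\emph{The first two implications.} For \ref{propitem:spaces} $\Rightarrow$ \ref{propitem:homotopy types}, take $\psi = \ethtpytype(f)$. This is a map over $\ethtpytype(S)$, and the square commutes by functoriality of $\ethtpytype$ applied to the cartesian square of schemes. For \ref{propitem:homotopy types} $\Rightarrow$ \ref{propitem:1-truncated homotopy types}, apply the truncation functor $\trunc_{\leq 1}$, which is functorial on over-categories. This gives $\ph = \trunc_{\leq 1}\psi$ over $\ethtpytype_{\leq 1}(S)$ together with a commutative square.

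\emph{Reducing \ref{propitem:1-truncated homotopy types} $\Rightarrow$ \ref{propitem:spaces} to group theory.} The substance lies here, and I will reduce it to the main result of \cite{stix:monodromy-extension} (generalising Moret-Bailly \cite{Moret-B-pur}). Choose a geometric point $y'$ of $Y'$ lying over the generic point, with images $y$ in $Y$, $x' = f'(y')$ in $X'$, $x$ in $X$, and $s$ in $S$. All schemes involved are connected; $Y$ and $Y'$ are normal and irreducible and $X$ is smooth over normal $S$. Hence the $1$-truncated homotopy types are classifying anima of the étale fundamental groups, by \Cref{prop:criterionKpi1} \ref{propitem:criterionKpi1classifyingmap} combined with the profinite Whitehead theorem. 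A map over $\ethtpytype_{\leq 1}(S)$ then corresponds to a homomorphism $\etfdtlgrp(Y,y) \to \etfdtlgrp(X,x)$ over $\etfdtlgrp(S,s)$, up to conjugation by the kernel $\etfdtlgrp(X_{\bar s})$. Because $Y$ is normal and $Y' \to Y$ is dominant, $\etfdtlgrp(Y', y') \to \etfdtlgrp(Y,y)$ is surjective. The commutativity of the square in \ref{propitem:1-truncated homotopy types} therefore says that $\etfdtlgrp(h_X)\circ\etfdtlgrp(f')$ factors, after conjugating, through $\etfdtlgrp(Y,y)$. Equivalently, the kernel of $\etfdtlgrp(Y',y') \to \etfdtlgrp(Y,y)$ maps trivially to $\etfdtlgrp(X,x)$.

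\emph{Extending $f'$.} Shrinking to generic points, $f'$ yields a rational $S$-map $Y \dashrightarrow X$ defined on the dense open $Y'$ (or its image), since $h$ is birational. The criterion of \cite{stix:monodromy-extension} says the following. A rational map from a normal scheme to a hyperbolic curve over $S$ extends across a codimension-one point exactly when the monodromy (the induced map on fundamental groups) is unramified there, i.e.\ the inertia at that point dies in $\etfdtlgrp(X)$. Normality of $Y$ then extends the map everywhere, because hyperbolic curves admit no rational curves. The group-theoretic factorisation above gives precisely this triviality of inertia, since the inertia groups lie in the kernel of $\etfdtlgrp(Y') \to \etfdtlgrp(Y)$. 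We obtain $f \colon Y \to X$, and $f$ restricts to $f'$ because $X$ is separated and $Y'$ is reduced (cf.\ \Cref{prop:injectivity-via-generic-fibre}).

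\emph{Main obstacle.} I expect the hardest step to be matching hypotheses precisely with those of \cite{stix:monodromy-extension}: excellence, normality, and the passage from the birational $S'$ to codimension-one points of $Y$ not lying over $h(S')$. One must check that the unramifiedness condition in that result is exactly the factorisation through $\etfdtlgrp(Y)$. One must also handle basepoints and outer conjugation consistently, so that an unpointed map of groupoids over $\ethtpytype_{\leq 1}(S)$ yields an honest homomorphism compatible with $\etfdtlgrp(f')$.
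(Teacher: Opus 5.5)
Your overall route is the paper's: turn (c) into a factorisation of the monodromy through $\etfdtlgrp(Y)$ and feed it into \cite{stix:monodromy-extension}. Your observation that the kernel of $\etfdtlgrp(Y')\to\etfdtlgrp(Y)$ must die in $\etfdtlgrp(X)$ is the right group-theoretic input. The gap is in what you do with it. You recast the cited theorem as a codimension-one criterion (inertia at height-one points of $Y$ dies in $\etfdtlgrp(X)$), followed by a purity step: ``normality of $Y$ then extends the map everywhere, because hyperbolic curves admit no rational curves''. That purity step is false for merely normal $Y$. Take a projectively normally embedded smooth projective curve $C$ of genus $\geq 2$ and let $Y$ be its affine cone. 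The projection $Y\smallsetminus\{0\}\to C$ is defined in codimension one and $C$ has no rational curves, yet the map does not extend over the vertex, since the vertex lies in the closure of every fibre. The no-rational-curves argument needs a regular source: one resolves the indeterminacy by blow-ups in smooth centres, whose exceptional fibres are covered by rational curves. For normal $Y$, the kernel of $\etfdtlgrp(U)\to\etfdtlgrp(Y)$ is not generated by codimension-one inertia, and an obstruction like the cone vertex is visible only to the global condition. So you must not weaken your factorisation to a statement about codimension-one inertia. What Stix's Theorem~1.1 consumes is exactly that the map on fundamental groups of a family over a dense open $U\subseteq Y$ extends to $\etfdtlgrp(Y)$.

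A second missing piece: that theorem extends families of hyperbolic curves, not rational maps into a relative curve, and your proposal omits the translation the paper makes. First use the uniqueness of $f$ (as $Y'\to Y$ is dominant) and Galois descent, transporting (c) along a finite étale $T\to S$ via \Cref{lem:finite-etale-galois-cartesian-square}. This lets you split the boundary divisor, so that $X/S$ is classified by some $\xi_X\colon S\to\dM_{g,n}$ and $X=S\times_{\dM_{g,n}}\dM_{g,n+1}$. Next, the generic point $\eta_Y$ of $Y$ lifts to $Y'$, and $h_X\circ f'$ gives a $\kappa(\eta_Y)$-point of $X$. By finite presentation of $X/S$ this spreads to $g\colon U\to X$ on a dense open $U\subseteq Y$; this is not because $Y'$ is an open of $Y$, which fails e.g.\ for $S'=\eta$. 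Extending $g$ to $Y$ then amounts to extending the curve of type $(g,n+1)$ classified by $U\to\dM_{g,n+1}$ to $Y$. The required factorisation of $g_\ast$ through $\ph_\ast$ follows from a chase with the surjections $\etfdtlgrp(\eta_Y)\twoheadrightarrow\etfdtlgrp(U)\twoheadrightarrow\etfdtlgrp(Y)$, which is essentially your kernel argument. A minor point: $Y'$ need not be connected, so work with the component through $\eta_Y$.
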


\begin{proof}
Because $\ethtpytype$ is a functor $\Sch^{\qcqs} \to \pfAni$, the implication \labelcref{propitem:spaces} $\Rightarrow$ \labelcref{propitem:homotopy types} is immediate, and \labelcref{propitem:homotopy types} $\Rightarrow$ \labelcref{propitem:1-truncated homotopy types} is trivial.
So it remains to show \labelcref{propitem:1-truncated homotopy types} $\Rightarrow$ \labelcref{propitem:spaces}.
Because $Y' \to Y$ is also dominant, if $f$ as in \labelcref{propitem:spaces} exists, it is unique and automatically an $S$-morphism.

We claim that the statement is local with respect to finite étale covers $T \to S$ of the base.
Indeed, by (Galois) descent  and the stated uniqueness, it suffices to construct a map $f_{\!T} \colon  Y \times_S T \to X \times_S T$ which is compatible with the base change $f'_{\!T} \colon  Y' \times_S T \to X' \times_S T$.
But since assumption \labelcref{propitem:1-truncated homotopy types} together with \Cref{lem:finite-etale-galois-cartesian-square} also induces  a diagram
\[
     \begin{tikzcd}
              \ethtpytype_{\le 1}(Y' \times_S T) \arrow[r] \arrow[d, "{\ethtpytype_{\leq 1}(f'_{\!T})}"']
                & \ethtpytype_{\le 1}(Y \times_S T) \arrow[d, "{\ph_{T}}"] \\
              \ethtpytype_{\le 1}(X' \times_S T) \arrow[r]
                & \ethtpytype_{\le 1}(X \times_S T) ,
     \end{tikzcd}
\]
the existence of the claimed morphism $f_{\!T}$ follows, once we decompose $Y \times_S T$ and $X \times_S T$ into connected components, so that the assertion can be applied on the $T$-level.

We exploit the freedom to pass to a finite étale cover of $S$ to split the boundary divisor of $X/S$. 
The relative hyperbolic curve $X \to S$ admits by definition a
compactification $X \subseteq \overline{X}$ relative $S$ with a finite étale
boundary divisor $D = \overline{X} \setminus X$ of relative degree $n$.
By choosing a suitable connected finite étale cover $T \to S$ that
splits all connected components of  $D \to S$, we may assume, after base
change by $T \to S$ and resetting notation,  that $X$ is the complement
of the marked points of a relative hyperbolic curve of genus $g$ with
$n$ marked points over $S$.
Thus, $X/S$ is represented by a map $\xi_X \colon  S \to \dM_{g,n}$ to the moduli stack of smooth hyperbolic curves of type $(g,n)$.

Let $\eta$ be the generic point of $Y$. By assumption, the image $q(\eta)$ is the generic point of $S$ so that $\eta \to Y \to S$ lifts uniquely to a map $\eta \to S'$. The resulting $\eta \to Y'$ describes a generic point of an irreducible component of $Y'$ that birationally dominates $Y$. 
Since $X/S$ is of finite presentation, there is a dense open $U \subseteq Y$ such that
the restriction $h_{X} \circ f' \colon  Y' \to X' \to X$ to $\eta$ extends to a map $g\colon 
U \to X$ over $S$.
We obtain a commutative diagram
\[
     \begin{tikzcd}
          \eta \arrow[d, hook]  \arrow[r, hook]
          & Y' \arrow[dd] \arrow[r, "f'"]
          & X'  \arrow[d, "{h_{X}}"]
          &
          \\
          U \arrow[dr, open]  \arrow[rr, "g", near end, crossing over]
          && X \arrow[r]  \arrow[d]
          & \dM_{g,n+1} \arrow[d]
          \\
          & Y \arrow[r]
          & S \arrow[r, "{\xi_{X}}"]
          & \dM_{g,n}
    \end{tikzcd}	
\]
The bottom right square is cartesian. Therefore, in order to construct the map $f\colon  Y  \to X$, it suffices to show that the curve over $U$ corresponding to the composite map  $U \to X \to \dM_{g,n+1}$ extends to a curve of the same type on $Y$.
By \cite[Theorem~1.1]{stix:monodromy-extension},  this follows if we can show that we have an extension as maps of étale fundamental groups.
Applying the functor $\etfdtlgrp$ with appropriate base points, yields the commutative diagram
\[
     \begin{tikzcd}
          \etfdtlgrp(\eta) \arrow[d, twoheadrightarrow] \arrow[r]
          & \etfdtlgrp(Y') \arrow[dd] \arrow[r, "f'_{\!\ast}"]
          & \etfdtlgrp(X')  \arrow[d]
          &
          \\
          \etfdtlgrp(U) \arrow[dr,  twoheadrightarrow]  \arrow[rr, "g_\ast", near end, crossing over]
          && \etfdtlgrp(X) \arrow[r]  \arrow[d]
          & \etfdtlgrp(\dM_{g,n+1}) \arrow[d]
          \\
          & \etfdtlgrp(Y) \arrow[r] \arrow[ur,"\ph_\ast"]
          & \etfdtlgrp(S) \arrow[r]
          & \etfdtlgrp(\dM_{g,n}).
    \end{tikzcd}	
\]

The map $\ph_\ast$ appearing in the diagram by assumption \labelcref{propitem:1-truncated homotopy types} is compatible with $f'_{\!\ast}$, and then a diagram chase using the indicated surjections shows the required compatibility with $g_\ast$.
\end{proof}

%%% Local Variables:
%%% mode: LaTeX
%%% TeX-master: "../haupt"
%%% End:

%% file: content/finalize-proof.tex
The ultimate goal of this section is to show \Cref{maintheorem-sharp}, a refined version of \cref{maintheorem}.
We start with some lemmas that lead to a comparison of the notions  $\pi_1$-open versus $\etapioneop$ in \Cref{thm:eta-open}.

\begin{lemma}
\label{lem:Katz-Lang plus epsilon}
Let $S$ be a normal connected qcqs scheme with geometric generic point $\bar \eta \to S$.
Let $f \colon Y \to S$ be of finite type with $Y$ normal and connected, with geometrically connected generic fibre $Y_{\!\bar \eta}$ and assume that the smooth locus of $f$ surjects onto $S$.
Let $y$ be a geometric point of $Y_{\!\bar \eta}$.
Then the sequence 
\[
 \etfdtlgrp(Y_{\!\bar \eta}, y) \lang  \etfdtlgrp(Y, y) \longrightarrow  \etfdtlgrp(S,\bar \eta) \lang 1
\]
of homotopy groups in low degrees is exact.
\end{lemma}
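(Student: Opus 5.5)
We prove the two exactness assertions separately: surjectivity of $\etfdtlgrp(Y, y) \to \etfdtlgrp(S,\bar\eta)$, and exactness in the middle. Both translate, via Galois theory, into statements about finite étale covers. Concretely, surjectivity means: for every connected finite étale cover $T \to S$, the pullback $Y_T = Y \times_S T$ is connected. Exactness in the middle means: if a connected finite étale cover $Z \to Y$ becomes completely split over $Y_{\bar\eta}$, then $Z \cong Y_T$ for some finite étale $T \to S$.

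For surjectivity, let $T \to S$ be connected finite étale. Since $S$ is normal and connected, $T$ is normal and irreducible with generic point $\eta_T$ lying over $\eta$. The generic fibre of $Y_T \to T$ is $Y_\eta \times_\eta \eta_T$, which is irreducible because $Y_{\bar\eta}$ is connected and, by normality of $Y$, irreducible. It remains to show that every connected component of $Y_T$ meets the generic fibre. Here the hypothesis that the smooth locus of $f$ surjects onto $S$ enters. The projection $Y_T \to T$ is flat on the smooth locus, hence open there. Every connected component of $Y_T$ is open and closed and, since $Y_T \to Y$ is finite étale and surjective, dominates a component of $Y$. I will use normality of $Y_T$ and the fact that the connected component maps onto a component of $Y$, which meets the smooth locus of $f$. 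On that smooth locus, openness of $Y_T \to T$ shows that the component meets the preimage of $\eta_T$. Hence $Y_T$ is connected, which gives surjectivity.

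For exactness in the middle, I use the standard criterion (SGA1 IX 6.1 style). Let $Z \to Y$ be a connected Galois cover whose restriction to $Y_{\bar\eta}$ has a section over the component of $y$. Then $Z_{\bar\eta}$ splits, so the Stein factorisation of $Z \to S$ gives the required $T$. Concretely, I will first work over the generic point. The scheme $Z_\eta$ is normal, and its geometric components over $\bar\eta$ are copies of $Y_{\bar\eta}$. So the algebraic closure $L$ of $\kappa(\eta)$ in the function field of $Z$ is finite separable, and $Z_\eta \to Y_\eta \times_\eta \Spec L$ is an isomorphism. This uses that the geometric generic fibre of $Y$ is connected together with degree counting. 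Let $T$ be the normalisation of $S$ in $L$.

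The main obstacle is showing that $T \to S$ is étale, since a priori $L/\kappa(\eta)$ could ramify along divisors of $S$. Here the surjectivity of the smooth locus is used again. Take a point $s$ in $S$ of codimension one and a point $y'$ in $Y$ over it where $f$ is smooth. Around $y'$, the map $Z \to Y$ is étale and $Y \to S$ is smooth, so $Z \to S$ is smooth there. The normalisation of the normal scheme $Z$ in its generic fibre is $Z$ itself. I would then compare ramification of $L$ at $s$ with the unramified local ring of $Z$ at a point over $y'$: the valuation of $s$ extends to $Z$ with ramification index $1$, since $Z \to S$ is smooth, so $L$ is unramified at $s$. By Zariski–Nagata purity on normal $S$ (or by restricting to the regular locus and passing to finite étale covers via $\etfdtlgrp(\eta)\surj\etfdtlgrp(S)$ arguments), $T \to S$ is finite étale; if purity is unavailable in this generality, I would instead reduce to the noetherian case by approximation. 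Finally, $Z$ and $Y_T$ are both normal, finite over $Y$, and have the same function field, so $Z \cong Y_T$. This proves exactness.
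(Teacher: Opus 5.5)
Your surjectivity argument is fine. So is the set-up for exactness in the middle: the reduction to a connected Galois cover $Z\to Y$ that splits over $Y_{\bar\eta}$, the identification $Z_\eta\cong Y_\eta\times_\eta\Spec L$, the choice of $T$ as the normalisation of $S$ in $L$, and $Z\cong Y_T$ once $T\to S$ is étale.

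The gap is exactly at the step you flag as the main obstacle. You only show that $L$ is unramified over the codimension-one points of $S$ and then invoke Zariski--Nagata purity. Purity of the branch locus requires $S$ to be \emph{regular}, and for normal $S$ it is false. Noetherian approximation does not help, since the issue is regularity, not finiteness. Passing to $S^{\mathrm{reg}}$ does not help either: a finite étale cover of $S^{\mathrm{reg}}$ need not extend over $S$, and surjectivity of $\etfdtlgrp(\eta)\to\etfdtlgrp(S)$ says nothing about this.

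In fact no argument that uses smooth points of $f$ only over codimension-one points of $S$ can succeed. Work over an algebraically closed field of characteristic $0$. Let $\{\pm1\}$ act on $\bA^2$ by $-1$ and on $\mathbb{G}_m$ by $w\mapsto -w$, and put $S=\bA^2/\{\pm1\}$, $Z=\bA^2\times\mathbb{G}_m$, $Y=Z/\{\pm1\}$. The following hold:
\begin{itemize}
\item the action on $Z$ is free, so $Y$ is smooth and connected, and $Z\to Y$ is a connected Galois cover of degree $2$;
\item $Y_{\bar\eta}\cong\mathbb{G}_{m,\bar\eta}$ is connected and $Z_{\bar\eta}\to Y_{\bar\eta}$ is split;
\item $Y\to S$ is smooth exactly away from the fibre over the vertex, in particular over all codimension-one points.
\end{itemize}
Here $T=\bA^2\to S$ is unramified in codimension one but not étale, and $\etfdtlgrp(S)=1$. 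The image of $\etfdtlgrp(Y_{\bar\eta})$ lies in the index-$2$ subgroup $\etfdtlgrp(Z)$, so the sequence is not exact in the middle. Every step of your argument except the appeal to purity goes through in this example.

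The missing idea is to use the hypothesis that the smooth locus of $f$ surjects onto $S$ at \emph{every} point $s\in S$, not only in codimension one. Set $K=\kappa(\eta)$.
\begin{itemize}
\item Since $Z$ is normal and $L\subseteq K(Z)$, the map $Z\to S$ factors through $T$.
\item Choose $z\in Z$ over a point of the smooth locus of $f$ above $s$. Then $Z\to S$ is smooth at $z$, so it acquires a section over $S^{\mathrm{sh}}=\Spec\mathcal{O}^{\mathrm{sh}}_{S,s}$.
\item Composing this section with $Z\to T$ gives a section of the normal, finite $S^{\mathrm{sh}}$-scheme $T\times_S S^{\mathrm{sh}}$. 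Hence one factor of $L\otimes_K\mathrm{Frac}(\mathcal{O}^{\mathrm{sh}}_{S,s})$ equals $\mathrm{Frac}(\mathcal{O}^{\mathrm{sh}}_{S,s})$.
\item Since $L/K$ is Galois (because $Z\to Y$ is Galois and $Y_\eta$ is geometrically connected), all factors are trivial, i.e.\ $T$ is étale over $s$.
\end{itemize}

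This is essentially the content of Lemma~2 of Katz--Lang, which the paper cites rather than reproving. The paper passes to the smooth locus $Y^0$. This still surjects onto $S$, and it has connected geometric generic fibre because $Y_{\bar\eta}$ is unibranch and connected. It then applies Katz--Lang to the smooth morphism $Y^0\to S$. Finally it transfers exactness to $Y$ by a diagram chase, using the surjection $\etfdtlgrp(Y^0)\surj\etfdtlgrp(Y)$ that comes from normality of $Y$.
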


\begin{proof} Using noetherian approximation, we may assume that all occurring schemes are noetherian.
If $Y \to S$ is smooth, then the assertion of \Cref{lem:Katz-Lang plus epsilon} is proven in  \cite[Lemma~2]{KL}. 

Let $Y^0$ be the open in $Y$ where $f$ is smooth.
Since $Y_{\!\bar \eta}$ is unibranch and connected, the geometric fibre $Y^0_{\!\bar \eta}$ is a connected open in $Y_{\!\bar \eta}$, i.e., the restriction $Y^0 \to S$ has a geometrically connected generic fibre.
We may assume that $y$ is contained in $Y^0_{\!\bar \eta}$ and obtain a commutative diagram
\[
\begin{tikzcd}
   \etfdtlgrp(Y^0_{\!\bar \eta}, y)  \rar  \arrow[d, two heads]  
   & \etfdtlgrp(Y^0,y) \rar  \arrow[d, two heads]  
   & \etfdtlgrp(S,\bar \eta) \rar \arrow[d,equal]
   & 1 \\
   \etfdtlgrp(Y_{\!\bar \eta}, y)  \rar 
   & \etfdtlgrp(Y, y) \rar 
   & \etfdtlgrp(S,\bar \eta)  \rar& 1,
\end{tikzcd}
\]
in which the top row is exact by \cite[Lemma~2]{KL}.
Since the middle vertical map is surjective, the bottom row is also exact.
\end{proof}

\begin{lemma}\label{lem:centre-free}
Let $X$ be a smooth, hyperbolic curve over a separably closed field. Then, for any geometric point $x$ of $X$, the profinite group  $\hat\pi_1(X,x)$ is strongly centre-free. 
\end{lemma}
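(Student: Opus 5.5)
The plan is to use Galois theory of finite étale covers to turn a central element into an automorphism of a hyperbolic curve that acts trivially on cohomology, and then rule such an automorphism out with a Lefschetz fixed point count.

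\emph{Reduction.} An open subgroup $U \subseteq \hat\pi_1(X,x)$ is $\hat\pi_1(X_U,x_U)$ for a connected finite étale cover $X_U \to X$. Since the base field is separably closed, $X_U$ is again a smooth hyperbolic curve: the Euler characteristic is multiplicative in étale covers, so $\chi(X_U) = \deg(X_U/X)\,\chi(X) < 0$. So "strongly centre-free" reduces to showing that $\hat\pi_1(X,x)$ has trivial centre for every smooth hyperbolic curve $X$ over a separably closed field $k$.

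\emph{From a central element to an automorphism.} Suppose $1 \neq z$ is central in $\pi = \hat\pi_1(X,x)$.
\begin{itemize}
\item Choose an open normal subgroup $N$ with $z \notin N$. Replacing $z$ by a suitable power and enlarging $N$, arrange that the image of $z$ in $\pi/N$ has prime order $q$.
\item Put $U = \langle N, z\rangle$. The cover $X_N \to X_U$ is Galois with group $\langle \bar z\rangle \cong \bZ/q$, so $\bar z$ is an automorphism $\sigma$ of order $q$ of the hyperbolic curve $X_N$.
\item Because $z$ is central, conjugation by $z$ is trivial on $N = \hat\pi_1(X_N)$. Hence $\sigma$ acts trivially on $N^{\mathrm{ab}}$, and so trivially on $\rH^1(X_N, \mathbb{Z}_\ell)$ for every prime $\ell \neq \mathrm{char}(k)$.
\end{itemize}

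\emph{Lefschetz argument.} Let $\overline{X}_N$ be the smooth compactification, of genus $g$, with $r$ cusps.
\begin{itemize}
\item $\rH^1(X_N)$ is an extension of $\rH^1(\overline{X}_N)$ by the weight-$2$ part. The weight-$2$ part is the kernel of the sum map $\mathbb{Z}_\ell(-1)^{\text{cusps}} \to \mathbb{Z}_\ell(-1)$.
\item Triviality on $\rH^1(X_N)$ therefore gives triviality on $\rH^1(\overline{X}_N)$, and forces $\sigma$ to fix every cusp when $r \geq 2$. When $r \leq 1$ the single cusp, if any, is fixed automatically.
\item If $q \neq \mathrm{char}(k)$, the Lefschetz trace formula for the finite-order (tame) automorphism $\sigma \neq \id$ gives $\#\mathrm{Fix}(\sigma) = 2 - \mathrm{tr}(\sigma \mid \rH^1(\overline{X}_N)) = 2-2g$. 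Each fixed point counts with multiplicity $1$, since a nontrivial tame automorphism has transversal fixed points.
\item The fixed points include all $r$ cusps, so $r \leq 2-2g$, i.e.\ $\chi(X_N) \geq 0$. This contradicts hyperbolicity.
\end{itemize}

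\emph{Main obstacle.} I expect the hardest case to be $q = p = \mathrm{char}(k)$. There the fixed points of $\sigma$ have higher multiplicity and the trace formula becomes $\#\mathrm{Fix} \leq 2-2g$ only after accounting for wild ramification.
\begin{itemize}
\item First attempt: avoid this case by choosing $N$ more cleverly. Since $\hat\pi_1$ surjects onto its maximal prime-to-$p$ quotient, try to place $z$ nontrivially in a prime-to-$p$ quotient of an open subgroup.
\item If $z$ lies in every such kernel, fall back on the wild Lefschetz formula instead. For an automorphism of order $p$ acting trivially on $\rH^1$, it still gives $\sum_{P\in\mathrm{Fix}} i_P = 2-2g$ with each $i_P \geq 2$. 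Hence $2r \leq 2-2g$, which again contradicts $\chi < 0$.
\end{itemize}
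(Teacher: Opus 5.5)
Your argument is correct, and it takes a genuinely different route from the paper, which gives no argument of its own. The paper cites Anderson for characteristic $0$, where $\hat\pi_1(X,x)$ is the profinite completion of a free or surface group, and Tamagawa's (1.11) for arbitrary characteristic. You instead give a self-contained geometric proof that is uniform in the characteristic. A nontrivial central element produces a nontrivial automorphism $\sigma$ of a hyperbolic cover $X_N$ acting trivially on $\rH^1(X_N,\mathbb{Z}_\ell)$, and the Lefschetz formula together with your cusp analysis rules this out.

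A few points should be tidied up.

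\emph{Euler characteristic in characteristic $p$.} The Euler characteristic is not multiplicative in finite \'etale covers, since these may be wildly ramified at the cusps. Riemann--Hurwitz only gives $\chi(X_U)\le \deg(X_U/X)\,\chi(X)$. This inequality is still all you need for hyperbolicity of $X_U$ and $X_N$.

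\emph{Your ``main obstacle'' is not one.} For any $\sigma\neq\id$ the fixed locus is finite, and the Lefschetz formula reads $\sum_{P\in\mathrm{Fix}(\sigma)} i_P=2-2g$ with every $i_P\ge 1$. Hence $r\le\#\mathrm{Fix}(\sigma)\le 2-2g$ whether or not $q=p$, so no prime-to-$p$ manoeuvre is needed.

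\emph{Separably closed base field.} You should first pass to the algebraic closure, which changes neither $\hat\pi_1$ nor the \'etale cohomology, so that intersection theory on $\overline{X}_N\times\overline{X}_N$ applies.

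\emph{A shorter variant.} You could bypass Lefschetz and the cusp analysis altogether. Since $\rH^1(X_U,\mathbb{Q}_\ell)=\rH^1(X_N,\mathbb{Q}_\ell)^{\langle\sigma\rangle}$, a trivial action would force equal first Betti numbers. But $X_U$ and $X_N$ are both affine or both proper, so $b_1(X_N)-b_1(X_U)=\chi(X_U)-\chi(X_N)\ge(q-1)\lvert\chi(X_U)\rvert>0$.
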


\begin{proof}
See \cite[Proposition~8, Proposition~18]{andersonExactnessPropertiesProfinite1974} in the characteristic $0$ case, and  \cite[\nopp (1.11)]{Tama-groth} in arbitrary characteristics. 
\end{proof}

\begin{proposition}
\label{thm:eta-open}
  Let $S$ be a normal, connected qcqs scheme in characteristic $0$ with generic point $\eta$.
  Let $X \to S$ be a hyperbolic curve and let $Y \to S$ be of finite type with $Y$ normal and connected, with geometrically connected generic fibre and assume that the smooth locus of\/ $Y \to S$ surjects onto~$S$.
 
  Let $\varphi: \ethtpytype(Y) \to \ethtpytype(X)$ be a map over $\ethtpytype(S)$ and let $\varphi_\eta: \ethtpytype(Y_\eta) \to  \ethtpytype(X_\eta)$ be the map over $\ethtpytype(\eta)$ (defined up to homotopy) induced in view of \cref{thm:generalisation-of-Friedlander} from $\varphi$ by base change as in \cref{constr:basechange-for-maps-of-etale-homotopy-types}.

Then $\varphi$ is $\pi_1$-open if and only if $\varphi_\eta$ is $\pi_1$-open.
\end{proposition}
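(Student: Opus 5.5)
We reduce to a statement about profinite groups. Everything is connected: $Y$ and $X$ are connected, and $Y_\eta$, $X_\eta$ are connected because the generic fibres are geometrically connected. So $\pi_1$-openness only needs checking at one base point in each case. Choose a geometric point $y$ of $Y_{\bar\eta}$ with images $x$ in $X$ and $\bar\eta$ in $S$.

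By \cref{prop:hyperbolic-curves-are-K-pi-1}, $X_\eta$ is a profinite étale $\K(\pi,1)$. Since $X\to S$ is a quasifibration (\cref{thm:generalisation-of-Friedlander}) with $\K(\pi,1)$ fibres over a normal base, we also have short exact sequences
\[
1\to \etfdtlgrp(X_{\bar\eta})\to \etfdtlgrp(X)\to \etfdtlgrp(S)\to 1
\qquad\text{and}\qquad
1\to \etfdtlgrp(X_{\bar\eta})\to \etfdtlgrp(X_\eta)\to \absGal{\kappa(\eta)}\to 1 .
\]
Exactness on the left comes from the fibre sequence together with $\pi_2(\ethtpytype(S))\to\pi_1(\ethtpytype(X_{\bar\eta}))$ being zero, since $\etfdtlgrp(X_{\bar\eta})$ is centre-free (\cref{lem:centre-free}). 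By \cref{prop:quasifibrations-compatibility-et-htpytype-with-pullbacks}, $\etfdtlgrp(X_\eta)=\etfdtlgrp(X)\times_{\etfdtlgrp(S)}\absGal{\kappa(\eta)}$.

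For $Y$, \cref{lem:Katz-Lang plus epsilon} gives exactness of $\etfdtlgrp(Y_{\bar\eta})\to\etfdtlgrp(Y)\to\etfdtlgrp(S)\to 1$, and similarly over $\eta$. The maps $\etfdtlgrp(Y_\eta)\to\etfdtlgrp(Y)$ and $\absGal{\kappa(\eta)}\to\etfdtlgrp(S)$ are surjective by normality. Both $\varphi_*$ and $(\varphi_\eta)_*$ are compatible with the projections to $\etfdtlgrp(S)$ and $\absGal{\kappa(\eta)}$, and by construction of the base change map, $(\varphi_\eta)_*$ is identified with the map $\etfdtlgrp(Y_\eta)\to \etfdtlgrp(X)\times_{\etfdtlgrp(S)}\absGal{\kappa(\eta)}$ induced by $\varphi_*$.

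The key step is to show that both conditions are equivalent to a single criterion: the restriction of $\varphi_*$ to geometric fundamental groups,
\[
\varphi_{\bar\eta}\colon \operatorname{im}\big(\etfdtlgrp(Y_{\bar\eta})\to\etfdtlgrp(Y)\big)\longrightarrow \etfdtlgrp(X_{\bar\eta}),
\]
has open image. The two sides are handled as follows.

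\textbf{Over $\eta$.} The image of $(\varphi_\eta)_*$ surjects onto $\absGal{\kappa(\eta)}$. A subgroup $H$ of an extension $1\to N\to G\to Q\to 1$ that surjects onto $Q$ is open if and only if $H\cap N$ is open in $N$. Here $H\cap N$ is the image of the geometric part: it contains $\varphi_{\bar\eta}(\etfdtlgrp(Y_{\bar\eta}))$, and it equals it because the kernel of $\etfdtlgrp(Y_\eta)\to\absGal{\kappa(\eta)}$ is the image of $\etfdtlgrp(Y_{\bar\eta})$.

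\textbf{Over $S$.} The image of $\varphi_*$ surjects onto $\etfdtlgrp(S)$, by \cref{lem:Katz-Lang plus epsilon}. Its intersection with $\etfdtlgrp(X_{\bar\eta})$ is $\varphi_*(\ker(\etfdtlgrp(Y)\to\etfdtlgrp(S)))$, which by exactness of the Katz–Lang sequence is again the image of $\etfdtlgrp(Y_{\bar\eta})$.

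Both openness conditions therefore reduce to openness of the same subgroup of $\etfdtlgrp(X_{\bar\eta})$, which proves the equivalence.

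The step I expect to be the main obstacle is making the identifications on $\pi_1$ precise: that $(\varphi_\eta)_*$ really is the fibre-product map, and that the relevant homotopy sequences are exact (in particular, injectivity on the left for $X$). For this I would use \cref{prop:quasifibrations-compatibility-et-htpytype-with-pullbacks} together with \cref{prop:hyperbolic-curves-are-K-pi-1}, and centre-freeness, to identify $\ethtpytype_{\le1}$ of the fibre with the homotopy fibre.
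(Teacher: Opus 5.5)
Your proposal is correct and follows essentially the paper's proof. It uses the same three exact sequences: the SGA1 sequence over $\eta$, the quasifibration sequence for $X\to S$ made left exact by centre-freeness of $\hat\pi_1(X_{\bar\eta})$, and the Katz--Lang sequence for $Y\to S$. It then makes the same index comparison, showing that the images of $\varphi$ and $\varphi_\eta$ both meet $\hat\pi_1(X_{\bar\eta})$ in the image of $\hat\pi_1(Y_{\bar\eta})$.

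The only difference is cosmetic. The paper proves ``$\varphi_\eta$ $\pi_1$-open $\Rightarrow$ $\varphi$ $\pi_1$-open'' directly from the surjectivity of $\hat\pi_1(Y_\eta)\to\hat\pi_1(Y)$ and $\hat\pi_1(X_\eta)\to\hat\pi_1(X)$. You instead obtain both directions from the single geometric criterion, which is why you need the fibre-product description of $\hat\pi_1(X_\eta)$.
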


\begin{proof}
That `$\varphi_\eta$ is $\pi_1$-open' implies `$\varphi$ is $\pi_1$-open' follows easily from the commutative diagram of profinite groups with outer homomorphisms
\[
\begin{tikzcd}
  \etfdtlgrp(Y_{\!\eta}, y)\rar{\etfdtlgrp(\varphi_{\eta})} \arrow[d, two heads] & \etfdtlgrp(X_\eta, x)\arrow[d,two heads]\\
  \etfdtlgrp(Y, y) \rar{\etfdtlgrp(\varphi)} & \etfdtlgrp(X, x).
\end{tikzcd}
\] 

Let $\bar{\eta}$ be a geometric point over $\eta$.
The diagram 
\[
\begin{tikzcd}
  1 \rar 
  & \etfdtlgrp(Y_{\!\bar\eta}, y)  \rar \dar{\etfdtlgrp(\varphi_{\bar{\eta},\ast})} 
  & \etfdtlgrp(Y_{\!\eta}, y) \rar\dar{\etfdtlgrp(\varphi_{\eta})} 
  & \etfdtlgrp(\eta,\bar \eta) \rar \arrow[d,equal] 
  & 1 \\
  1 \rar 
  & \etfdtlgrp(X_{\bar\eta}, x)  \rar 
  & \etfdtlgrp(X_\eta, x) \rar 
  & \etfdtlgrp(\eta,\bar \eta) \rar
  & 1
\end{tikzcd}
\]
has exact rows by \cite[\nopp IX,6.1]{SGA1}, and shows that $\etfdtlgrp(\varphi_\eta)$ is open if and only if $\etfdtlgrp(\varphi_{\bar{\eta}})$ is open.

Moreover, we have a short exact sequence
\[
1 \longrightarrow \etfdtlgrp(X_{\bar{\eta}}, x) \stackrel{\alpha}{\longrightarrow} \etfdtlgrp(X, x) \longrightarrow \etfdtlgrp(S,\bar \eta) \longrightarrow 1.
\] 
The exactness except the injectivity of $\alpha$ follows from \Cref{thm:generalisation-of-Friedlander}.  
Moreover, by \cite[Proposition~1.4]{Friedl-elfib}, $\ker(\alpha)$ lies in the centre of $\etfdtlgrp(X_{\bar{\eta}}, x)$ (in fact, this is a formal consequence of $X\to S$ being a quasifibration). But the centre is trivial by \Cref{lem:centre-free}.

Together with \cref{lem:Katz-Lang plus epsilon}, we obtain a diagram with exact rows
\[
\begin{tikzcd}
  & \etfdtlgrp(Y_{\!\bar\eta}, y)  \rar \dar{\etfdtlgrp(\varphi_{\bar{\eta}})} 
  & \etfdtlgrp(Y, y) \rar\dar{\etfdtlgrp(\varphi)} 
  & \etfdtlgrp(S,\bar \eta) \rar \arrow[d,equal] 
  & 1 \\
  1 \rar 
  & \etfdtlgrp(X_{\bar\eta}, x)  \rar 
  & \etfdtlgrp(X, x) \rar 
  & \etfdtlgrp(S,\bar \eta) \rar
  & 1 \ .
\end{tikzcd}
\]
A diagram chase shows that the index of the image of $\etfdtlgrp(\varphi)$ agrees with the index of $\etfdtlgrp(\varphi_{\bar {\eta}})$, hence $\etfdtlgrp(\varphi)$ is open if and only if $\etfdtlgrp(\varphi_{\bar {\eta}})$ is open. 
\end{proof}

\begin{theorem} 
\label{maintheorem-sharp}
Let $S$ be a normal, connected, excellent scheme in characteristic $0$, such that the residue field at the generic point $\eta$ of $S$ is sub-$p$-adic. Let $X \to S$ be a relative hyperbolic curve. 

Then, for all\/  $Y \to S$ flat and of finite type with connected geometric generic fibre, such that $Y$ is normal, connected, and the smooth locus of $Y \to S$ surjects on to $S$, the natural map 
         \begin{equation}
         \label{eq:main map}
                \ethtpytype \from \Hom_{S}^{\dom}(Y, X) 
                \isomto 
                \map^{\pioneop}_{\ethtpytype(S)}\big(\ethtpytype(Y), \ethtpytype(X)\big)
         \end{equation}
is an equivalence of anima.
In particular, $\map^{\pioneop}_{\ethtpytype(S)}\big(\ethtpytype(Y), \ethtpytype(X)\big)$ is discrete.
\end{theorem}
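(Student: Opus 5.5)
The plan follows the outline in the introduction, upgraded to the level of anima. Write $M = \map_{\ethtpytype(S)}(\ethtpytype(Y), \ethtpytype(X))$ and $M_\eta = \map_{\ethtpytype(\eta)}(\ethtpytype(Y_{\!\eta}), \ethtpytype(X_\eta))$.

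\emph{Base change map.} By \cref{thm:generalisation-of-Friedlander}, $X\to S$ and all its base changes to normal schemes are quasifibrations; this covers $X_\eta \to \eta$. Hence \cref{constr:basechange-for-maps-of-etale-homotopy-types} provides $(\blank)_\eta \colon M \to M_\eta$.

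\emph{Injectivity of $(\blank)_\eta$.} We apply \cref{thm:spreading-out-homotopies} with $S' = \eta$. The birationality requirement holds since $\eta \to S$ is the generic point of a normal integral scheme. By noetherian approximation, or since $S$ is excellent hence noetherian, the noetherian hypotheses are satisfied. The geometric fibres of $X$ are hyperbolic curves, which are profinite étale $\K(\pi,1)$ by \cref{prop:hyperbolic-curves-are-K-pi-1}. So $(\blank)_\eta$ is a monomorphism of anima, and by \cref{thm:eta-open} it restricts to a monomorphism $M^{\pioneop} \inj M_\eta^{\pioneop}$. The paper only states that theorem for $\eta$ and for $\varphi$ itself; we note that $\pi_1$-openness is a property of components, so the restriction is again a monomorphism.

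\emph{The generic fibre.} $Y_{\!\eta}$ is a normal, geometrically connected variety over the sub-$p$-adic field $\kappa(\eta)$. By \cref{cor:Mochizuki-via-homotopy-types}, $\Hom^{\dom}_\eta(Y_{\!\eta},X_\eta) \to \pi_0 M_\eta^{\pioneop}$ is a bijection. We also need $M_\eta^{\pioneop}$ to be discrete. Since $X_\eta$ is a $\K(\pi,1)$, \cref{cor:etale-htpy-classes-K-pi-1} identifies $M_\eta$ with maps $\B\etfdtlgrp(Y_{\!\eta}) \to \B\etfdtlgrp(X_\eta)$ over $\B\absGal{\kappa(\eta)}$. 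The $\pi_1$ of this mapping anima at $\varphi$ is the centraliser of the image of $\etfdtlgrp(\varphi)$ in $\etfdtlgrp(X_{\bar\eta})$, and its higher homotopy vanishes by 1-truncatedness. For $\pi_1$-open $\varphi$ the image meets $\etfdtlgrp(X_{\bar\eta})$ in an open subgroup, and by \cref{lem:centre-free} (strong centre-freeness) the centraliser of an open subgroup is trivial. Hence $M_\eta^{\pioneop}$ is discrete, and then $M^{\pioneop}$ is discrete as well, because it is a subobject (monomorphism) of a discrete anima.

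\emph{Cartesian square.} It remains to show that $\Hom_S^{\dom}(Y,X) \to \pi_0 M^{\pioneop}$ is bijective. Injectivity follows from the commutative square, \cref{prop:injectivity-via-generic-fibre} and the bijection on $\eta$. Note that $Y$ is flat and dominant, so $Y_{\!\eta}$ is dense, and that $Y\to S$ dominant with $Y$ normal gives $Y$ reduced.

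For surjectivity, take $\varphi \in \pi_0 M^{\pioneop}$. Then $\varphi_\eta$ is $\pi_1$-open and comes from a dominant $g \colon Y_{\!\eta} \to X_\eta$. Moreover $\ethtpytype(g) \simeq \varphi_\eta$, so the square in \cref{prop:extension of curves controlled by homotopy}\ref{propitem:homotopy types} commutes with $\psi = \varphi$ (via the definition of the base change map and adjunction). That proposition, applied with $S' = \eta$, yields $f\colon Y\to X$ extending $g$. Here $Y$ is irreducible since it is normal and connected, and $S$ is excellent. The morphism $f$ is dominant since $g$ is. Finally $\ethtpytype(f)$ and $\varphi$ have the same image under the injection $(\blank)_\eta$, so they agree.

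The main obstacle I expect is the discreteness, i.e.\ the identification of the homotopy groups of the mapping anima over $\B\absGal{k}$ with centralisers. If that route is awkward, I would instead deduce discreteness directly from the monomorphism into $M_\eta^{\pioneop}$ together with the centraliser computation in \cref{cor:htpy-classes-of-maps-over-BG}.
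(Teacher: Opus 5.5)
Your proposal is correct and is essentially the paper's own proof. You use the same ingredients: \cref{prop:injectivity-via-generic-fibre}, \cref{thm:spreading-out-homotopies}, \cref{cor:Mochizuki-via-homotopy-types}, \cref{prop:extension of curves controlled by homotopy} and \cref{thm:eta-open}. Discreteness comes, as in the paper, from the monomorphism into the generic-fibre mapping anima plus the centraliser/strong centre-freeness argument packaged there as \cref{cor:HomAnimadiscrete}. Both you and the paper leave one point implicit: the map $\Hom_S^{\dom}(Y,X)\to\pi_0 M^{\pioneop}$ is well defined because $f$ dominant makes $\ethtpytype(f_\eta)$ $\pi_1$-open, and hence $\ethtpytype(f)$ is $\pi_1$-open by \cref{thm:eta-open}.
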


\begin{proof} 
  By now  we have constructed a cartesian diagram
\begin{equation}
\label{eq:main square-copy}
    \begin{tikzcd}
        \Hom_{S}^{\dom}(Y, X) \arrow[r, "{(4)}"] \arrow[d, hook, "{(1)}"']
        & \ethtpycls[Y][X][S]^{\etapioneop} \arrow[d, hook, "{(2)}"] \\ 
        \Hom_{\eta}^{\dom}(Y_{\!\eta}, X_{\eta}) \arrow[r, "{(3)}"', "{\sim}"]
        & \etopcls[Y_{\!\eta}][X_\eta][\eta]
    \end{tikzcd}
\end{equation}
as predicted in \eqref{eq:main square} in the introduction.
The map $\diagr{1}$ is the base change along $\eta \hookrightarrow S$ and injective by \Cref{prop:injectivity-via-generic-fibre}.
The map $\diagr{2}$ is the base change map constructed in \cref{subsec:the-base-change-map} and is injective by \Cref{thm:spreading-out-homotopies} (spreading out homotopies).
The map $\diagr{3}$ is induced by $\ethtpytype$ and is bijective by \Cref{cor:Mochizuki-via-homotopy-types}, the homotopy-theoretic reformulation of Mochizuki's Theorem.
Moreover, the diagram is cartesian by \Cref{prop:extension of curves controlled by homotopy} (spreading out morphisms).
\Cref{thm:eta-open} shows the equality
\[
\ethtpycls[Y][X][S]^{\etapioneop} = \ethtpycls[Y][X][S]^{\pi_1\text{-op}}
\]
in the upper right corner, namely that the two conditions $\etapioneop$ and $\pi_1\text{-op}$ are equivalent.
We therefore have proven that the map
\[
      \ethtpytype \from \Hom_{S}^{\dom}(Y, X) \longrightarrow \etopcls[Y][X][S]
\]
is bijective.
To complete the proof of \Cref{maintheorem-sharp} it remains to show that $\map^{\pioneop}_{\ethtpytype(S)}(\ethtpytype(Y), \ethtpytype(X))$ is a discrete anima.
By \Cref{thm:spreading-out-homotopies}, the base change map
\[
	\map^{\pioneop}_{\ethtpytype(S)}\big(\ethtpytype(Y), \ethtpytype(X)\big) 
	\longrightarrow 
	\map^{\pioneop}_{\ethtpytype(\eta)}\big(\ethtpytype(Y_{\!\eta}), \ethtpytype(X_\eta)\big)
\]
is a monomorphism.
Therefore it suffices to show that the right-hand side is discrete.
By \cref{prop:hyperbolic-curves-are-K-pi-1}, we have a natural equivalence
\[
	\map^{\pioneop}_{\ethtpytype(\eta)}\big(\ethtpytype(Y_{\!\eta}), \ethtpytype(X_\eta)\big) 
	\isomto 
	\map^{\pioneop}_{\B\!\etfdtlgrp(\eta)}\big(\B\!\etfdtlgrp(Y_{\!\eta}), \B\!\etfdtlgrp(X_\eta)\big).
\]
Finally, $\etfdtlgrp(X_{\bar{\eta}})=\ker(\etfdtlgrp(X_{{\eta}}) \lsurj \etfdtlgrp(\eta)) $ is strongly centre-free by \Cref{lem:centre-free}.
We conclude by \Cref{cor:HomAnimadiscrete} that $\map^{\pioneop}_{\B\!\etfdtlgrp(\eta)}(\B\!\etfdtlgrp(Y_{\!\eta}), \B\!\etfdtlgrp(X_\eta))$ is discrete.
\end{proof} 

%%% Local Variables:
%%% mode: LaTeX
%%% TeX-master: "../haupt"
%%% End:

%% file: content/appendix.tex
%----------------------------------------------------------------------------------------------------------------------------------
\section{Some (profinite) homotopy theory}
\label{sec:app}

%----------------------------------------------------------------------------------------------------------------------------------
\subsection{Profinite anima}
\label{app:profinite-anima}

In the following we recall the construction of an $\infty$-categorical refinement of profinite homotopy theory.

\begin{recollection}[Pro-categories of $\infty$-categories] \label{rec:pro-categories}
    Let $\catC$ be an $\infty$-category.
    \begin{thmlist}
        \item \label{recitem:pro-category}
        A functor $j \from \catC \to \Pro(\catC)$ \emph{exhibits $\Pro(\catC)$ as a pro-category of $\catC$} if the following two conditions are satisfied.\smallskip
        \begin{thmlist}
            \item
            The $\infty$-category $\Pro(\catC)$ admits small cofiltered limits.
            \item
            Given any other $\infty$-category $\catE$ with small cofiltered limits, precomposition with $j$ induces an equivalence of $\infty$-categories
                \[
                    j^{*} \from \Fun^{\cofilt}(\Pro(\catC), \catE) \longrightarrow \Fun(\catC, \catE),
                \]
                where $\Fun^{\cofilt}(\Pro(\catC), \catE) \subset \Fun(\Pro(\catC), \catE)$ denotes the full subcategory spanned by the functors that preserve cofiltered limits.
        \end{thmlist}
        \item \label{recitem:prorepresentable}
            A functor $F \from \catC \to \Ani$ is called \defding{prorepresentable} if there exists a cofiltered diagram
                \[
                \catI \longrightarrow \catC,\ i \mapsto c_{i} \quad \text{and an equivalence} \quad F \simeq \limit_{i} j(c_{i}),
                \]
                where $j \from \catC \to \Fun(\catC, \Ani)^{\op},\ c \mapsto \rep^{c} = \map_{\catC}(c, \blank)$ denotes the Yoneda embedding.
                We write $\Fun^{\pro}(\catC, \Ani) \subset \Fun(\catC, \Ani)$ for the full subcategory spanned by the prorepresentable functors.
                Moreover, we follow Deligne and write
                \[
                    \prolimit_{i} c_{i} = \limit_{i} j(c_{i}).
                \]
        \item \label{recitem:lex}
              By the dual of \HTT{}{5.3.5.4}, the functor
              \[
                    \catC \longrightarrow \Fun^{\pro}(\catC, \Ani)^{\op},\quad c \mapsto \rep^{c} = \map_{\catC}(c, \blank)
              \]
              induced by the Yoneda embedding exhibits $\Fun^{\pro}(\catC, \Ani)^{\op}$ as a pro-category of $\catC$.
              Consequently, any object $c \in \Pro(\catC)$ admits a representation $c \simeq \prolimit_{i} c_{i}$ for some cofiltered diagram $\{c_{i}\}_{i \in \catI}$ in $\catC$.
              Moreover, if $\catC$ admits finite limits, then a functor is prorepresentable if and only if it preserves finite limits.
              Therefore, $\Pro(\catC) = \Fun^{\pro}(\catC, \Ani)^{\op}$ coincides with the full subcategory $$\Fun^{\lex}(\catC, \Ani)^{\op} \subset \Fun(\catC, \Ani)^{\op}$$ of functors preserving finite limits (a.k.a.\ left exact functors) in this case.
    \end{thmlist}
\end{recollection}

\begin{remark} 
The above definition of pro-categories of $\infty$-categories extends the corresponding notion for $1$-categories.
Let $\catC$ be a $1$-category and let $j \from \catC \to \Pro(\catC)$ be its $1$-categorical pro-category.
Then the induced functor 
\[
	\nerve(j) \from \nerve(\catC) \to \nerve(\Pro(\catC))
\]
exhibits $\nerve(\Pro(\catC))$ as $\infty$-categorical pro-category of $\nerve(\catC)$ by (the dual of) \HTT{}{5.3.5.6}.
 
 For an $\infty$-category $\catC$, we obtain a functor of $1$-categories
\[
	\h\!\Pro(\catC)\lang \Pro(\nerve(\h\! \catC))=\Pro (\h\!\catC),
\]
which in general is far from being an equivalence.
\end{remark}

\begin{definition}[Profinite anima]
\label{def:profinite-anima}
        Let $B$ be an anima.
        \begin{thmlist}
            \item
	    The anima $B$ is \defding{truncated} if  there exists a number $N$ such that $\pi_i(B,b) = 0$ for all $b \in B$ and all $i \geq N$.
            \item
            The anima  $B$ is said to be \defding{$\pi$-finite} if $B$ is truncated and $\pi_i(B,b)$ is finite for all $b \in B$ and all $i\ge 0$.
            \item
            We write $\SigmafinAni \subset \Ani$ for the full subcategory spanned by the $\pi$-finite anima.
            \item
            We call $\Pro(\SigmafinAni)$ the $\infty$-category of \defding{profinite anima}.
        \end{thmlist}
\end{definition}

\begin{recollection}[homotopy and cohomology groups]
  \label{rec:homotopy-and-cohomology-of-profinite-anima}
  We write $\finSets$ (resp. $\finGrp$, $\finAb$) for the category of \emph{finite sets} (resp. \emph{finite (abelian) groups}), and $\D(\bZ)$ for the derived $\infty$-category of the category $\Ab$ of abelian groups.
  \begin{deflist}
    \item 
    The connected component functor $\conncomp \from \pifinAni \to \finSets$ extends along cofiltered limits to a functor
    \[
      \conncomp \from \Pro(\pifinAni) \longrightarrow \Pro(\finSets), \quad B = \prolimit_i B_i \mapsto \prolimit_i \conncomp(B_i).
    \]
    We refer to $\conncomp (B)$ as the \defding{(profinite) set of connected components of $B$}.

    \item 
    For $k \geq 1$, the homotopy group functor $\htpygrp_k \from \ptpifinAni \to \finGrp$ extends along cofiltered limits to a functor
      \begin{alignat*}{3}
        \htpygrp_k \from \Pro(\pifinAni)_{\terminal} &= \Pro(\ptpifinAni) &&\longrightarrow \Pro(\finGrp) \\
        (B, b) &= \prolimit_i (B_i, b_i) &&\longmapsto \prolimit_i \htpygrp_k(B_i, b_i).
      \end{alignat*}
    We refer to $\htpygrp_{k}(B, b)$ as the \defding{$k$-th (profinite) homotopy group of $(B, b)$}. 
    For $k \geq 2$, the essential image of the functor $\pi_k$ is contained in the full subcategory $\Pro(\finAb) \subset \Pro(\finGrp)$ of profinite abelian groups.
   
    \item 
    Let $M$ be an abelian group.  Since $\D(\bZ)$ admits filtered colimits, the singular cochain complex functor 
    $\C^{*}(\blank, M) \from \pifinAni \to \D(\bZ)^{\op}$ 
    extends to a cofiltered limit preserving functor
    \[
      \C^{*}(\blank, M) \from \Pro(\pifinAni) \longrightarrow \D(\bZ)^{\op}, \quad B = \prolimit_i B_i \mapsto \colimit_i \C^{*}(B_i, M).
    \]
    For any integer  $k \geq 0$, we write $\rH^k(B, M)$ for the abelian group $\rH^k(\C^{*}(B, M))$ and call it the \emph{$k$-th cohomology group of $B$ with coefficients in $M$}. Since filtered colimits  are exact in $\Ab$, we have $\rH^k(B, M)  = \colimit_i \rH^k(B_i, M)$.
  \end{deflist}
  We refer the reader to \cite[\SAGsubsec{E.5.2}, \SAGsubsec{E.7.1}]{SAG} for more details.
\end{recollection}

The following lemma is an immediate consequence of \SAG{}{E.4.6.1} (in the case $Y =\ \point$).

\begin{lemma}
  \label{lem:n-truncated-profinite-anima}
  Let $n \geq -1$.
  The following are equivalent for a profinite anima $B$.
  \begin{thmlist}
    \item $B$ is an $n$-truncated object of $\Pro(\pifinAni)$.    
    \item For all $k > n$ and all $b \in B$, the (profinite) homotopy group $\htpygrp_k(B, b)$ vanishes.
    \item $B$ lies in the essential image of the inclusion $\Pro(\Ani_{\pi, \leq n}) \subset \Pro(\pifinAni)$ induced by the inclusion $\Ani_{\pi, \leq n} \subset \pifinAni$.
  \end{thmlist}
\end{lemma}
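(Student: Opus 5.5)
The plan is to deduce everything from the cited result \SAG{}{E.4.6.1} in the case $Y = \point$, together with the fact that truncation in $\Pro(\pifinAni)$ is computed levelwise on pro-systems.

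For \textup{(3)} $\Rightarrow$ \textup{(1)}, write $B \simeq \prolimit_i B_i$ with $B_i \in \Ani_{\pi,\leq n}$. For any profinite anima $C = \prolimit_j C_j$ we have
\[
\map(C, B) \simeq \limit_i \colimit_j \map(C_j, B_i).
\]
Each $\map(C_j, B_i)$ is $n$-truncated because $B_i$ is. Filtered colimits and arbitrary limits of $n$-truncated anima remain $n$-truncated. Hence $B$ is $n$-truncated.

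For \textup{(1)} $\Rightarrow$ \textup{(3)}, first note that $\trunc_{\leq n}$ on $\pifinAni$ extends to a cofiltered-limit-preserving functor on $\Pro(\pifinAni)$, sending $\prolimit_i B_i$ to $\prolimit_i \trunc_{\leq n} B_i$. This functor is left adjoint to the inclusion of $\Pro(\Ani_{\pi,\leq n})$; this is the content of \SAG{}{E.4.6.1}. The unit $B \to \trunc_{\leq n} B$ exhibits $\trunc_{\leq n}B$ as the $n$-truncation in $\Pro(\pifinAni)$. If $B$ is $n$-truncated, the unit is therefore an equivalence, so $B$ lies in the essential image.

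For \textup{(2)} $\Leftrightarrow$ \textup{(3)}, homotopy groups are computed levelwise, as in \cref{rec:homotopy-and-cohomology-of-profinite-anima}. So \textup{(3)} $\Rightarrow$ \textup{(2)} is immediate.

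Conversely, assume \textup{(2)}. Consider the map $B \to \trunc_{\leq n} B$. It induces a bijection on $\pi_0$ and isomorphisms on $\pi_k$ for $k \leq n$. By \textup{(2)}, the groups $\pi_k$ for $k > n$ vanish on both sides, so all homotopy groups agree. The profinite Whitehead theorem \ref{thm:profinite-whitehead-theorem} then shows the map is an equivalence.

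The main obstacle is the Whitehead step, since it requires care about basepoints ranging over the profinite set $\pi_0(B)$. I would simply invoke the cited statement from SAG for this, and more generally for the identification of truncation with the levelwise construction, rather than reprove it.
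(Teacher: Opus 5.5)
Your proposal takes essentially the same approach as the paper, which gives no argument beyond citing \SAG{}{E.4.6.1} with $Y=\point$; your hand arguments for (3)$\Rightarrow$(1), (3)$\Rightarrow$(2) and (2)$\Rightarrow$(3) are fine. One thing to sharpen: in (1)$\Rightarrow$(3) the adjunction $\Pro(\trunc_{\leq n}) \dashv \Pro(\iota)$ is purely formal (the paper uses exactly this adjunction, together with the present lemma, to prove \cref{cor:n-truncation-profinite-anima}), so what you genuinely need from SAG is that every $n$-truncated object of $\Pro(\pifinAni)$ is local for this reflection, equivalently has $\htpygrp_k(B,b)=0$ for $k>n$; without that input, your claim that the unit is the $n$-truncation in $\Pro(\pifinAni)$ already presupposes (3).
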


\begin{corollary}
  \label{cor:n-truncation-profinite-anima}
  Let $n \geq -1$ and $B$ a profinite anima.
  \begin{thmlist}
    \item \label{coritem:n-truncation} 
    The inclusion $\Pro(\pifinAni)_{\leq n} \subset \Pro(\pifinAni)$ admits 
    a cofiltered limit preserving left adjoint
    \[
      \trunc_{\leq n} \from \Pro(\pifinAni) \longrightarrow \Pro(\pifinAni)_{\leq n},
    \]
    called the \emph{$n$-truncation functor}.
    \item \label{coritem:iso-on-htpy-grp-up-to-n} 
    The unit map $B \to \trunc_{\leq n}\! B$ induces isomorphisms $\htpygrp_k(B, b) \isomto \htpygrp_k(\trunc_{\leq n}\! B, b)$ for all $k \leq n$ and $b \in B$.
  \end{thmlist}
\end{corollary}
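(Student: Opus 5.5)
The plan is to build $\trunc_{\leq n}$ by extending the classical truncation of $\pi$-finite anima along cofiltered limits, and then to check the adjunction and the statement on homotopy groups one level at a time.

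\emph{Construction.} For a $\pi$-finite anima $B$, the truncation $\trunc_{\leq n} B$ is again $\pi$-finite: its homotopy groups are those of $B$ in degrees $\leq n$ and vanish above. This gives a functor $\trunc_{\leq n} \from \pifinAni \to \Ani_{\pi, \leq n}$ that is left adjoint to the inclusion. By the universal property of pro-categories (\cref{rec:pro-categories}), it extends uniquely to a cofiltered-limit-preserving functor
\[
\trunc_{\leq n} \from \Pro(\pifinAni) \longrightarrow \Pro(\Ani_{\pi,\leq n}), \qquad \prolimit_i B_i \longmapsto \prolimit_i \trunc_{\leq n} B_i .
\]
By \cref{lem:n-truncated-profinite-anima}, the essential image of $\Pro(\Ani_{\pi,\leq n}) \to \Pro(\pifinAni)$ is exactly $\Pro(\pifinAni)_{\leq n}$. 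This functor is also fully faithful, since $\Pro$ of a fully faithful functor is fully faithful, as one sees from the formula for mapping anima below. So the target may be identified with $\Pro(\pifinAni)_{\leq n}$.

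\emph{Adjunction.} Let $B = \prolimit_i B_i$ be a profinite anima and $C = \prolimit_j C_j$ with every $C_j \in \Ani_{\pi,\leq n}$. Then
\[
\map(B, C) \simeq \limit_j \colimit_i \map(B_i, C_j) \simeq \limit_j \colimit_i \map(\trunc_{\leq n} B_i, C_j) \simeq \map(\trunc_{\leq n} B, C).
\]
The middle step is the adjunction for anima, which applies because $C_j$ is $n$-truncated. The composite equivalence is induced by precomposition with the levelwise units $B_i \to \trunc_{\leq n} B_i$, which assemble to a unit map $B \to \trunc_{\leq n} B$. This proves that $\trunc_{\leq n}$ is left adjoint to the inclusion, and it preserves cofiltered limits by construction. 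That settles \ref{coritem:n-truncation}.

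\emph{Homotopy groups.} Take a point $b$ of $B$, i.e.\ a map $\point \to B$. It corresponds to a compatible system of points $b_i \in B_i$, by cocompactness of $\point$ in the pro-category. The homotopy groups in \cref{rec:homotopy-and-cohomology-of-profinite-anima} are computed levelwise, so the map in question is
\[
\prolimit_i \htpygrp_k(B_i, b_i) \longrightarrow \prolimit_i \htpygrp_k(\trunc_{\leq n} B_i, b_i).
\]
For $k \leq n$ this is a limit of isomorphisms by classical truncation theory, hence an isomorphism. That gives \ref{coritem:iso-on-htpy-grp-up-to-n}.

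The main point needing care is the identification of $\Pro(\Ani_{\pi,\leq n})$ with the $n$-truncated objects of $\Pro(\pifinAni)$, together with full faithfulness. Both are supplied by \cref{lem:n-truncated-profinite-anima}, so the remaining steps are formal.
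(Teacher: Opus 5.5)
Your proposal is correct and follows the paper's proof essentially verbatim: truncate in $\Ani_\pi$, extend along cofiltered limits to get the adjunction for $\Pro(\Ani_{\pi,\le n})$, read off homotopy groups levelwise, and identify $\Pro(\Ani_{\pi,\le n})$ with $\Pro(\pifinAni)_{\le n}$ via \cref{lem:n-truncated-profinite-anima}. Your explicit mapping-anima computation simply spells out what the paper calls ``extending along cofiltered limits''. One small nit: a point of $B$ is a compatible system of points $b_i$ because $\map(\point, \prolimit_i B_i) \simeq \limit_i B_i$ by the definition of the limit, not because $\point$ is cocompact.
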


\begin{proof}
  By \HTT{}{5.5.6.18} and \HTT{}{5.2.8.16}, the inclusion $\Ani_{\leq n} \subset \Ani$ admits a left adjoint $\trunc_{\leq n}$ satisfying \labelcref{coritem:iso-on-htpy-grp-up-to-n}, and therefore the functor $\trunc_{\leq n}$ restricts to a left adjoint of $\Ani_{\pi, \leq n} \subset \Ani_{\pi}$.
  Thus, by extending along cofiltered limits, we obtain an adjunction as in \labelcref{coritem:n-truncation} but for the subcategory $\Pro(\Ani_{\pi, \leq n})$.
  Since homotopy groups of profinite anima are formed levelwise, property \labelcref{coritem:iso-on-htpy-grp-up-to-n} remains true.
  Since, by \Cref{lem:n-truncated-profinite-anima}, the essential image of $\Pro(\Ani_{\pi, \leq n}) \subset \Pro(\pifinAni)$ is precisely given by $\Pro(\pifinAni)_{\leq n}$, we conclude.
\end{proof}

\begin{recollection}[Profinite completion]
\label{rec:profinite-completion}
        \begin{thmlist}
            \item The inclusion $\SigmafinAni \inj \Ani$ induces a fully faithful embedding
                    \[
                         \Pro(\SigmafinAni) \linj \Pro(\Ani) .
                    \]
            \item Since $\SigmafinAni \subset \Ani$ is stable under finite limits, $\Pro(\SigmafinAni) \subset \Pro(\Ani)$ admits a left adjoint
                    \[
                        (-)^\wedge_\pi \from \Pro(\Ani) \longrightarrow \Pro(\SigmafinAni),
                    \]
                    called \defding{profinite completion}.
        \end{thmlist}
\end{recollection}

An important feature of profinite anima, which does not hold for arbitrary pro-anima, is that equivalences can be detected on homotopy groups.

\begin{theorem}[Profinite Whitehead theorem, {\SAG{}{E.3.1.6}}]
  \label{thm:profinite-whitehead-theorem}
  The following are equivalent for a map $\varphi \from E' \to E$ of profinite anima.
  \begin{thmlist}
    \item The map $\varphi$ is an equivalence of profinite anima.
    \item The map $\varphi$ induces an isomorphism on homotopy groups, more precisely:
    \begin{thmlist}
      \item The induced map $\varphi_{*} \from \htpygrp_0(E') \to \htpygrp_0(E)$ is an isomorphism of profinite sets, and
      \item for all $e' \in E'$ with image $e = \varphi(e') \in E$, and all $n \geq 1$, the induced homomorphism
      \[
        \varphi_{*} \from \htpygrp_n(E', e') \to \htpygrp_n(E, e)
      \]
      is an isomorphism of profinite groups.
    \end{thmlist}
  \end{thmlist}
\end{theorem}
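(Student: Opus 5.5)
The implication (1)$\Rightarrow$(2) is formal: $\conncomp$ and $\htpygrp_n$ are functors on $\Pro(\pifinAni)$ and its pointed version (\cref{rec:homotopy-and-cohomology-of-profinite-anima}), so they carry equivalences to isomorphisms. The work is in (2)$\Rightarrow$(1). My plan is to pass through Postnikov towers and reduce to a statement about maps into $\pi$-finite anima, which can be checked by obstruction theory.

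\emph{Step 1 (Postnikov completeness).} Every $\pi$-finite anima is truncated, so $Z\simeq\limit_n\trunc_{\le n}Z$ holds trivially for $Z\in\pifinAni$. Since $\trunc_{\le n}$ preserves cofiltered limits (\cref{cor:n-truncation-profinite-anima}), it follows that every $E\in\Pro(\pifinAni)$ satisfies $E\simeq\limit_n\trunc_{\le n}E$. Hence it suffices to show that $\trunc_{\le n}\varphi$ is an equivalence for every $n$. By \cref{cor:n-truncation-profinite-anima}, $\trunc_{\le n}\varphi$ still satisfies hypothesis (2) in degrees $\le n$. Moreover $\trunc_{\le n}E$ is an $n$-truncated profinite anima, so it lies in $\Pro(\Ani_{\pi,\le n})$ by \cref{lem:n-truncated-profinite-anima}.

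\emph{Step 2 (Induction on $n$).} For $n=0$ we use $\Pro(\Ani_{\pi,\le 0})\simeq\Pro(\finSets)$, which is the category of profinite sets, so an isomorphism on $\conncomp$ is an equivalence. For the inductive step, assume $\trunc_{\le n-1}\varphi$ is an equivalence. We must show that for every $n$-truncated $\pi$-finite $Z$, precomposition
\[
\map(\trunc_{\le n}E,Z)\to\map(\trunc_{\le n}E',Z)
\]
is an equivalence; the Yoneda lemma in the pro-category then gives the claim. Both sides map to $\map(\trunc_{\le n-1}E,\trunc_{\le n-1}Z)$, on which $\varphi$ already induces an equivalence, so it suffices to compare fibres. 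Over a fixed map $g$ into $\trunc_{\le n-1}Z$, the fibre $Z\to\trunc_{\le n-1}Z$ is classified by a $k$-invariant
\[
\trunc_{\le n-1}Z\to \B\!\htpygrp_1\text{-twisted }\K(A,n+1)
\]
with $A=\htpygrp_n(Z)$ finite. Consequently the space of lifts is the space of nullhomotopies of $g^*k$, a torsor-type object whose components and higher homotopy groups are twisted cohomology groups $\rH^{n+1-i}(\,-\,,g^*A)$. These cohomology groups are taken with finite local coefficients, and by \cref{rec:homotopy-and-cohomology-of-profinite-anima} they are colimits of the corresponding levelwise groups.

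\emph{Step 3 (Cohomology comparison, main obstacle).} It remains to show that $\varphi$ induces isomorphisms $\rH^j(\trunc_{\le n}E,\mathcal L)\cong\rH^j(\trunc_{\le n}E',\varphi^*\mathcal L)$ for finite local systems $\mathcal L$ and $j\le n+1$, with injectivity in degree $n+2$ where needed. I would work componentwise and pointed, using the fibre sequence
\[
\K(\htpygrp_n,n)\to\trunc_{\le n}E\to\trunc_{\le n-1}E .
\]
The base is already matched by induction, so through a Serre-type spectral sequence (obtained levelwise and then by filtered colimits, which are exact) the problem reduces to the fibre: one needs $\rH^*(\K(\htpygrp_n(E),n),\mathcal L)\cong\rH^*(\K(\htpygrp_n(E'),n),\mathcal L)$ for an isomorphism of profinite groups $\htpygrp_n(E')\cong\htpygrp_n(E)$. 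This holds because the profinite $\K(\Pi,n)=\prolimit\K(\Pi/U,n)$ depends only on the pro-group.

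I expect the delicate part to be keeping all of this inside $\Pro(\pifinAni)$. In particular, the Serre spectral sequence and the $k$-invariant classification must be set up compatibly with cofiltered limits, which I would do by representing $\varphi$ as a levelwise map of towers and passing to colimits of cohomology at the end.
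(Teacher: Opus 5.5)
The paper gives no argument for this statement; it imports it from \cite[Thm.~E.3.1.6]{SAG}, so your outline has to stand on its own. Step~1 is fine: Postnikov convergence $E\simeq\limit_n\trunc_{\le n}E$ follows from truncatedness of $\pi$-finite anima together with \cref{cor:n-truncation-profinite-anima}. So is the reduction in Step~2 to comparing, over a fixed $g$, spaces of lifts into an $n$-truncated $\pi$-finite $Z$. These lift spaces are filtered colimits of levelwise lift spaces, so obstruction theory with the twisted cohomology of \cref{rec:homotopy-and-cohomology-of-profinite-anima} applies. A small correction: the space of nullhomotopies has its set of components a torsor under $\rH^{n}$ and $\htpygrp_i\cong\rH^{n-i}$. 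So you need isomorphisms in degrees $\le n$ and injectivity only in degree $n+1$, where the obstruction lives.

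The inductive step, however, has genuine gaps.

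\emph{The case $n=1$.} The $k$-invariant description of $Z\to\trunc_{\le n-1}Z$ through a twisted $\K(A,n+1)$ exists only for $n\ge 2$. For $n=1$ the fibres are $\B\!\htpygrp_1(Z,z)$ with $\htpygrp_1$ possibly non-abelian, and there is no cohomological obstruction theory. So your induction cannot pass from $\trunc_{\le 0}$ to $\trunc_{\le 1}$. This step needs a separate non-abelian input, e.g.\ $\map_\point((E,e),\B\!K)\simeq\Hom(\htpygrp_1(E,e),K)$ and the resulting description of $\map(E,\B\!K)$ for \emph{connected} $E$ in \cref{rec:BG} and \cref{lem:concrete-description-of-mapping-anima-into-BG}. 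That description depends only on the profinite group $\htpygrp_1(E,e)$.

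\emph{Working componentwise.} \enquote{Work componentwise and pointed} is not available as stated. $\conncomp(E)$ is a profinite set, typically infinite, so $E$ is not the coproduct of its components and base points cannot be chosen uniformly. Yet both the non-abelian step and your Serre spectral sequence need connected $E$. You must supply a reduction. One option: for $\pi$-finite $Z$, the assignment $U\mapsto\map(E\times_{\conncomp(E)}U,Z)$ on clopens $U$ is a sheaf of anima on the Stone space $\conncomp(E)$. Its stalk at $t$ is $\map(E_t,Z)$, where $E_t=E\times_{\conncomp(E)}\{t\}$ is connected. Sheaves on a Stone space are hypercomplete with enough points, so stalkwise equivalences are equivalences.

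\emph{Step~3.} This is where the actual pro-bookkeeping lies, and it is only announced. After representing $\varphi$ by a level map, the $\varphi_i$ need not be isomorphisms on $\htpygrp_n$ at any finite level; only the pro-groups are isomorphic. So the levelwise Serre spectral sequences do not compare on $E_2$. Moreover, the $E_2$-terms $\colimit_i\rH^p(\trunc_{\le n-1}E_i;\mathcal H^q(\K(\htpygrp_n(E_i),n);\mathcal L))$ have coefficients that vary with $i$. You have to show these colimits depend only on the pro-equivalence class of the base and the pro-isomorphism class of $\htpygrp_n$. This is an ind-isomorphism argument in the spirit of the pro-Whitehead theorem of Artin--Mazur \cite{AM}. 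The observation that $\K(\Pi,n)$ depends only on the pro-group identifies the fibres pointwise, but it does not by itself provide this comparison.
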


Moreover, we will make use of the following profinite version of the Hurewicz theorem.

\begin{proposition}[Profinite Hurewicz theorem, {\SAG{}{E.7.4.1}}]
  \label{lem:profinite-hurewicz-theorem}
  Let $E$ be a profinite anima which is $n$-connective for some $n \geq 1$, let $e$ be a point of $E$, and let $M$ be an abelian group.
  There are canonical isomorphisms
  \[
    \rH^{k}(E, M) = 
    \begin{cases}
      M & \text{if\/ } k = 0, \\
      \,0 & \text{if\/ } 0 < k < n, \text{and} \\
      \Hom(\htpygrp_n(E, e), M) & \text{if\/ } k = n,
    \end{cases}
  \]
 where $M$ is considered as a discrete topological group and homomorphisms are continuous. 
\end{proposition}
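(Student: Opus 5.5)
We apply the profinite Hurewicz theorem \Cref{lem:profinite-hurewicz-theorem} to the last statement shown, for an $n$-connective profinite anima $E$. The plan is to reduce to the classical Hurewicz theorem for $\pi$-finite anima and pass to the limit, using that cohomology of profinite anima is the colimit of the cohomology of the levels (\cref{rec:homotopy-and-cohomology-of-profinite-anima}).

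First, we reduce to a presentation by $n$-connective levels. Write $E=\prolimit_i E_i$ with $E_i\in\pifinAni$. The pointed version $(E,e)=\prolimit_i(E_i,e_i)$ lives in $\Pro(\ptpifinAni)$. I would replace each $E_i$ by its $(n-1)$-connected cover relative to the base point, i.e. the fibre of $E_i\to\trunc_{\leq n-1}E_i$ over $e_i$. This cover is again $\pi$-finite, so we obtain a new pro-system $E'_i$.

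The key point is that $\prolimit_i E'_i\to E$ is an equivalence. Here the hypothesis enters: $n$-connectivity of $E$ means $\pi_0(E)=\prolimit\pi_0(E_i)$ is a point and the pro-groups $\pi_k(E,e)$ vanish for $k<n$. Thus $\trunc_{\leq n-1}E=\prolimit_i\trunc_{\leq n-1}E_i$ is contractible, using \cref{cor:n-truncation-profinite-anima}. Since fibres commute with cofiltered limits, the fibre of $E\to\trunc_{\leq n-1}E\simeq\point$ is $\prolimit_i E'_i$, so $E\simeq\prolimit_i E'_i$. One could alternatively check this on homotopy groups with \cref{thm:profinite-whitehead-theorem}.

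Second, for each $n$-connective $\pi$-finite $E'_i$ the classical Hurewicz and universal coefficient theorems give
\[
\rH^0(E'_i,M)=M,\qquad \rH^k(E'_i,M)=0 \text{ for } 0<k<n,\qquad \rH^n(E'_i,M)=\Hom(\pi_n(E'_i),M).
\]
For $n=1$ the last formula reads $\Hom(\pi_1^{\mathrm{ab}},M)=\Hom(\pi_1,M)$. These identifications are natural in $i$.

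Third, we take the filtered colimit over $i$. By \cref{rec:homotopy-and-cohomology-of-profinite-anima}, $\rH^k(E,M)=\colimit_i\rH^k(E'_i,M)$. The constant terms give $M$ in degree $0$ and $0$ in degrees $0<k<n$. In degree $n$ we get $\colimit_i\Hom(\pi_n(E'_i),M)$. This colimit is exactly the group of continuous homomorphisms from the profinite group $\pi_n(E,e)=\prolimit_i\pi_n(E'_i)$ to the discrete group $M$, because every continuous homomorphism out of a limit of finite groups factors through some finite stage.

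The main obstacle is the first step. One has to make sure that the connective-cover replacement is functorial in the pro-system and really presents $E$, rather than merely a map of pro-systems that is a pro-isomorphism on homotopy. I would handle this via the fibre-sequence argument with $\trunc_{\leq n-1}$ sketched above, and otherwise cite \SAG{}{E.7.4.1} directly.
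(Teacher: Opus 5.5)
The paper gives no argument of its own here; it simply quotes \SAG{}{E.7.4.1}. Your proposal replaces that citation with a correct, self-contained derivation from facts already recorded in the appendix.

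The reduction is sound. By \Cref{cor:n-truncation-profinite-anima}, truncation commutes with cofiltered limits, so $\trunc_{\leq n-1}E=\prolimit_i\trunc_{\leq n-1}E_i$. This object has trivial $\pi_0$ and trivial $\pi_k$ for all $k\geq 1$, so it is contractible by \Cref{thm:profinite-whitehead-theorem}. Since fibres commute with cofiltered limits, the levelwise $(n-1)$-connected covers $E'_i$ present $E$ itself, not merely something pro-isomorphic on homotopy. At each level, combine the Hurewicz map with the natural evaluation map $\rH^n(E'_i,M)\to\Hom(\rH_n(E'_i;\bZ),M)$. That map is an isomorphism because $\mathrm{Ext}(\rH_{n-1}(E'_i;\bZ),M)=0$. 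This yields isomorphisms natural in $i$, and the filtered colimit of \Cref{rec:homotopy-and-cohomology-of-profinite-anima} finishes.

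The citation buys brevity and Lurie's presentation-independent framework. Your argument makes visible that the profinite statement is just the classical one passed to the limit.

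Two small corrections.

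First, delete your opening sentence. As written it invokes the very statement being proved, although nothing later depends on it.

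Second, in the last step, ``every continuous homomorphism factors through some finite stage'' is too quick when the transition maps of $i\mapsto\pi_n(E'_i)$ are not surjective. A continuous homomorphism $\pi_n(E,e)\to M$ a priori only factors through the image of $\pi_n(E,e)$ in some $\pi_n(E'_j)$. You need that this image equals the image of $\pi_n(E'_k)\to\pi_n(E'_j)$ for some $k\to j$, which is the Mittag-Leffler property of cofiltered systems of finite groups. Equivalently, you can invoke the identification of $\Pro(\finGrp)$ with profinite groups. Either gives both surjectivity and injectivity of
\[
\colimit_i\Hom(\pi_n(E'_i),M)\longrightarrow\Hom_{\mathrm{cont}}(\pi_n(E,e),M).
\]
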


%----------------------------------------------------------------------------------------------------------------------------------
\subsection{Cartesian squares}

The following lemma characterises cartesian squares of  anima and of profinite anima in terms of fibres.

\begin{lemma}[fibrewise criterion of pullbacks]
    \label{lem:criterion-homotopy-pullback-fibres}
    The following are equivalent for a commutative square
    \[
        \begin{tikzcd}
            E' \arrow[r] \arrow[d]
              & E \arrow[d] \\
            B' \arrow[r]
              & B
        \end{tikzcd}
    \]
    of  anima or of profinite anima, respectively:
    \begin{thmlist}
        \item
        \label{lemitem:fibre product of anima}
        The square is cartesian.
        \item
        \label{lemitem:fibre of canonical map}
        For every point $p \in  E \times_{B} B'$, the fibre $\fib_p(E' \to E \times_{B} B')$ is contractible.
        \item
        \label{lemitem:homotopy equivalent fibres}
        For every point $b' \in B'$ with image $b \in B$, the induced map
            \[
                \fib_{b'}(E'\to B') \longrightarrow \fib_{b}(E\to B)
            \]
            is an equivalence.
    \end{thmlist}
\end{lemma}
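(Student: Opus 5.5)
We prove the three equivalences separately, working in anima first and then passing to profinite anima by writing everything as cofiltered limits of $\pi$-finite anima.

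For anima, \ref{lemitem:fibre product of anima} $\Leftrightarrow$ \ref{lemitem:fibre of canonical map} is the standard fact that a map $c \colon E' \to P := E\times_B B'$ is an equivalence if and only if all its fibres over points of $P$ are contractible. This follows from the long exact sequence of homotopy groups, or from straightening: $\Ani_{/P} \simeq \Fun(P,\Ani)$, and $c$ corresponds to the identity exactly when each fibre is a point. For \ref{lemitem:fibre product of anima} $\Leftrightarrow$ \ref{lemitem:homotopy equivalent fibres}, fix $b' \in B'$ and apply pasting of pullbacks. Since $\fib_{b'}(P \to B') \simeq \fib_b(E\to B)$, the map in \ref{lemitem:homotopy equivalent fibres} is identified with $\fib_{b'}(c)$ taken as a map over $B'$, that is, the map $\fib_{b'}(E'\to B') \to \fib_{b'}(P\to B')$. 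A map between anima over $B'$ is an equivalence if and only if it induces an equivalence on all fibres over points of $B'$. This is again straightening, or equivalently the five lemma applied to the two long exact sequences over $B'$, using that every point of $P$ lies over some $b'$.

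For profinite anima we run the same argument with the profinite Whitehead theorem \ref{thm:profinite-whitehead-theorem} in place of the classical one. Here fibres, pullbacks and homotopy groups are all formed levelwise in $\Pro(\pifinAni)$, and they commute with cofiltered limits, as in \cref{rec:homotopy-and-cohomology-of-profinite-anima}. The long exact sequence of homotopy (pro)groups for a fibre sequence of profinite anima is the cofiltered limit of the levelwise sequences. Because cofiltered limits of profinite groups and profinite sets are exact, the sequence stays exact, and the five lemma applies. So \ref{lemitem:fibre of canonical map} or \ref{lemitem:homotopy equivalent fibres} gives isomorphisms on $\pi_0$ and on all $\pi_n$ at every base point of $E'$, and Whitehead then says $c$ is an equivalence. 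Conversely, if the square is cartesian then $c$ is an equivalence, so its fibres are contractible, and \ref{lemitem:homotopy equivalent fibres} follows from pasting exactly as before.

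The main obstacle is the handling of points in the profinite setting. A point of a profinite anima is a map $\point \to E$, and $\pi_0$ is a profinite set, so we must check that testing over all such points is enough for the five-lemma argument. In particular we need the $\pi_0$-level statement: if $c$ induces a bijection on the fibres' $\pi_0$ over every point, then $\pi_0(c)$ is a bijection of profinite sets. Surjectivity on $\pi_0$ is the delicate part. I would get it from the exact sequence of pointed profinite sets $\pi_1(B') \to \pi_0(\fib) \to \pi_0(E') \to \pi_0(B')$, using that its exactness passes to cofiltered limits of finite levels because a cofiltered limit of nonempty finite sets is nonempty. The rest is formal.
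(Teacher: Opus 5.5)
Your proposal is correct and follows essentially the paper's route: the long exact sequence plus the profinite Whitehead theorem gives (b)$\Rightarrow$(a), and pasting of pullbacks gives (a)$\Rightarrow$(c). The one difference is the return direction. You compare two long exact sequences over $B'$ with the five lemma, whereas the paper proves (c)$\Rightarrow$(b) formally by identifying $\fib_p(E'\to E\times_B B')$ with the fibre of $\fib_{b'}(E'\to B')\to\fib_b(E\to B)$ at the point induced by $p$, which avoids the low-degree five-lemma bookkeeping; the $\pi_0$-surjectivity issue you flag is also used implicitly in the paper's (b)$\Rightarrow$(a).
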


\begin{proof}
  If \labelcref{lemitem:fibre of canonical map} holds, then the homotopy sequence for $E' \to E \times_{B} B'$ and every point $e' \in E'$ with image $p$ in $E \times_{B} B'$ yields isomorphisms $\pi_i(E',e') \to \pi_i(E \times_{B} B',p)$ and all $i \in \bN$, so that \labelcref{lemitem:fibre product of anima} follows from \Cref{thm:profinite-whitehead-theorem}.
  The implication \labelcref{lemitem:fibre product of anima} $\Rightarrow$ \labelcref{lemitem:homotopy equivalent fibres} is formal.

For the remaining direction \labelcref{lemitem:homotopy equivalent fibres} $\Rightarrow$
\labelcref{lemitem:fibre of canonical map}, we consider an arbitrary point $p \in E \times_{B} B'$ and its images $e\in E$, $b' \in B'$ and $b \in B$. A diagram chase in
\[
	\begin{tikzcd}
        \fib_{b'}(E'\to B')  
        \cartesian \arrow[d] \arrow[r]
        & E' \arrow[d]
        \\
	\fib_{b}(E\to B) \cartesian \arrow[d] \arrow[r]
        & E \times_{B} B' \cartesian \arrow[d] \arrow[r]
        & E \arrow[d]
        \\
        \point \arrow[r, "b'"]
        & B' \arrow[r] & B   .
     	\end{tikzcd}
\]
shows that $\fib_p(E'\to E\times_B B')$ is equivalent to the homotopy fibre of $\fib_{b'}(E'\to B') \to \fib_{b}(E\to B)$ in the point induced by $e$. Since the homotopy fibre of an equivalence is contractible, this proves the claim.
\end{proof}

\begin{corollary} [pasting and cancellation of pullbacks]\label{cor:cancellation of pullbacks}
Let 
\begin{equation}\label{2out3squares}
\begin{tikzcd}
E''\rar\dar{}&E'\rar\dar{}&E\dar{}\\
B''\rar&B'\rar&B
\end{tikzcd}
\end{equation}
be a commutative diagram of anima or of profinite anima, respectively. 
\begin{enumerate}[(1)]
  \item \label{coritem:pullback-pasting}
  If the right square is cartesian, then the left square is cartesian if and only if the outer rectangle is cartesian.
  \item \label{coritem:pullback-cancellation}
  If $\pi_0(B'') \to \pi_0(B')$ is surjective and the left square is cartesian, then the right square is cartesian if and only if the outer rectangle is cartesian.
\end{enumerate}
\end{corollary}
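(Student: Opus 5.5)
Part (1) is the usual pasting lemma, and part (2) is a cancellation statement that needs the surjectivity hypothesis on $\pi_0$. I would prove both with the fibrewise criterion \Cref{lem:criterion-homotopy-pullback-fibres}, characterisation \ref{lemitem:homotopy equivalent fibres}. The point is that fibre maps compose: for $b'' \in B''$ with images $b' \in B'$ and $b \in B$, the map on fibres of the outer rectangle factors as
\[
\fib_{b''}(E'' \to B'') \longrightarrow \fib_{b'}(E' \to B') \longrightarrow \fib_{b}(E \to B).
\]
This holds in anima and in profinite anima alike, since fibres are pullbacks and the maps come from the commutative diagram \eqref{2out3squares}.

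For \ref{coritem:pullback-pasting}, assume the right square is cartesian. By the criterion, the second map above is an equivalence for every $b' \in B'$, in particular for every $b'$ in the image of $B''$. By two-out-of-three, the composite is an equivalence for all $b''$ if and only if the first map is. Applying the criterion once more, the outer rectangle is cartesian if and only if the left square is. (One could also argue formally via pasting of limits in an $\infty$-category.)

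For \ref{coritem:pullback-cancellation}, assume the left square is cartesian, so the first map is an equivalence for every $b''$. Two-out-of-three then shows that the outer rectangle is cartesian if and only if the second map is an equivalence for every $b'$ of the form $\mathrm{image}(b'')$. To apply the criterion to the right square we need this for all $b' \in B'$. Here surjectivity of $\pi_0(B'') \to \pi_0(B')$ enters: every $b'$ lies in the same path component as the image of some $b''$. A path between two points of $B'$ induces compatible equivalences of the fibres over them, transported along the path and its image in $B$. So the second map is an equivalence at $b'$ exactly when it is one at the image of $b''$.

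The main obstacle is making this transport rigorous for profinite anima, where "points" and "paths" are less naive. I would reduce the criterion to $\pi_0$-level statements: \Cref{lem:criterion-homotopy-pullback-fibres} already phrases things via points, and a point of a profinite anima is a map $\ast \to B'$, with homotopic points giving equivalent fibres because the fibre is a functor of the point up to homotopy. The remaining subtlety is interpreting $\pi_0$-surjectivity of profinite sets as surjectivity on underlying points, i.e. on maps $\ast \to B'$ up to homotopy. I would note that a surjection of profinite sets is surjective on points, so every point of $B'$ is homotopic to the image of a point of $B''$.
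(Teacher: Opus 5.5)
Your proposal is correct and follows the paper's argument for (2): apply the fibrewise criterion of \Cref{lem:criterion-homotopy-pullback-fibres}, use surjectivity on $\pi_0$ to assume $b'$ is the image of some $b''$, and conclude by two-out-of-three on the composite of fibre maps. The differences are minor: for (1) the paper cites the formal pasting law for limits (dual of HTT 4.4.2.1) instead of the fibre criterion, and you spell out two steps the paper leaves implicit, namely transporting fibres along paths and lifting points of $\pi_0$ to actual points.
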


\begin{proof} Assertion~\ref{coritem:pullback-pasting} follows from formal properties of cartesian squares, see (the dual of) \HTT{}{4.4.2.1}. 
In~\ref{coritem:pullback-cancellation} it suffices to show that the right square is cartesian if the outer rectangle is.

For this, using the criterion of \Cref{lem:criterion-homotopy-pullback-fibres}, it suffices to show that for every point $b' \in B'$ with image $b \in B$, the induced map $\fib_{b'}(E'\to B') \to \fib_{b}(E'\to B')$ is an equivalence.
Since $\pi_0(B'') \to \pi_0(B')$ is surjective, we can assume that $b'$ is the image of  point $b''$ of $B''$.
We obtain maps
\[
\fib_{b''}(E''\to B'')\to \fib_{b'}(E'\to B')\to \fib_{b}(E\to B).
\]
Since the left and the outer rectangle of \ref{2out3squares} are cartesian, the maps $\fib_{b''}(E''\to B'')\to\fib_{b'}(E'\to B')$ and $\fib_{b''}(E''\to B'')\to\fib_{b}(E\to B)$ are equivalences. Hence also  $\fib_{b'}(E'\to B')\to\fib_{b}(E\to B)$ is a equivalence, as required. 
\end{proof}

%----------------------------------------------------------------------------------------------------------------------------------
\subsection{Monomorphisms}
\label{subsec:monomorphisms}

\begin{recollection}
  \label{rec:monomorphism-and-epimorphism}
  Let $\catC$ be an $\infty$-category and let $z$ be an object of $\catC$.
  \begin{thmlist}
      \item A map $f \from B' \to B$ of anima is said to be a \defding{monomorphism} if it induces an injection on connected components and an isomorphism on all homotopy groups with respect to all base points.
      \item A map $f \from y \to x$ in $\catC$ is said to be a \defding{monomorphism with respect to $z$} (resp.\ \defding{monomorphism}) if the map
          \[
              f_{\!*} \from \map_{\catC}(z, y) \longrightarrow \map_{\catC}(z, x), \quad g \mapsto f \circ g
          \]
          is a monomorphism of anima (resp.\ for every $z \in \catC$).
          Note that there is no ambiguity in the case $\catC=\Ani$, see \HTT{}{5.5.6.9}. 
          This property is also called \defding{$(-1)$-truncated}. 
      \item A map $f \from y \to x$ in $\catC$ is said to be an \defding{epimorphism with respect to $z$} (resp.\ \defding{epimorphism}) if the induced map
          \[
              f^{*} \from \map_{\catC}(x, z) \longrightarrow \map_{\catC}(y, z), \quad g \mapsto g \circ f
          \]
          is a monomorphism (resp.\ for every $z \in \catC$).
  \end{thmlist}
\end{recollection}

\begin{remarks}
  \label{rem:monomorphisms}
  Let $\catC$ be an $\infty$-category with finite limits and let $\catI$ be a simplicial set.
  \begin{thmlist}
      \item \label{remitem:mono-via-diagonal}
      	A map $f \from y \to x$ in $\catC$ is a monomorphism if and only if the diagonal
	$\Delta_{f} \from y \to y \times_{x} y$ is an equivalence by \HTT{}{5.5.6.15}.

      \item \label{remitem:monos-in-functor-category}
      	As limits and equivalences in $\Fun(\catI, \catC)$ are detected pointwise,
	the preceding remark implies that a natural transformation $\alpha \from F \Rightarrow G$
	between two functors $F, G \from \catI \to \catC$ is a monomorphism if and only if it is
	componentwise a monomorphism.

      \item \label{remitem:left-exact-functor-preserves-monos}
      	Any left exact functor between $\infty$-categories admitting finite limits
	preserves monomorphisms by \HTT{}{5.5.6.16}.

      \item \label{remitem:monos-are-stable-under-limits}
      	In particular, since $\lim_{i \in \catI} \from \Fun(\catI, \catC) \to \catC$ is left exact, it
	preserves monomorphisms, i.e., monomorphisms are stable under limits.

      \item \label{remitem:dualizing-for-epis}
      	Dualising all of the above yields analogous statements for epimorphisms.

      \item \label{remitem:filtered-colimits-in-anima}
      Since filtered colimits in $\Ani$ commute with finite limits, it follows from~\ref{remitem:left-exact-functor-preserves-monos} that monomorphisms of anima are stable under filtered colimits.
  \end{thmlist}
\end{remarks}

\begin{lemma}
\label{lem:cancellation-of-monos}
Let $\catC$ be an $\infty$-category, and let $g \from z \to y$ and $f \from y \to x$ be morphisms in $\catC$. If $f$ is a monomorphism, then $g$ is a monomorphism if and only if $f \circ g$ is a monomorphism.
\end{lemma}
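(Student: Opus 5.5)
We want: if $f$ is a monomorphism, then $g$ is a monomorphism iff $f\circ g$ is. By definition, a morphism in $\catC$ is a monomorphism iff postcomposition with it is a monomorphism of mapping anima for every test object $w\in\catC$. So the plan is to fix an arbitrary $w$ and reduce to the corresponding statement for maps of anima. Postcomposition is functorial, $(f\circ g)_* \simeq f_*\circ g_*$, where
\[
g_*\from \map_{\catC}(w,z)\to\map_{\catC}(w,y), \qquad f_*\from \map_{\catC}(w,y)\to\map_{\catC}(w,x),
\]
and $f_*$ is a monomorphism of anima by hypothesis. It therefore suffices to prove the lemma in $\catC=\Ani$.

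For anima we use the concrete description of \cref{rec:monomorphism-and-epimorphism}: a monomorphism is a map that is injective on $\pi_0$ and an isomorphism on all $\pi_k$, $k\ge1$, at every base point. Let $a\from A\to B$ and $b\from B\to C$ be maps of anima with $b$ a monomorphism. On $\pi_0$, the map $b$ is injective, so $\pi_0(a)$ is injective iff $\pi_0(b\circ a)=\pi_0(b)\circ\pi_0(a)$ is injective. For $k\ge1$ and a base point $p\in A$, the map $\pi_k(b)$ at $a(p)$ is an isomorphism, so $\pi_k(a)$ at $p$ is an isomorphism iff $\pi_k(b\circ a)$ at $p$ is one. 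This establishes the equivalence for anima, and hence for $\catC$ by the reduction above.

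Alternatively, one could argue via \cref{rem:monomorphisms}~\ref{remitem:mono-via-diagonal} and pullback pasting, but that requires finite limits in $\catC$, which are not assumed. The argument through mapping anima avoids this. There is no real obstacle here; the only point needing care is the identification $(f\circ g)_*\simeq f_*\circ g_*$ up to homotopy, which suffices because being a monomorphism is invariant under homotopy.
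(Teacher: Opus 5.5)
Your proof is correct and follows essentially the same route as the paper. Like the paper, you reduce to anima by testing against $\map_{\catC}(w,\blank)$ using $(f\circ g)_*\simeq f_*\circ g_*$, and then check injectivity on $\pi_0$ and bijectivity on $\pi_k$ for $k\geq 1$ at every base point.
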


\begin{proof} Since being a monomorphism can be tested on $\map_{\catC}(z,-)$, it suffices to treat the case $\catC = \Ani$.
  The implication for the composition being obvious, we now assume that $f \circ g$ is  a monomorphism.
  In this case, note that the injectivity of $\htpygrp_{0}(f \circ g)$ implies that $\htpygrp_{0}(g)$ is injective.
  Given $m \geq 1$, then by assumption $\htpygrp_{m}(f \circ g)$ and $\htpygrp_{m}(f)$ are isomorphisms, hence so is $\htpygrp_{m}(g)$.
\end{proof}

\begin{lemma}
\label{lem:right-adjoint-faithful}
Let $\catC$ and $\catD$ be $\infty$-categories, and $G \from \catD \to \catC$ a functor with left adjoint $F$.
\begin{thmlist}
  \item The following are equivalent for objects $d, d' \in \catD$.
  \begin{thmlist}
    \item The induced map
      \[
          \begin{tikzcd}
            \map_{\catD}(d, d') \arrow[r, "{G}"] & \map_{\catC}(Gd, Gd')
          \end{tikzcd}
      \]
      is a monomorphism.
    \item The counit $\counit_{d} \from FGd \to d$ is an epimorphism with respect to $d'$.
  \end{thmlist}
  \item The following are equivalent:
    \begin{thmlist}
      \item $G$ is fully faithful.
      \item The counit $\counit \from F \circ G \to \id_{\catD}$ is an equivalence.
    \end{thmlist}
\end{thmlist}
\end{lemma}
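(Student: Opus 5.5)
The plan is a direct Yoneda-style argument using the adjunction equivalence
\[
\map_{\catD}(Fc, d') \simeq \map_{\catC}(c, Gd'),
\]
natural in $c \in \catC$ and $d' \in \catD$.

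For part (1), fix $d, d'$. I will show that the map $G \colon \map_{\catD}(d,d') \to \map_{\catC}(Gd, Gd')$ is identified, under the adjunction equivalence with $c = Gd$, with precomposition by the counit
\[
\counit_d^* \colon \map_{\catD}(d, d') \longrightarrow \map_{\catD}(FGd, d').
\]
Concretely, the adjunction equivalence $\map_{\catC}(Gd, Gd') \isomto \map_{\catD}(FGd, d')$ sends $u$ to $\counit_{d'} \circ F(u)$. Hence the composite sends $g \colon d \to d'$ to $\counit_{d'} \circ FG(g)$. By naturality of $\counit$, this equals $g \circ \counit_d$.

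The only point needing care is that this identification holds as a homotopy-commutative triangle of anima, not merely on $\pi_0$. I would obtain it from the naturality of the counit transformation $\counit \colon FG \Rightarrow \id_{\catD}$, viewed as a map of functors $\map_{\catD}(\blank, d') \circ \id \to \map_{\catD}(FG\blank, d')$, combined with the description of the adjunction equivalence via the counit (as in \HTT{}{5.2.2}). Once the triangle commutes, the composite differs from $G$ by an equivalence, so $G$ is a monomorphism if and only if $\counit_d^*$ is. The latter condition is exactly the definition of $\counit_d$ being an epimorphism with respect to $d'$ (\cref{rec:monomorphism-and-epimorphism}).

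For part (2), I will use the same identification, so that $G$ on mapping anima is an equivalence for all $d, d'$ if and only if $\counit_d^*$ is an equivalence for all $d'$. By the Yoneda lemma, this holds if and only if each $\counit_d$ is an equivalence, i.e.\ $\counit$ is an equivalence, because equivalences in functor categories are detected pointwise. Alternatively, part (2) can simply be cited as \HTT{}{5.2.7.4} together with its dual.

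The main obstacle is not conceptual. It is checking that the homotopy coherence of the identification of $G$ with $\counit_d^*$ holds at the level of mapping anima, which is needed to transfer the monomorphism property. I will handle this by citing the standard description of an adjunction through its counit in \cite{HTT}.
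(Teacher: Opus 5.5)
Your proposal is correct and follows the paper's proof: the paper likewise identifies $G$ on mapping anima with precomposition by $\counit_d$, using the commutative triangle through the adjunction equivalence $\map_{\catD}(FGd, d') \simeq \map_{\catC}(Gd, Gd')$, and deduces part (2) from the Yoneda lemma and the pointwise detection of equivalences of natural transformations. You also address the homotopy coherence of that triangle by appealing to the counit description of adjunctions in \cite{HTT}, a point the paper does not spell out.
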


\begin{proof}
Given any $d, d'$ in $\catD$, the adjunction supplies a commutative diagram
\[
  \begin{tikzcd}
    \map_{\catD}(d, d') \arrow[rd, "{\blank \circ \counit_{d}}"'] \arrow[rr, "{G}"] & & \map_{\catC}(Gd, Gd') \\
    & \map_{\catD}(FGd, d') \arrow[ru, "{\simeq}"'],
  \end{tikzcd}
\]
in which the right diagonal map is an equivalence.
From this we see:
\begin{thmlist}
  \item $G$ is a monomorphism if and only if $\blank \circ \counit_{d}$ is.
  \item $G$ is fully faithful if and only if $\blank \circ \counit_{d}$ is an equivalence for every $d, d'$.
        By varying $d'$, we see that this in particular means that $\counit_{d}$ is itself an equivalence by the Yoneda lemma.
        Since natural equivalences are detected pointwise, we conclude. \qedhere
\end{thmlist}
\end{proof}

\begin{lemma}
  \label{lem:epimorphisms-in-over-category}
  Let $\catC$ be an $\infty$-category with pushouts, $s \in \catC$ some object and $p \from \overcat{\catC}{s} \to \catC$ the forgetful functor.
  Then the following are equivalent for a map $f \from y \to x$ in $\overcat{\catC}{s}$.
  \begin{thmlist}
      \item $f$ is an epimorphism in $\overcat{\catC}{s}$.
      \item $p(f)$ is an epimorphism in $\catC$.
  \end{thmlist}
\end{lemma}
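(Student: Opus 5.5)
The plan is to reduce the statement to a comparison of mapping anima: epimorphisms are defined via precomposition being a monomorphism of mapping anima, and the mapping anima in the overcategory are fibres of mapping anima in $\catC$. Fix an object $z \from w \to s$ of $\overcat{\catC}{s}$; write $a \from x \to s$ and $b = a \circ f \from y \to s$ for the structure maps. There is a natural fibre sequence
\[
\map_{\overcat{\catC}{s}}(x, w) \longrightarrow \map_{\catC}(x, w) \xrightarrow{z \circ \blank} \map_{\catC}(x, s),
\]
with fibre taken over $a$, and similarly for $y$ over $b$. Precomposition with $f$ maps the first sequence to the second, compatibly with $f^* \from \map_{\catC}(x,s) \to \map_{\catC}(y,s)$, which sends $a$ to $b$.

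First, I prove (b) $\Rightarrow$ (a). If $f^*$ is a monomorphism on $\map_{\catC}(\blank, w)$, then taking fibres over $a$ and over its image $b$ preserves monomorphisms. Indeed, $\map_{\overcat{\catC}{s}}(\blank,w)$ is the pullback of $\map_{\catC}(\blank,w)$ along the point, and monomorphisms are stable under pullback by \cref{rem:monomorphisms}. More concretely, the induced map on fibres is the base change of $f^*$ on total spaces along $\fib_b \to \map_{\catC}(y,w)$, once we know that the square relating $\map_{\catC}(x,s) \to \map_{\catC}(y,s)$ is compatible. I would carefully write the fibre map as a pullback of the map of total spaces, using that $f^*$ on $\map_{\catC}(\blank,s)$ is itself a monomorphism (since $s$ is an object of $\catC$ and $p(f)$ is an epimorphism). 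With this, the map of fibres is a monomorphism, using \cref{lem:criterion-homotopy-pullback-fibres}-style diagram chasing. This direction does not need pushouts.

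Second, for (a) $\Rightarrow$ (b), I use the pushout hypothesis. Given $w \in \catC$, consider $w \sqcup s$ with its map to $s$ given by $(\text{?}, \id_s)$. There is no map $w \to s$ a priori, so instead I use the left adjoint $\catC \to \overcat{\catC}{s}$-type trick available for overcategories: $\map_{\catC}(p(x), w) \simeq \map_{\overcat{\catC}{s}}(x, \, w \times s)$ would require products. Therefore, with pushouts, I would rather use the dual description: an epimorphism is equivalently a map whose codiagonal $x \sqcup_y x \to x$ is an equivalence (dual of \cref{rem:monomorphisms}\ref{remitem:mono-via-diagonal}). The forgetful functor $p \from \overcat{\catC}{s} \to \catC$ preserves and reflects pushouts (colimits in overcategories are computed in $\catC$, \HTT{}{1.2.13.8}) and reflects equivalences. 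Hence $f$ is an epimorphism in $\overcat{\catC}{s}$ iff $x \sqcup_y x \to x$ is an equivalence in $\overcat{\catC}{s}$, iff it is one in $\catC$, iff $p(f)$ is an epimorphism. This codiagonal argument in fact gives both directions at once, so I would present it as the proof and drop the fibre-sequence argument.

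The main point to verify is the dual of \cref{rem:monomorphisms}\ref{remitem:mono-via-diagonal} in $\catC$ and in $\overcat{\catC}{s}$ (both have pushouts), and that $p$ creates pushouts. Neither is expected to be a real obstacle; the only subtlety is that the pushout $x\sqcup_y x$ in $\overcat{\catC}{s}$ is formed from the structure maps via the universal property in $\catC$.
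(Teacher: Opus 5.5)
Your final argument is the paper's proof. You characterise epimorphisms by the codiagonal $x \amalg_y x \to x$ being an equivalence (the dual of \cref{rem:monomorphisms}\ref{remitem:mono-via-diagonal}), and use that $p$ creates pushouts and is conservative to transfer this between $\overcat{\catC}{s}$ and $\catC$. The fibre-sequence argument for (b)$\Rightarrow$(a) that you discard is also sound, since $f^*$ on $\map_{\catC}(\blank, s)$ is a monomorphism. Your codiagonal $x \sqcup_y x \to x$ is the correct one; the paper's text swaps the roles of $x$ and $y$ there.
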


\begin{proof}
The map  $f$ is an epimorphism if and only if the codiagonal $\nabla_{\! f} \from y \amalg_{x} y \to y$ is an equivalence in $\overcat{\catC}{s}$ and similarly for $p(f)$ in $\catC$, see \Cref{rem:monomorphisms}.
  Since, by \kerodon{02KB}, the forgetful functor $p \from \overcat{\catC}{s} \to \catC$ creates colimits and, by combining \kerodon{019K} with \kerodon{0236}, is furthermore seen to be conservative, we conclude.
\end{proof}

%----------------------------------------------------------------------------------------------------------------------------------
\subsection{Classifying Anima} \label{subsec:BG}

\begin{recollection}
  \label{rec:BG} 
    \begin{thmlist}
    \item 
    The restriction of the fundamental group functor $\htpygrp_1 \colon \Ani_{\point} \to \Grp$ to the full subcategory $\Ani_{\point}^{\connected} \subset \Ani_{\point}$ of connected pointed anima admits a fully faithful right adjoint
    \[
       \B \from \Grp \linj \Ani_{\point}^{\connected}
    \]
    with essential image given by the pointed, connected and $1$-truncated anima, that carries a group $G$ to its \emph{classifying anima} $\B\!G = \K(G, 1)$, see \HTT{}{7.2.2.12}. 

    \item 
    By restricting the above adjunction to 
    $\pi_1 \colon \Ani_{\htpygrp, \point}^{\connected} \to \finGrp$, and $\B \colon  \finGrp \to \Ani_{\htpygrp, \point}^{\connected}$, 
    and extending along cofiltered limits, we see that also
    \[
      \htpygrp_{1} \from \Pro(\pifinAni)_{\point}^{\connected} \simeq \Pro(\Ani_{\htpygrp, \point}^{\connected}) \to \Pro(\finGrp)
    \]
    admits a fully faithful right adjoint
    \[
      \B \from \Pro(\finGrp) \linj \Pro(\pifinAni)_{\point}^{\connected} .
    \]
    The essential image of $\B$ is given by the pointed, connected, and $1$-truncated profinite anima.
    The \emph{profinite classifying anima} of a profinite group $G$ is the resulting pointed profinite anima~$\B\!G$. 
    In particular, for any connected and pointed profinite anima $(E, e)$, the functor $\B$ induces an isomorphism
    \[
       \Hom_{\Pro(\finGrp)}\big(\htpygrp_1(E, e), G\big) \isomto \map_{\point}\big((E,e), \B\!G\big),
    \]
    hence  $\map_{\point}((E,e), \B\!G)$ is discrete. 
    The functor $\B$ being fully faithful  implies that the counit of the adjunction
    \[
    \pi_1(\B\!G) \isomto G
    \]
    is an isomorphism. 
  \end{thmlist}
\end{recollection}

\begin{recollection}[anima with group action]
  \label{rec:anima-with-G-action}
  Let $G$ be a profinite group considered as a (discrete) group object in $\pfAni$. 
  \begin{thmlist}
  \item\label{recitem:symmetric-monoidal} The product $(E, E') \mapsto E \times E'$ determines a symmetric monoidal structure on $\pfAni$, that we denote by $\pfAni^{\times}$.
    \item With respect to this monoidal structure, $G$ is an \emph{associative algebra} in $\pfAni^{\times}$.
          We write $\pfAni(G) = \RMod_G(\pfAni^{\times})$ for the $\infty$-category of right $G$-modules in $\pfAni^{\times}$ and refer to it as the \emph{$\infty$-category of profinite anima with $G$-action}, see \cite[\SAGsubseclink{E.6.5}]{SAG} and \cite[\HAthmlink{4.2.1.13}, \HAthmlink{4.3.2.16}]{HA}.
       \item\label{recitem:adjunction} Let $\varphi: G \to H$ be a homomorphism between profinite groups. Precomposition with $\varphi$ induces the functor
          \[
            \varphi^{\ast} \from \pfAni(H) \longrightarrow \pfAni(G)
          \]
          that, by \HA{}{4.6.2.17}, admits a left adjoint $\varphi_{\ast}$ computed by the relative tensor product $\blank \otimes_G H$.
          Both constructions are compatible with composition, i.e., $(\psi \circ \varphi)^{\ast} \simeq \varphi^{\ast} \circ \psi^{\ast}$ and $(\psi \circ \varphi)_{\ast} \simeq \psi_{\ast} \circ \varphi_{\ast}$, see \HA{}{4.4.3.14}.

    \item\label{recitem:homotopy-quotient} In the case that $H = 1$, we have $\pfAni(1) = \pfAni$ and     
    write
      \[
          \blank \modmod G
          = \blank \otimes_G 1 \from \pfAni(G) \longrightarrow \pfAni .
      \]
      We refer to $E\modmod G$ as the \emph{homotopy quotient} of $E$ by $G$.   
  \end{thmlist}
\end{recollection}

\begin{recollection}[pointed versus unpointed]
  \label{rec:pointed-vs-unpointed}
 Let $(E, e)$ and $(B, b)$ be pointed anima with $B$ connected.
\begin{thmlist}
    \item 
    We have a fibre sequence
    \[
      \begin{tikzcd}
        \map_{\point}\big((E,e), (B,b)\big) \arrow[r] \arrow[d] \cartesian & \map(E, B) \arrow[d, "{\ev_e}"] \\
        \point \arrow[r, "b"'] & B,
      \end{tikzcd}
    \]
    where $\ev_e$ is evaluation at $e$ and  the upper horizontal map is the one induced by the forgetful functor $\Ani_{\point} \to \Ani$, see \HTT{}{5.5.5.12}.
        \item 
    Let $\Sup^1=\Delta^1/(0\sim 1)$ be the $1$-sphere in ${\Ani}_*$.  The anima $\LoopAni_bB = \map_{\point}(\Sup^1, (B,b))$ of \emph{loops in~$B$ based at~$b$} 
        is a group object in $\Ani$.
    Since $\pi_1(\Sup^{1}) = \bZ$, for any group $G$,  the fundamental group functor induces an equivalence
    \[
      \Omega_{\point}\!\B\!G = \map_{\point}(\Sup^1, \B\!G) \xrightarrow{\htpygrp_1} \Hom(\bZ, G) = G,
    \]
    where $G \in \sets \subset \Ani$ is discrete.
    \item \label{homquot}
    Let $G$ be a group with classifying anima  $\B\!G$.
    There is an adjoint equivalence of $\infty$-categories
    \[
      \begin{tikzcd}
        \Ani(G) \arrow[r, shift left = 0.2em, "{\blank \modmod G}"] \arrow[r, shift right = 0.2 em, leftarrow, "{\fib_{*}}"']& \overcat{\Ani}{\B\!G},
      \end{tikzcd}
    \]
    where $\Ani(G) = \RMod_{G}(\Ani^{\times})$ denotes the $\infty$-category of anima equipped with an action by the group $G$, and $\blank\modmod G = (\blank \otimes_G \point)$ denotes the homotopy quotient.
    The functor $\fib_*$ carries a map $\varphi \from E \to \B\!G$ to its fibre $\fib_*(\varphi)$ equipped with the natural action of $G$ via the identification $G=\Omega_*\!\B\!G$, see \cite[\HAthmlink{5.2.6.28}, \HAthmlink{5.2.6.29}]{HA}. 
    
    \item
    We can identify the action of $G=\LoopAni_{\point}\!\B\!G$ on the discrete anima $\map_{\point}((E,e), \B\!G)$ with the conjugation action of~$G$ on the set $\Hom(\htpygrp_1(E, e), G)$. 
  \end{thmlist}
\end{recollection}

Let $G = \prolimit_{\alpha} G_{\alpha}$ be a profinite group. We denote by  $\vertbr{\B\!G} = \B\vertbr{G}$ the underlying limit of $\B\!G$ in anima, i.e., the classifying anima of the underlying discrete group $\vertbr{G} = \limit_{\alpha} G_{\alpha} \in \Grp$ of $G$.
We have
\[
\vertbr{\B\!G}= \lim_\alpha \B\!G_\alpha= \map(*,\B\!G).
\]

\begin{proposition}
  \label{lem:concrete-description-of-mapping-anima-into-BG}
  Let $G = \prolimit_{\alpha} G_{\alpha}$ be a profinite group and let  $(E,e) = \prolimit_{\beta} (E_{\beta},e_\beta)$ be a connected pointed profinite anima. Then we have an equivalence of fibre sequences in $\Ani$:
    \[
      \begin{tikzcd}
        \map_{\point}\big((E,e), \B\!G\big) \arrow[d] \arrow[r,"{\htpygrp_1}", "{\sim}"'] & \Hom(\htpygrp_1(E, e), G) \arrow[d] \\
        \map(E, \B\!G) \arrow[d, "{\ev_{e}}"'] \arrow[r, "{\htpygrp_1\!\modmod G}", "{\sim}"'] & \displaystyle\bigsqcup_{\sqrbr{\varphi} \in \Hom(\htpygrp_1(E, e), G)_{G}} \hspace{-1.1cm} \B\vertbr{\Stab_G(\varphi)} \arrow[d] \\
        \B\vertbr{G} \arrow[r, equals] & \B\vertbr{G}.
      \end{tikzcd}
    \]
\end{proposition}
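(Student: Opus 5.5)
The plan is to identify both sides of each row as the fibre over a base point of a map to $\B\vertbr{G}$, using the fibre sequence of \cref{rec:pointed-vs-unpointed} for the left column. The top horizontal equivalence is already recorded in \cref{rec:BG}. There we showed that $\map_{\point}((E,e),\B\!G)$ is discrete and that $\htpygrp_1$ identifies it with $\Hom(\htpygrp_1(E,e),G)$. This bijection is equivariant for the action of $\vertbr{G} = \LoopAni_{\point}\B\!G$: on the left the action comes from the fibre sequence, and on the right it is conjugation (the last item of \cref{rec:pointed-vs-unpointed}, transported to the profinite setting levelwise). The left column is then the fibre sequence $\map_{\point}((E,e),\B\!G) \to \map(E,\B\!G) \xrightarrow{\ev_e} \map(\point,\B\!G) = \vertbr{\B\!G} = \B\vertbr{G}$. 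This is the unpointed–pointed fibre sequence, obtained by mapping the cofibre sequence $\point \to E$ into $\B\!G$ and using that $\map(\blank,\B\!G)$ turns colimits in $\pfAni$ into limits.

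Next I would construct the middle horizontal map. Since $\map(E,\B\!G)$ is an anima over the connected anima $\B\vertbr{G}$, the equivalence $\Ani(\vertbr{G}) \simeq \overcat{\Ani}{\B\vertbr{G}}$ of \cref{rec:pointed-vs-unpointed}~\ref{homquot} shows that $\map(E,\B\!G) \simeq \fib_{\point}(\ev_e)\modmod \vertbr{G}$. By the first step, this fibre is the discrete $\vertbr{G}$-set $\Hom(\htpygrp_1(E,e),G)$ with conjugation action. The homotopy quotient of a discrete $G$-set $M$ decomposes over orbits as $\bigsqcup_{[\varphi]\in M_G} (G/\Stab_G(\varphi))\modmod G \simeq \bigsqcup \B\vertbr{\Stab_G(\varphi)}$, which gives the middle term. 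The compatibility of the bottom square then follows from the construction, because the projection of $M\modmod G$ to $\point\modmod G = \B\vertbr{G}$ is induced by $M \to \point$. This argument also fixes the map called $\htpygrp_1\modmod G$.

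The main obstacle is the first step in the honest $\infty$-categorical sense. The group $\LoopAni \vertbr{\B\!G}$ must act on the fibre compatibly with conjugation, and $\map(\point,\B\!G)$ must really be $\B\vertbr{G}$, that is, $\lim_\alpha \B G_\alpha \simeq \B(\lim_\alpha G_\alpha)$. I would verify the latter on homotopy groups. Each $\B G_\alpha$ is $1$-truncated and connected, and the transition maps are surjective on $\pi_1$, so the Milnor $\lim^1$ term vanishes for $\pi_0$ (Mittag-Leffler). This gives a connected limit with $\pi_1 = \lim G_\alpha$ and no higher homotopy. For the action I would work levelwise. I would write $\map(E,\B\!G) = \lim_\alpha \colim_\beta \map(E_\beta, \B G_\alpha)$ and first prove the statement for finite $G_\alpha$ and $\pi$-finite $E_\beta$, where the classical statement holds. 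I would then pass to the filtered colimit over $\beta$ and the cofiltered limit over $\alpha$, checking that both the orbit decomposition and the stabilizers commute with these operations.
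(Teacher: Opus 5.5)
Your proposal is correct and uses the same ingredients as the paper: the pointed/unpointed fibre sequence, the orbit decomposition $M\modmod G\simeq\bigsqcup_{[m]}\B\Stab_G(m)$, and a levelwise identification of the action on the fibre with conjugation. What differs is where you apply the straightening equivalence.

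\begin{itemize}
\item The paper works at each finite level $G_\alpha$ and obtains compatible fibre sequences over $\B G_\alpha$. It passes to the limit over $\alpha$. It then concludes that the middle map is an equivalence because it is a map of fibre sequences over the connected base $\B\vertbr{G}$ that is an equivalence on fibres.
\item You straighten once, over $\B\vertbr{G}$, for the discrete group $\vertbr{G}$. Once the fibre of $\ev_e$ is known to be the discrete $\vertbr{G}$-set $\Hom(\htpygrp_1(E,e),G)$ with conjugation action, the equivalence $\Ani(\vertbr{G})\simeq\overcat{\Ani}{\B\vertbr{G}}$ and the orbit decomposition give the middle row directly.
\end{itemize}

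Your route makes automatic the identification of $\lim_\alpha\bigsqcup_{[\varphi]}\B\Stab_{G_\alpha}(\varphi)$ with $\bigsqcup_{[\varphi]}\B\vertbr{\Stab_G(\varphi)}$, which the paper's ``passing to the limit'' leaves implicit. In exchange, you still have to go levelwise to pin down the action. So in the end both arguments carry out the same verifications.

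Two of your justifications need adjusting.

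\begin{itemize}
\item The Milnor sequence and Mittag-Leffler control $\lim^1$ only for countable towers, whereas the index category of $G$ is an arbitrary cofiltered one. The correct reason that $\lim_\alpha\B G_\alpha$ is connected is compactness. A point of this limit is a compatible system of $G_\alpha$-torsors. Their sets of trivialisations form a cofiltered system of nonempty finite sets, which therefore has nonempty limit. Your description of $\pi_1$ and the vanishing of the higher homotopy groups is then fine.
\item In the levelwise step the $E_\beta$ need not be connected, so the classical statement does not apply to them as they stand. Since $E$ is connected, you can first replace each $E_\beta$ by the component of $e_\beta$ without changing the pro-object.
\end{itemize}
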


\begin{proof} Given a group $G$ with subgroup $H$, we have a natural identification
\[
(G/H)\modmod G =\ \point\!\modmod H = \B\!H.
\]
Let $M$ be a $G$-set, and fix a set of representatives $m \in [m] \in M/G$. Then the isomorphism of $G$-sets
\[
\bigsqcup_{\sqrbr{m} \in M/G} G/\Stab_G(m) \isomto M
    \]
induces on homotopy quotients mod $G$ an equivalence
\[
      \bigsqcup_{\sqrbr{m} \in M/G} \B\!\Stab_G(m) \isomto M\!\modmod G
\]
    of anima over $\B\!G$. 
    In particular, we have isomorphisms
    \[
      \Hom(\pi_1(E,e), G) \modmod G \isomfrom \hspace{-.7cm}\bigsqcup_{\sqrbr{\varphi} \in \Hom(\htpygrp_1(E, e), G)_G} \hspace{-1cm} \B\!\Stab_G(\varphi).
    \]
Applying the above to the groups $G_\alpha$ occurring in $G = \prolimit_{\alpha} G_{\alpha}$,  we obtain fibre sequences
\[
      \begin{tikzcd}
        \Hom(\htpygrp_1(E, e), G_\alpha) \arrow[r] \arrow[d] \cartesian & \bigsqcup_{\sqrbr{\varphi} \in \Hom(\htpygrp_1(E, e), G_\alpha)_{G_\alpha}} \B\!\Stab_{G_\alpha}(\varphi) \arrow[d] \\
        \point \arrow[r] & \B\!G_\alpha.
      \end{tikzcd}
    \]
Note that the action of $\vertbr{G} = \limit_{\alpha} G_{\alpha}$ on $\Hom(\htpygrp_1(E, e), G) = \limit_{\alpha} \Hom(\htpygrp_1(E, e), G_{\alpha})$ is constructed levelwise. By passing to the limit over $\alpha$ of the above fibre sequence, we therefore obtain the right vertical fibre sequence asserted in the lemma. The left vertical fibre sequence is obtained in the same fashion from the fibre sequences 
\[
      \begin{tikzcd}
        \map_{\point}\big((E,e), \B\!G_\alpha\big) \arrow[r] \arrow[d] \cartesian 
        & \map(E, \B\!G_\alpha) \arrow[d] \\
        \point \arrow[r] & \B\!G_\alpha.
      \end{tikzcd}
    \]
Both compare as claimed. Since the map $\pi_1$ of fibres is an equivalence and the same hold by trivial reasons for the identity of $\B\vertbr{G}$, also the middle map $\pi_1\modmod G$ is an equivalence. 
\end{proof}

Next, we collect some consequences.

\begin{corollary}
    \label{cor:surjection-of-groups-induces-epimorphism-of-groupoids}
    Let $\rho \from G \lsurj H$ be a surjective homomorphism between profinite groups.
    Then the induced map $\B{\!\rho} \from \B{\!G} \to \B{\!H}$ is an epimorphism in $\SigmapfAni[\pi, \leq 1]$.
\end{corollary}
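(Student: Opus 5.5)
Following the definition in \cref{rec:monomorphism-and-epimorphism}, we must show that for every $\Gamma \in \Pro(\Ani_{\pi,\le 1})$ the precomposition map
\[
	\B\rho^{*} \colon \map(\B\!H, \Gamma) \longrightarrow \map(\B\!G, \Gamma)
\]
is a monomorphism of anima. The plan has two stages. First reduce to $\Gamma$ a $\pi$-finite $1$-truncated anima. Then compute both mapping anima explicitly with \cref{lem:concrete-description-of-mapping-anima-into-BG} and compare them.

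\textbf{Reduction to $\pi$-finite targets.} Write $\Gamma = \prolimit_j \Gamma_j$ with $\Gamma_j \in \Ani_{\pi,\le 1}$. Mapping anima into a cofiltered limit are the corresponding limits, and monomorphisms are stable under limits by \cref{rem:monomorphisms}~\ref{remitem:monos-are-stable-under-limits}. So we may assume $\Gamma \in \Ani_{\pi,\le 1}$. Such a $\Gamma$ is a finite disjoint union of $\B\!K$'s with $K$ finite. Since $\B\!G$ and $\B\!H$ are connected, both mapping anima split as the same finite coproduct over the components of $\Gamma$. We may therefore take $\Gamma = \B\!K$ with $K$ a finite group.

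\textbf{Comparison via the explicit description.} By \cref{lem:concrete-description-of-mapping-anima-into-BG}, we have
\[
	\map(\B\!H,\B\!K) \simeq \bigsqcup_{[\psi]\in \Hom(H,K)/K} \B\Stab_K(\psi),
\]
and similarly for $G$. The map $\B\rho^{*}$ is induced by $\psi \mapsto \psi\circ\rho$, and this is compatible with the fibre sequences over $\B\!K$ and the evaluation maps. We then check the two conditions for a monomorphism.
\begin{itemize}
\item On $\pi_0$: the map $\Hom(H,K)/K \to \Hom(G,K)/K$ is injective. Since $\rho$ is surjective, $\psi\mapsto\psi\circ\rho$ is injective. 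If $\psi\circ\rho = k(\psi'\circ\rho)k^{-1} = (k\psi' k^{-1})\circ\rho$, then surjectivity gives $\psi = k\psi' k^{-1}$.
\item On $\pi_1$: the map $\Stab_K(\psi) \to \Stab_K(\psi\circ\rho)$ is the inclusion of centralisers $C_K(\psi(H)) \subseteq C_K(\psi\rho(G))$. Since $\psi\rho(G) = \psi(H)$, this is an equality.
\end{itemize}
There are no higher homotopy groups, so $\B\rho^{*}$ is a monomorphism.

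A slicker alternative is to use the fibre sequence criterion directly. The map of fibres over $\B\!K$ is $\Hom(H,K) \hookrightarrow \Hom(G,K)$, an injection of discrete sets, hence a monomorphism. A map of fibrations over a connected base whose fibre map is a monomorphism is itself a monomorphism, as one sees from the long exact sequences.

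The main point needing care is the naturality of the identification in \cref{lem:concrete-description-of-mapping-anima-into-BG} with respect to precomposition by $\B\rho$, especially the compatibility of the evaluation maps to $\B\vertbr{K}$. This follows because $\B\rho$ is a pointed map and $\pi_1$ is functorial.
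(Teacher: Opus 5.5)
Your proposal is correct and follows the paper's own proof essentially step by step. Both arguments reduce via limits to $\Gamma \in \Ani_{\pi,\le 1}$, split over components to reach $\Gamma = \B\!K$ with $K$ finite, and conclude from \cref{lem:concrete-description-of-mapping-anima-into-BG} using injectivity of $\rho^*$ on conjugacy classes and the equality $\Stab_K(\psi) = \Stab_K(\psi\circ\rho)$. Your explicit naturality remark, and the fibre-sequence variant, spell out a compatibility the paper leaves implicit.
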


\begin{proof}
 We have to show that, for any profinite groupoid  $\Gamma \in \SigmapfAni[\pi, \leq 1]$, the map
    \[
        \blank \circ \B\!{\rho} \colon \map(\B{\!H}, \Gamma) \longrightarrow  \map(\B{\!G}, \Gamma)
    \]
    is a monomorphism of anima.
    We can write $\Gamma = \prolimit_{\alpha} \Gamma_{\alpha}$ for a suitable choice of $\Gamma_{\alpha} \in \SigmaAni[\pi, \leq 1]$.
    Since, by \Cref{rem:monomorphisms}  \labelcref{remitem:monos-are-stable-under-limits} monomorphisms are stable under filtered colimits, we reduce to $\Gamma \in \SigmaAni[\pi, \leq 1]$.
    Now $\Gamma$ has finitely many connected components.
    As both $\B\!G$ as well as $\B\!H$ are connected, the mapping anima decompose accordingly.
    Using that monomorphisms of anima are stable under finite coproducts, we may reduce to the case that $\Gamma = \B\!K$ for a finite group~$K$.
    In this case, the claim immediately follows from \Cref{lem:concrete-description-of-mapping-anima-into-BG} in the case of a finite group by noting that the surjectivity of $\rho$ implies that
    \[
      \rho^{*} \from \Hom(H, K)_K \linj \Hom(G, K)_{K} 
    \]
    is injective and $\Stab_K(\varphi) = \Stab_K(\varphi \circ \rho)$ for any $\varphi \from H \to K$.
  \end{proof}

\begin{corollary}
  \label{cor:htpy-classes-of-maps-over-BG}
Let $\rho \from G \to H$ be a surjective homomorphism between profinite groups with kernel $N$, and let $(E,e$) be a pointed connected profinite anima over $\B\!H$.
Then we have an equivalence of fibre sequences in $\Ani$
    \[
      \begin{tikzcd}[row sep=2ex]
        \map_{\B\!H}(E, \B\!G) \arrow[d] \arrow[r, "\pi_1\modmod N", "{\sim}"'] & \displaystyle\bigsqcup_{\sqrbr{\varphi}_N \in \Hom_H(\htpygrp_1(E, e), G)_{N}} \hspace{-1.1cm} \B\vertbr{\Stab_N(\varphi)} \arrow[d] \\
        \map(E, \B\!G) \arrow[d, "\rho_*"'] \arrow[r, "{\htpygrp_1\!\modmod G}", "{\sim}"'] & \displaystyle\bigsqcup_{\sqrbr{\varphi} \in \Hom(\htpygrp_1(E, e), G)_{G}} \hspace{-1.1cm} \B\vertbr{\Stab_G(\varphi)} \arrow[d] \\
        \map(E, \B\!H) \arrow[r, "{\htpygrp_1\!\modmod H}", "{\sim}"'] & \displaystyle\bigsqcup_{\sqrbr{\psi} \in \Hom(\htpygrp_1(E, e), H)_{H}} \hspace{-1.1cm} \B\vertbr{\Stab_H(\psi)} ,
      \end{tikzcd}
    \]
    where the fibres are taken with respect to the structure map of $E$ to $\B\!H$ and its image under $\pi_1 \modmod H$, respectively. 
  In particular, $\pi_1\modmod N$ induces a bijection of sets of connected components
  \[
    \htpycls[E][\B\!G][\B\!H] \isomto \Hom_H(\htpygrp_1(E, e), G)_{N}.
  \]
\end{corollary}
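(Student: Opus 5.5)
The plan is to deduce the statement from \Cref{lem:concrete-description-of-mapping-anima-into-BG} by taking fibres over $\B\!H$. First I write the structure map $E \to \B\!H$ as a point $\psi_0$ of $\map(E,\B\!H)$. Under $\pi_1\modmod H$ this point corresponds to the pointed map, i.e.\ a homomorphism $\psi_0 \colon \htpygrp_1(E,e) \to H$, which picks out the component $\B\vertbr{\Stab_H(\psi_0)}$. The mapping anima $\map_{\B\!H}(E,\B\!G)$ is by definition the fibre of $\rho_* \colon \map(E,\B\!G) \to \map(E,\B\!H)$ over $\psi_0$. So the left column is a fibre sequence, and the bottom two horizontal equivalences are supplied by \Cref{lem:concrete-description-of-mapping-anima-into-BG}, applied to $G$ and to $H$.

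Next I need to check that the lower square commutes. That is, the map of homotopy quotients induced by the $\rho$-equivariant map $\Hom(\htpygrp_1(E,e),G) \to \Hom(\htpygrp_1(E,e),H)$, $\varphi \mapsto \rho\circ\varphi$, corresponds to $\rho_*$. This comes from naturality of the construction in the proof of \Cref{lem:concrete-description-of-mapping-anima-into-BG}: compare the fibre sequences over $\B\!G_\alpha$ and $\B\!H_\beta$ levelwise and pass to the limit. With commutativity in hand, the fibre of $\rho_*$ over $\psi_0$ is equivalent to the fibre of
\[
\Hom(\htpygrp_1(E,e),G)\modmod G \longrightarrow \Hom(\htpygrp_1(E,e),H)\modmod H
\]
over the point $\psi_0$.

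The main step, and where I expect the obstacle, is computing this fibre. I use the general fact that for a $\rho$-equivariant map $M \to M'$ with $G \to H$ surjective with kernel $N$, the fibre of $M\modmod G \to M'\modmod H$ over $m' \in M'$ is $M_{m'}\modmod N$. Here $M_{m'}$ is the set-theoretic fibre, which is $N$-stable. To justify this I identify $M'\modmod H \simeq M'\modmod G$ restricted along $\B\!G\to\B\!H$ and compose fibre sequences: $\fib(\B\!G \to \B\!H) = \B\!N$, and pasting via \Cref{cor:cancellation of pullbacks} gives fibre $M\times_{M'}\{m'\}$ taken modulo $N$. Applying this with $M_{\psi_0} = \Hom_H(\htpygrp_1(E,e),G)$, the set of lifts $\varphi$ with $\rho\circ\varphi=\psi_0$, and decomposing the $N$-set into orbits as in the earlier proof, I get
\[
\bigsqcup_{[\varphi]_N} \B\vertbr{\Stab_N(\varphi)}.
\]
In the profinite setting this must be done levelwise for finite quotients $G_\alpha \to H_\alpha$ with kernels $N_\alpha$, which are cofinal in $N$, and then passed to the limit using that limits commute with fibres. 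The care needed lies in these limit and cofinality bookkeeping steps and in verifying that the resulting map is compatible with $\pi_1$.

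The final claim follows by applying $\pi_0$. Each $\B\vertbr{\Stab_N(\varphi)}$ is connected, so the set of components is $\Hom_H(\htpygrp_1(E,e),G)_N$, the set of $N$-orbits.
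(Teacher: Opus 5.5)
Your proposal is correct and follows essentially the paper's route: the two lower horizontal equivalences come from \Cref{lem:concrete-description-of-mapping-anima-into-BG}, the left column is a fibre sequence by the overcategory mapping-anima formula (the dual of HTT 5.5.5.12), the profinite case is reduced to finite quotients, and what remains is to identify the fibre of the right column over the component $\B\vertbr{\Stab_H(\psi_0)}$ of the structure map. The only local difference is that the paper reads this last fibre sequence off the long exact sequence of homotopy groups, whereas you compute it by base change along $\B\!G\to\B\!H$ (whose fibre is $\B\!N$) and pasting of pullbacks; you also spell out the commutativity of the lower square, which the paper leaves implicit.
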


\begin{proof} Since a surjective homomorphism between profinite groups can be represented by an inverse system of surjections between finite groups, we can assume that $G$ and $H$ are finite. The horizontal maps  $\htpygrp_1\!\modmod G$ and $\htpygrp_1\!\modmod H$ are equivalences by \Cref{lem:concrete-description-of-mapping-anima-into-BG}. Hence, it remains to check that the vertical sequences are fibre sequences.
The sequence on the left hand side is a fibre sequence by the dual of \HTT{}{5.5.5.12}.
On the right hand side, denoting the structure map of $E$ by $\eta \colon E\to \B\!H$, we have to check  that
  \[
    \begin{tikzcd}
      \bigsqcup_{\sqrbr{\varphi}_N \in \Hom_{H}(\htpygrp_1(E, e), G)_N} \B\!\Stab_N(\varphi) \arrow[r] \arrow[d] & \bigsqcup_{\sqrbr{\varphi}_{G} \in \Hom(\htpygrp_1(E, e), G)_{G}} \B\!\Stab_{G}(\varphi) \arrow[d, "{\rho \circ \blank}"] \\
      \point \arrow[r, "\eta"] & \bigsqcup_{\sqrbr{\psi}_{H} \in \Hom(\htpygrp_1(E, e), H)_{H}} \B\!\Stab_{H}(\psi) 
    \end{tikzcd}
  \]
  is a fibre sequence.
  Since fibres only depend on the connected component of the base, we therefore need to check that
  \[
    \begin{tikzcd}
      \bigsqcup_{\sqrbr{\varphi}_N \in \Hom_{H}(\htpygrp_1(E, e), G)_N} \B\!\Stab_N(\varphi) \arrow[r] \arrow[d] & \bigsqcup_{\sqrbr{\varphi}_{G}, \sqrbr{\rho \circ \varphi}_{H} = \sqrbr{\eta}_{H}} \B\!\Stab_{G}(\varphi) \arrow[d, "{\rho \circ \blank}"] \\
      \point \arrow[r,"\eta"] & \B\!\Stab_{H}(\eta) 
    \end{tikzcd}
  \]
  is a fibre sequence. This can be read off from the long exact sequence of homotopy groups.
\end{proof}

Recall that a profinite group $N$ is \emph{strongly centre-free}  if the centre of every open subgroup of $N$ is trivial.

\begin{corollary}\label{cor:HomAnimadiscrete}
Let $\rho \from G \to H$ be a surjective homomorphism between profinite groups with ker\-nel~$N$, and let $(E,e$) be a pointed connected profinite anima over $\B\!H$.
If $N$ is strongly centre-free, then the subanima
\[
\map_{\B\!H}(E, \B\!G)^{\pioneop} \subset \map_{\B\!H}(E, \B\!G)
\]
of $\pi_1$-open maps is discrete.
\end{corollary}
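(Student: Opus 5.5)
We read off the homotopy type of $\map_{\B\!H}(E, \B\!G)$ from \Cref{cor:htpy-classes-of-maps-over-BG}. It gives an equivalence
\[
	\map_{\B\!H}(E, \B\!G) \simeq \bigsqcup_{\sqrbr{\varphi}_N \in \Hom_H(\htpygrp_1(E, e), G)_{N}} \B\vertbr{\Stab_N(\varphi)},
\]
where $\Stab_N(\varphi)$ is the stabiliser of $\varphi$ under conjugation by $N$. Since $\map^{\pioneop}_{\B\!H}(E, \B\!G)$ is, by definition, the union of those connected components whose points are $\pi_1$-open maps, it suffices to show the following: for every $\varphi$ with open image, the group $\Stab_N(\varphi)$ is trivial. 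Then each such component is $\B$ of the trivial group, hence contractible, and the subanima is discrete. The remaining step is to identify which components are $\pi_1$-open. Under the identification of \cref{rec:BG}, the map $E \to \B\!G$ corresponding to $\varphi$ induces $\varphi$ itself on $\pi_1$ at the base point $e$. Because $E$ is connected, openness at one point is equivalent to openness at every point, since changing the base point conjugates the homomorphism. So the component of $\varphi$ is $\pi_1$-open exactly when $\varphi(\htpygrp_1(E,e))$ is open in $G$.

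The key group-theoretic step is as follows. Let $\varphi\colon \htpygrp_1(E,e) \to G$ have open image $U = \varphi(\htpygrp_1(E,e)) \subseteq G$. An element $n \in N$ fixes $\varphi$ under conjugation precisely when $n$ centralises $U$. So $\Stab_N(\varphi) = N \cap Z_G(U)$. Now $U \cap N$ is an open subgroup of $N$, because $U$ is open in $G$. Any $n \in N \cap Z_G(U)$ centralises $U\cap N$, and we want to conclude $n=1$. This does not follow immediately, since $n$ need not lie in $U \cap N$. We fix this by replacing $U\cap N$ with the open subgroup $V = \langle U \cap N, n\rangle$ of $N$. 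This subgroup is open because it contains an open subgroup, and $n$ commutes with all of $U\cap N$ and with itself. Hence $n$ is central in $V$. Strong centre-freeness of $N$ then forces $n = 1$. Therefore $\Stab_N(\varphi) = 1$.

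Finally, $\B\vertbr{1}$ is contractible. So $\map^{\pioneop}_{\B\!H}(E,\B\!G)$ is equivalent to the discrete set of $N$-orbits of $\pi_1$-open $\varphi \in \Hom_H(\htpygrp_1(E,e),G)$. The main point requiring care is the identification of the components of $\map_{\B\!H}(E,\B\!G)$ under \cref{cor:htpy-classes-of-maps-over-BG} with $N$-conjugacy classes in such a way that $\pi_1$-openness corresponds to openness of the image of $\varphi$. I expect this to follow directly from the construction via $\htpygrp_1$ in \cref{lem:concrete-description-of-mapping-anima-into-BG}.
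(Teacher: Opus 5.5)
Your proof is correct and follows essentially the same route as the paper. You identify $\map_{\B\!H}(E,\B\!G)$ via \cref{cor:htpy-classes-of-maps-over-BG} with $\bigsqcup \B\vertbr{\Stab_N(\varphi)}$, and you kill $\Stab_N(\varphi)$ for open $\varphi$ by noting that $n$ is central in the open subgroup $\langle U\cap N, n\rangle$ of $N$. Your extra remarks, that this subgroup is open and that $\pi_1$-openness does not depend on the base point, only make explicit what the paper leaves implicit.
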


\begin{proof} The  subanima $\map_{\B\!H}(E, \B\!G)^{\pioneop}$  corresponds under the equivalence $\pi_1\modmod N$ of \Cref{cor:htpy-classes-of-maps-over-BG} to the disjoint union of components indexed by open (conjugacy classes of) homomorphisms $\sqrbr{\varphi}_N$ in $\Hom_H(\htpygrp_1(E, e), G)_{N}$.
Therefore we have to show that $\Stab_N(\varphi)$ is the trivial group if $\varphi \in \Hom_H(\htpygrp_1(E, e), G)$ is open. Indeed, let $U=\mathrm{im}(\varphi)\subset G$ be the open image of $\varphi$ and $n\in \Stab_N(\varphi)$. Then $nun^{-1}=u$ for all $u\in U$. Hence $n$ lies in the centre of the open subgroup of $N$ generated by $n$ and $U\cap N$, but this centre is trivial by assumption.
\end{proof}

%----------------------------------------------------------------------------------------------------------------------------------
\subsection{Simplicial Objects and Resolutions}
\label{subsec:simplicial-objects-and-resolutions}

\begin{recollection}
  \label{rec:localisations}
  Let $L \from \catC \to \catD$ be a functor of $\infty$-categories.
  \begin{thmlist}
    \item The functor $L$ is a \emph{reflective localisation} if it admits a fully faithful right-adjoint $R \from \catD \to \catC$.
    \item If $\catC$ admits pullbacks, we say that the reflective localisation $L$ is \emph{locally cartesian} if for any cospan $d' \to d \ot c$ with $d', d \in \catD$ the canonical map
    \[
      L(d' \times_d c) \longrightarrow d' \times_{d} L(c)
    \]
    is an equivalence.
    See \cite[\S 1.2]{gepner2017} and \cite[\S 3.2]{hoyois2017a}.
  \end{thmlist}
\end{recollection}

Recall that an $\infty$-category $\catC$ is \emph{weakly contractible} if $\catC \to \simplex^{\!0}$ is a weak homotopy equivalence of simplicial sets, see \kerodon{04GW}.

\begin{proposition}
  \label{prop:weakly-contractible-implies-fully-faithful-into-functor-category}
  Let $\catI$ be a weakly contractible $\infty$-category (e.g., $\catI = \simplex^{\op}$) and $\catC$ an $\infty$-category.
  \begin{enumerate}[(1)]
  \item\label{propitem:constant-functor-ff} 
  The constant functor
  \[
    \underline{(\blank)} \from \catC \longrightarrow \Fun(\catI, \catC)
  \]
  is fully faithful.
  \item \label{propitem:colimit-reflective-localisation}
  Assume that $\catC$ admits $\catI$-shaped colimits.
  \begin{enumerate}[(a)]
    \item \label{propsubitem:reflective}
    The functor $\colimit_{\catI} \from \Fun(\catI, \catC) \to \catC$ is a reflective localisation.
    \item \label{propsubitem:locally-cartesian}
    The reflective localisation $\colimit_{\catI} \from \Fun(\catI, \catC) \to \catC$ is locally cartesian if and only if $\catI$-shaped colimits are universal in $\catC$.
  \end{enumerate}
  \end{enumerate}
\end{proposition}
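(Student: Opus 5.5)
We argue separately for the two parts. For \ref{propitem:constant-functor-ff}, take objects $c, c' \in \catC$. The mapping anima between constant diagrams is computed as an end, or equivalently via the right Kan extension along $\catI \to \simplex^{\!0}$:
\[
\map_{\Fun(\catI,\catC)}(\underline{c}, \underline{c'}) \simeq \limit_{\catI^{\op}} \map_{\catC}(c,c') ,
\]
which is the limit of a constant $\catI^{\op}$-shaped diagram of anima with value $\map_{\catC}(c,c')$. For a constant diagram with value an anima $M$, this limit is the cotensor $\map(\catI^{\op}, M) = \map(|\catI|, M)$, where $|\catI|$ is the geometric realisation (groupoidification) of $\catI$. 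Since $\catI$ is weakly contractible, $|\catI| \simeq \point$, so the limit is $M$. Checking that the induced map is the one coming from $\underline{(\blank)}$ is routine: it corresponds to the diagonal $M \to \map(|\catI|,M)$. This gives fully faithfulness.

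For \ref{propsubitem:reflective}, note that $\colimit_{\catI}$ is left adjoint to the constant functor $\underline{(\blank)}$ whenever $\catI$-shaped colimits exist. By \ref{propitem:constant-functor-ff} this right adjoint is fully faithful, so $\colimit_{\catI}$ is a reflective localisation in the sense of \Cref{rec:localisations}.

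For \ref{propsubitem:locally-cartesian}, we unwind the definition. Objects of the localised category are the constant diagrams $\underline{d}$. Pullbacks in $\Fun(\catI,\catC)$ are computed pointwise. Consider a cospan $\underline{d'} \to \underline{d} \leftarrow F$ with $F\colon \catI \to \catC$; by fully faithfulness the first arrow is $\underline{g}$ for a map $g\colon d'\to d$. The condition to be checked then reads
\[
\colimit_{i} \big(d' \times_d F(i)\big) \isomto d' \times_d \colimit_i F(i),
\]
where $d$ receives maps from the $F(i)$ through the adjoint map $\colimit F \to d$. By definition, universality of $\catI$-colimits means exactly this: for every diagram $F \to \underline{d}$, i.e.\ for every map $\colimit F \to d$, and every $d' \to d$, base change along $d'\to d$ commutes with the colimit. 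So the two conditions coincide, and both implications follow directly.

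The main obstacle is being careful in the end computation for \ref{propitem:constant-functor-ff}. There I would use the twisted arrow description of mapping anima in functor categories together with the fact that $\Tw(\catI) \to \catI^{\op}\times\catI$ has weakly contractible source when $\catI$ is weakly contractible. An alternative route is \HTT{}{4.4.4.9}, which identifies colimits of constant diagrams with $|\catI|$-tensors.
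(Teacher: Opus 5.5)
Your proposal is correct and follows the paper's route: the paper proves (1) by writing the mapping anima between constant diagrams as a limit over $\Tw(\catI)$ of a constant diagram and using that $\Tw(\catI)\to\catI$ is a weak homotopy equivalence (precisely the careful version you sketch at the end), and then obtains (2) by ``unwinding the definitions'' from (1) and the adjunction $\colimit_{\catI}\dashv\underline{(\blank)}$. Your unwinding of (2b) is sound, with one small point: universality is normally stated only for base change along maps into $\colimit F$ itself, so the direction ``universal $\Rightarrow$ locally cartesian'' for an arbitrary $\colimit F\to d$ is not literally the definition but needs the (immediate) pasting $d'\times_d F(i)\simeq (d'\times_d\colimit F)\times_{\colimit F}F(i)$.
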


\begin{proof}
  Write $\Tw(\catI)$ for the twisted arrow $\infty$-category of $\catI$ as in \cite[\kerodontag{03JG}, \kerodontag{03JR}]{kerodon} and $\lambda \from \Tw(\catI) \to \catI^{\op} \times \catI$ for the natural projection \kerodon{03JK}.
  Given two diagrams $F, G \from \catI \to \catC$, the mapping anima $\Map(F, G)$ is computed as the limit of the diagram
  \[
    \begin{tikzcd}
      \Tw(\catI) \arrow[r, "{\lambda}"] & \catI^{\op} \times \catI \arrow[r, "{F^{\op} \times G}"] & \catC^{\op} \times \catC \arrow[r, "{\Map_{\catC}}"] & \Ani
    \end{tikzcd}
  \]
  by \cite[Prop. 5.1]{gepner2020}.
  Since the projection $\Tw(\catI) \to \catI^{\op} \times \catI \to \catI$ is a weak homotopy equivalence by \kerodon{048L} and $\catI$ is assumed to be weakly contractible, we conclude by applying \cite[\kerodontag{02XU}, \kerodontag{02N5}]{kerodon} to $\Tw(\catI) \to \ast$.
  This shows~\ref{propitem:constant-functor-ff}.
  Unwinding the definitions, we see that~\ref{propitem:colimit-reflective-localisation} is an immediate consequence of~\ref{propitem:constant-functor-ff}, since $\colimit_{\catI}$ is left adjoint to $\underline{(\blank)}$.
\end{proof}

Following \HTT{}{6.1.2.2}, for an infinity category $\catC$, we call the category $\Fun(\simplex^{\op}, \catC)$ the infinity category of \emph{simplicial objects} of $\catC$ and denote it by $s\catC$.

  \begin{corollary}
    \label{cor:geometric-realisation-is-locally-cartesian-in-profinite-anima}
    The geometric realisation functor
    \[
        \real = \colimit_{\simplex^{\!\op}} \from \simpl{\pfAni} = \Fun(\simplex^{\op}, \pfAni) \to \pfAni
    \]
    is a locally cartesian reflective localisation.
  \end{corollary}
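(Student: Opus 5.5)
By \Cref{prop:weakly-contractible-implies-fully-faithful-into-functor-category}, applied with $\catI = \simplex^{\op}$ (which is weakly contractible) and $\catC = \pfAni$, the statement reduces to two points. First, $\pfAni$ admits geometric realisations; I would cite that $\pfAni$ is presentable, see \cite[\SAGsubsec{E.2}]{SAG}. Then part \ref{propsubitem:reflective} of that proposition gives that $\real$ is a reflective localisation with fully faithful right adjoint the constant functor. Second, by part \ref{propsubitem:locally-cartesian}, it remains to show that geometric realisations are universal in $\pfAni$. Concretely, for a map $B' \to B$ of profinite anima and a simplicial object $E_\bullet$ over $B$, the canonical map
\[
\real[E_\bullet \times_B B'] \longrightarrow \real[E_\bullet] \times_B B'
\]
should be an equivalence. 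This is the whole content, and I expect it to be the main obstacle. Universality of colimits fails in general in $\pfAni$ (already for infinite coproducts), so the argument must use something specific to $\simplex^{\op}$-shaped colimits, or to finite data.

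My first attempt is to reduce universality to fibres. By \Cref{lem:criterion-homotopy-pullback-fibres}, it suffices to check the equivalence after taking fibres over points $b'$ of $B'$. Both sides have the same fibre over $b'$ as over its image $b \in B$. So the claim becomes that $\fib_b \from \pfAni[B] \to \pfAni$ commutes with geometric realisation. Here the realisation in $\pfAni[B]$ is computed in $\pfAni$, since the forgetful functor creates colimits, as used in the proof of \Cref{lem:epimorphisms-in-over-category}.

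Next I would reduce to $B$ being $\pi$-finite. Write $B = \prolimit_\alpha B_\alpha$ with $B_\alpha \in \Ani_\pi$; this presentation is available by \Cref{rec:pro-categories}. The aim is to identify $\pfAni[B]$ with the cofiltered limit of the categories $\pfAni[B_\alpha]$ along pullback. Then the fibre over $b$ becomes the limit of the fibres over the images $b_\alpha$. Making this compatible with geometric realisation, which is a colimit, is the delicate point. I would try to check the comparison map on homotopy groups via the profinite Whitehead theorem \ref{thm:profinite-whitehead-theorem}, using that profinite homotopy groups are computed levelwise and commute with the relevant cofiltered limits.

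For $B \in \Ani_\pi$, I would use the equivalences
\[
\pfAni[B] \simeq \Pro(\overcat{(\Ani_\pi)}{B}) \simeq \Pro\big(\Fun(B, \Ani_\pi)\big).
\]
Under these, $\fib_b$ becomes the pro-extension of evaluation at $b$. Evaluation $\ev_b \from \Fun(B,\Ani_\pi) \to \Ani_\pi$ has a left adjoint, namely left Kan extension along $b \colon \point \to B$. This adjoint preserves $\pi$-finiteness because $B$ is $\pi$-finite: its values are path spaces of $B$. The adjunction extends to pro-categories. Hence $\fib_b$ has a left adjoint on pro-objects and preserves limits. That alone is the wrong direction for preserving colimits. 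So I would also use the right Kan extension along $b$, which is given by products over the finite sets $\pi_0$ of path spaces, and is likewise $\pi$-finite. Then $\ev_b$, and hence $\fib_b$, is a left adjoint on pro-categories and preserves all colimits, in particular geometric realisations.
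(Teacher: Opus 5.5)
Your first step matches the paper: via \Cref{prop:weakly-contractible-implies-fully-faithful-into-functor-category}, everything reduces to universality of geometric realisations in $\pfAni$, which the paper simply quotes from \SAG{}{E.6.3.2}. Your attempt to prove that universality directly has a genuine gap at the reduction from a profinite base $B=\prolimit_\alpha B_\alpha$ to the $\pi$-finite $B_\alpha$.

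The identification of $\pfAni[B]$ with the limit of the $\pfAni[B_\alpha]$ along pullback is false. Objects of that limit correspond to objects over $B$ pulled back from some finite stage, and the inclusion of a non-isolated point of a profinite set $B$ is not of this form. More seriously, your $\pi$-finite argument shows that $\fib_b$ preserves \emph{all} colimits, and nothing in your sketch uses the shape $\simplex^{\op}$. If the reduction went through as outlined, it would also prove universality of infinite coproducts, which fails. In $\pfAni$ one has $\coprod_{\bN}\point=\beta\bN$; the fibre of the identity over a non-principal ultrafilter is a point, while each $\point\to\beta\bN$ has empty fibre there. So the special feature of geometric realisations must enter exactly at this step. ``Checking on homotopy groups via Whitehead'' gives no mechanism, since homotopy groups of a realisation are not determined by those of its terms.

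The missing lemma is that geometric realisations commute with cofiltered limits in $\pfAni$. For $n$-truncated $X\in\Ani_\pi$ one has $\map(\real[E_\bullet],X)\simeq\lim_{\simplex}\map(E_\bullet,X)\simeq\lim_{\simplex_{\leq n+1}}\map(E_\bullet,X)$, because all terms are $n$-truncated. This is a \emph{finite} limit, so it commutes with the filtered colimit $\map(\lim_\alpha E^\alpha_\bullet,X)\simeq\colimit_\alpha\map(E^\alpha_\bullet,X)$ coming from cocompactness of $X$. For coproducts one gets an infinite product instead, which is exactly where that case breaks. With this lemma, write $\fib_b(E)\simeq\lim_\alpha\fib_{b_\alpha}(E\to B\to B_\alpha)$, using fibres of the composites rather than pullbacks. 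Your $\pi$-finite argument then yields $\fib_b\real[E_\bullet]\simeq\lim_\alpha\real[\fib_{b_\alpha}E_\bullet]\simeq\real[\fib_b E_\bullet]$.

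Two smaller corrections:
\begin{enumerate}
\item $\pfAni$ is not presentable: its $0$-truncated part is the category of profinite sets, the opposite of Boolean algebras. Rather, $\pfAni^{\op}\simeq\Fun^{\lex}(\Ani_\pi,\Ani)$ is presentable, and that is what gives all small colimits in $\pfAni$.
\item The right Kan extension along $b$ is $c\mapsto\map(P_{c,b},X)$, with $P_{c,b}$ the anima of paths between $c$ and $b$. This is a product over $\pi_0(P_{c,b})$ only if $B$ is $1$-truncated. It is still $\pi$-finite, so that step survives.
\end{enumerate}
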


  \begin{proof}
    The $\infty$-category $\pfAni$ admits all colimits and, by \SAG{}{E.6.3.2}, geometric realisations (i.e., $\simplex^{\op}$-shaped colimits) are universal.
  \end{proof}

\begin{proposition}
    \label{prop:geometric-realisation-of-pullbacks}
    Let $\catC$ be an $\infty$-category with fibre products and universal geometric realisations, $s \in \catC$ an object and $r \from s_{\bullet} \to \underline{s}$ in $\simpl{\catC}$ a simplicial resolution of $s$.
    Let $f \from x \to s$ be any map in $\catC$ and consider the fibre product
    \[
        \begin{tikzcd}
            x_{\bullet} \arrow[r, "{f_{\! \bullet}}"] \arrow[d, "{\pr_{\underline{x}}}"']
                \arrow[dr, phantom, very near start, "{ \lrcorner }"]
              & s_{\bullet} \arrow[d] \\
            \underline{x} \arrow[r, "{\underline{f}}"']
              & \underline{s}
        \end{tikzcd}
    \]
    in $\simpl{\catC}$.
    Then the following holds.
    \begin{thmlist}
      \item \label{propitem:resolution} The map $x_{\bullet} \to \underline{x}$ is a simplicial resolution of $x$.
      \item \label{propitem:homotopic} The geometric realisation $\real[f_{\! \bullet}]$ of $f_{\!\bullet}$ is homotopic to $f$.
      \item \label{propitem:fully-faithful}
      The following composite map is fully faithful:
      \[
      \begin{tikzcd}
        \overcat{\catC}{s} \arrow[r, "{\underline{(\blank)}}"] & \overcat{\simpl{\catC}}{\underline{s}} \arrow[r, "{\blank \times_{\underline{s}} s_{\bullet}}"] & \overcat{\simpl{\catC}}{s_{\bullet}}.
      \end{tikzcd}
      \]
    \end{thmlist}
  \end{proposition}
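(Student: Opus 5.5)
We prove the three assertions in order, using only that geometric realisations in $\catC$ are universal and that $\real \from \simpl{\catC} \to \catC$ is a reflective localisation with fully faithful right adjoint the constant-diagram functor $\underline{(\blank)}$, by \Cref{prop:weakly-contractible-implies-fully-faithful-into-functor-category}. By a simplicial resolution $r \from s_\bullet \to \underline{s}$ we understand a map whose geometric realisation $\real[s_\bullet] \to s$ is an equivalence.

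For \ref{propitem:resolution}, fibre products in $\simpl{\catC}$ are computed levelwise, so $x_k = x \times_s s_k$. Universality of geometric realisations gives
\[
\real[x_\bullet] = \colimit_{\simplex^{\op}} (x \times_s s_\bullet) \simeq x \times_s \colimit_{\simplex^{\op}} s_\bullet \simeq x \times_s s \simeq x .
\]
In other words, $\real$ is locally cartesian, by \Cref{prop:weakly-contractible-implies-fully-faithful-into-functor-category}~\ref{propsubitem:locally-cartesian}. The equivalence is induced by $\pr_{\underline{x}}$, so $x_\bullet \to \underline{x}$ is a resolution.

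For \ref{propitem:homotopic}, apply $\real$ to the commutative square. Since $\real \circ \underline{(\blank)} \simeq \id$ (the counit is an equivalence because the constant functor is fully faithful), we get a commutative square with $\real[f_\bullet]$ on top, $f$ on the bottom, and vertical maps $\real[x_\bullet] \isomto x$ and $\real[s_\bullet] \isomto s$. These vertical maps are equivalences by \ref{propitem:resolution} and by hypothesis. Hence $\real[f_\bullet]$ identifies with $f$.

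For \ref{propitem:fully-faithful}, take $x, y \in \overcat{\catC}{s}$. The first functor $\overcat{\catC}{s} \to \overcat{\simpl{\catC}}{\underline{s}}$ is fully faithful, since mapping anima in slices are fibres of mapping anima and $\underline{(\blank)}$ is fully faithful. For the second functor, use the adjunction of base change along $s_\bullet \to \underline{s}$: the functor $\blank \times_{\underline s} s_\bullet$ is right adjoint to the composition functor $\overcat{\simpl{\catC}}{s_\bullet} \to \overcat{\simpl{\catC}}{\underline{s}}$. This gives
\[
\map_{s_\bullet}(x_\bullet, y_\bullet) \simeq \map_{\underline{s}}(x_\bullet, \underline{y}).
\]
Next use the adjunction $\real \dashv \underline{(\blank)}$, in its slice form over $\underline{s}$ (valid since $\real[\underline{s}] \simeq s$). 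This yields
\[
\map_{\underline s}(x_\bullet, \underline y) \simeq \map_s(\real[x_\bullet], y) \simeq \map_s(x, y),
\]
where the last step uses \ref{propitem:resolution}. It remains to check that this composite is inverse to the map induced by the functor, i.e.\ that the unit of base change composed with $\real$ recovers the identity. This reduces to the naturality of the counit $\real[x_\bullet] \to x$, which is $\real(\pr_{\underline{x}})$.

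The main obstacle is the bookkeeping in this last step: verifying that the chain of equivalences is really the map induced by the functor, and justifying the slice version of the adjunction $\real \dashv \underline{(\blank)}$. Both are formal from \HTT{}{5.2.5.1}-type statements on adjunctions between overcategories.
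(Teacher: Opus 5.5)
Your proposal is correct and follows essentially the paper's proof. Parts (1) and (2) come from universality (local cartesianness) of geometric realisation. For (3), the paper likewise passes to the composite left adjoint $\real \circ (r\circ\blank)$ and shows that its counit $\epsilon_x\circ\real[\pr_{\underline{x}}]$ is an equivalence. The bookkeeping you flag at the end is exactly what \Cref{lem:right-adjoint-faithful} packages: under the adjunction equivalence, the functor's action on mapping anima becomes precomposition with the counit. Note also that your last step $\map_s(\real[x_\bullet],y)\simeq\map_s(x,y)$ needs (2) and not only (1), since the equivalence must be one over $s$.
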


\begin{proof}
(1) By \Cref{cor:geometric-realisation-is-locally-cartesian-in-profinite-anima}, the geometric realisation functor $\real \from \simpl{\catC} \to \catC$ is a locally cartesian reflective localisation. Therefore, as claimed, we have
      \[
          \real[x_{\bullet}]
          = \real[\underline{x} \times_{\underline{s}} s_{\bullet}]
          = x \times_{s} \real[s_{\bullet}]
          = x.
      \]
(2) Again by \Cref{cor:geometric-realisation-is-locally-cartesian-in-profinite-anima}, the canonical map $c \from \real[x_{\bullet}] = \real[\underline{x} \times_{\underline{s}} s_{\bullet}] \to x \times_{s} \real[s_{\bullet}]$ is an isomorphism.
      The canonical map in question is uniquely determined by the following commutative diagram
      \[
      \begin{tikzcd}
                      \real[\underline{x} \times_{\underline{s}} s_{\bullet}] \arrow[rd, "{\simeq}", "c"'] \arrow[rrd, bend left = 15, "{\real[f_{\!\bullet}]}"] \arrow[dd, "{\real[\pr_{\underline{x}}]}"'] & & \\
                      & x \times_{s} \real[s_{\bullet}] = x \arrow[r, "f"] \arrow[d, "{\id_{x}}"'] \arrow[dr, phantom, very near start, "{ \lrcorner }"] & s = \real[s_{\bullet}] \arrow[d, "{\id_{s}}"] \\
                      \real[\underline{x}] \arrow[r, "{\epsilon_{x}}"', "{\simeq}"] & x \arrow[r, "f"'] & s,
      \end{tikzcd}
      \]
      where $\epsilon_{x} \from \real[\underline{x}] \to x$ denotes the counit equivalence of the adjunction $\real \ladj \underline{(-)}$.

(3) Both functors under consideration have left adjoints, given by
        \[
            \begin{tikzcd}
              \overcat{\simpl{\catC}}{\underline{s}} \arrow[r, "{\real}"] & \overcat{\catC}{s}
            \end{tikzcd}
            \andeq
            \begin{tikzcd}
              \overcat{\simpl{\catC}}{s_{\bullet}} \arrow[r, "{r \circ \blank}"] & \overcat{\simpl{\catC}}{\underline{s}}.
            \end{tikzcd}
        \]
        Since adjoints compose, this means that $\real[-] \circ (r \circ \blank)$ is left adjoint to the functor in question.
        By \Cref{lem:right-adjoint-faithful}, the assertion is hence equivalent to the counit $\counit$ of this adjunction to be an equivalence.
        Unraveling the definitions, we see that said counit is given componentwise by
        \[
            \counit_{y \to s} \from \real[(\underline{y} \times_{\underline{s}} s_{\bullet} \to s_{\bullet} \to \underline{s})] \to (y \to s),
        \]
        which is an equivalence by the preceding two assertions.
        This shows full faithfulness of
        \[
          \begin{tikzcd}
            \overcat{\catC}{s} \arrow[r, "{\underline{(\blank)}}"] & \overcat{\simpl{\catC}}{\underline{s}} \arrow[r, "{\blank \times_{\underline{s}} s_{\bullet}}"] & \overcat{\simpl{\catC}}{s_{\bullet}} .
          \end{tikzcd}
          \qedhere
        \]
\end{proof}

%%% Local Variables:
%%% mode: LaTeX
%%% TeX-master: "../haupt"
%%% End: